\documentclass[10pt,a4paper]{article}

\usepackage[T1]{fontenc}% for polish hook
\usepackage[a4paper,margin=2.35cm]{geometry}
\usepackage{microtype}

\usepackage{epsf,epsfig,amsfonts,amsgen,amsmath,amstext,amsbsy,amsopn,amsthm,cases,listings}
\usepackage{ebezier,eepic}
\usepackage{xcolor}
\usepackage{multirow}
\usepackage{epstopdf}
\usepackage{graphicx}
\usepackage{pgf,tikz}
\usepackage{pgfplots}
\usepackage{mathrsfs}
\usepackage[bottom]{footmisc}
\usepackage{enumerate}
\usepackage{enumitem}
\usepackage[titletoc]{appendix}
\usepackage{booktabs}
\usepackage{url}
\usepackage{mathtools}
\usepackage{authblk}
\usepackage{amssymb}
\usepackage{empheq}
\usepackage{dsfont}
\usepackage{longtable}
\usepackage{float}

\usepackage{subfigure}
\pgfplotsset{compat=1.18}
\usetikzlibrary{arrows}
\usepackage{aligned-overset}
\usepackage{bm}%for vector
\usepackage{bbm}%for vector
\usepackage{titlesec}
\usepackage[
    backref=page,
    hyperfootnotes=false,
    colorlinks=true,
    linkcolor=blue!45!black,
    citecolor=blue!45!black,
    urlcolor=blue!45!black
]{hyperref}
\usepackage{aliascnt}
\usepackage[nameinlink,capitalise,noabbrev]{cleveref}
\usepackage{algorithm}
\usepackage{algpseudocode}
\usepackage{multicol}

\usepackage[thinc]{esdiff}

\allowdisplaybreaks[1]
\setlist{itemsep=0.15em,topsep=0.25em,parsep=0pt}
\definecolor{uuuuuu}{rgb}{0.27,0.27,0.27}
\definecolor{sqsqsq}{rgb}{0.1255,0.1255,0.1255}

\theoremstyle{plain}

\newaliascnt{theorem}{definition}
\newtheorem{theorem}[theorem]{Theorem}
\aliascntresetthe{theorem}
\newaliascnt{lemma}{definition}
\newtheorem{lemma}[lemma]{Lemma}
\aliascntresetthe{lemma}
\newaliascnt{proposition}{definition}
\newtheorem{proposition}[proposition]{Proposition}
\aliascntresetthe{proposition}
\newaliascnt{corollary}{definition}

\aliascntresetthe{corollary}
\newaliascnt{conjecture}{definition}

\aliascntresetthe{conjecture}
\newaliascnt{claim}{definition}
\newtheorem{claim}[claim]{Claim}
\aliascntresetthe{claim}
\theoremstyle{definition}
\newaliascnt{problem}{definition}

\aliascntresetthe{problem}
\newaliascnt{remark}{definition}

\aliascntresetthe{remark}
\newaliascnt{observation}{definition}

\aliascntresetthe{observation}
\newaliascnt{fact}{definition}
\newtheorem{fact}[fact]{Fact}
\aliascntresetthe{fact}
\newaliascnt{example}{definition}

\aliascntresetthe{example}
\crefname{definition}{Definition}{Definitions}
\crefname{theorem}{Theorem}{Theorems}
\crefname{lemma}{Lemma}{Lemmas}
\crefname{proposition}{Proposition}{Propositions}
\crefname{corollary}{Corollary}{Corollaries}
\crefname{conjecture}{Conjecture}{Conjectures}
\crefname{claim}{Claim}{Claims}
\crefname{problem}{Problem}{Problems}
\crefname{remark}{Remark}{Remarks}
\crefname{observation}{Observation}{Observations}
\crefname{fact}{Fact}{Facts}
\crefname{example}{Example}{Examples}
\crefname{enumi}{part}{parts}

\titleformat{\section}{\normalfont\large\bfseries}{\thesection}{0.75em}{}
\titleformat{\subsection}{\normalfont\normalsize\bfseries}{\thesubsection}{0.75em}{}
\titleformat{\subsubsection}{\normalfont\normalsize\itshape}{\thesubsubsection}{0.75em}{}
\titlespacing*{\section}{0pt}{0.95\baselineskip}{0.45\baselineskip}
\titlespacing*{\subsection}{0pt}{0.75\baselineskip}{0.3\baselineskip}
\titlespacing*{\subsubsection}{0pt}{0.6\baselineskip}{0.25\baselineskip}

\makeatletter
\let\oldthebibliography\thebibliography
\renewcommand{\thebibliography}[1]{%
  \oldthebibliography{#1}%
  \setlength{\itemsep}{0.15em}%
  \setlength{\parskip}{0pt}%
}
\newcommand{\plainfootnotetext}[1]{%
  \begingroup
  \renewcommand\thefootnote{}%
  \long\def\@makefntext##1{\noindent ##1}%
  \footnotetext{#1}%
  \endgroup
}
\makeatother
\newcommand{\ex}{\mathrm{ex}}

\newcommand{\ind}{\mathrm{ind}}

\usepackage{tikz}
\usetikzlibrary{calc}

\tikzset{
  linkedge/.style={
    draw=black!65,
    fill=black!18,
    line width=0.45pt,
    fill opacity=0.55
  },
  maj/.style={
    circle, draw=black, fill=white,
    minimum size=17pt, inner sep=0pt, font=\small
  },
  min/.style={
    circle, draw=black, fill=black, text=white,
    minimum size=17pt, inner sep=0pt, font=\small
  }
}

\newcommand{\FivePos}{%
  \coordinate (1) at (90:1.45);
  \coordinate (2) at (162:1.45);
  \coordinate (3) at (234:1.45);
  \coordinate (4) at (306:1.45);
  \coordinate (5) at (18:1.45);
}

\newcommand{\DrawForbiddenConfig}[3]{%
\begin{tikzpicture}[scale=0.95,baseline=(current bounding box.center)]
  \FivePos
  #1
  \foreach \i in {1,...,5}{
    \def\thisstyle{maj}
    \foreach \j in {#2}{
      \ifnum\i=\j\relax
        \xdef\thisstyle{min}
      \fi
    }
    \node[\thisstyle] at (\i) {$\i$};
  }
  \node[font=\scriptsize] at (0,-1.95) {#3};
\end{tikzpicture}%
}
\begin{document}
\title{\bf\Large Tur\'{a}n numbers of $4$-uniform tight even cycles minus one edge}
\date{\today}
\author[1]{Wanfang~Chen}
\author[2]{Jianfeng~Hou}
\author[1]{Xizhi~Liu}
\author[2]{Yixiao~Zhang}
\author[2]{Hongbin~Zhao}
\affil[1]{\small School of Mathematical Sciences, University of Science and Technology of China, Hefei, China}
\affil[2]{\small Center for Discrete Mathematics, Fuzhou University, Fuzhou, China}

\maketitle 
%%%%%%%%%%%%%%%%%%%%%%%%%%%%%%%%%%%%%%%%%%%%%
\begin{abstract}
    For every integer $k \ge 1$ and sufficiently large $n$, we show that the extremal construction for the Tur\'{a}n number of the $4$-uniform tight cycle of length $4k+2$ minus one edge is a complete odd-bipartite $4$-graph. 
    In particular, since $C_{6}^{4-}$ contains the $4$-uniform expanded triangle as a subgraph, our result extends that of Frankl and Keevash--Sudakov on the Tur\'an density and the Tur\'{a}n number of the $4$-uniform expanded triangle.
    We also show that the Tur\'{a}n density of $C_{4k+2}^{4}$ is $1/2$ for all integers $k \ge 2$, and establish the corresponding stability result. This strengthens the result of Sankar on the Tur\'{a}n density of $C_{4k+2}^{4}$ which holds only for all sufficiently large $k$.
\end{abstract}

%%%%%%%%%%%%%%%%%%%%%%%%%%%%%%%%%%%%%%%%%%%%%
\plainfootnotetext{
%   \textit{Keywords:} Hypergraph, Tur\'{a}n problem, Tight Cycle \\
% \textit{MSC2020:} 05C35, 05C65, 05D05 \\
\textit{Email:} \texttt{a372959313@gmail.com}, \texttt{jfhou@fzu.edu.cn}, \texttt{liuxizhi@ustc.edu.cn}, \texttt{fzuzyx@gmail.com}, \texttt{hbzhao2024@163.com}}
%%%%%%%%%%%%%%%%%%%%%%%%%%%%%%%%%%%%%%%%%%%%%

\section{Introduction}\label{SEC:Introduction}

Given an integer $r\ge 2$, an \emph{$r$-uniform hypergraph} (henceforth an \emph{$r$-graph}) $\mathcal{H}$ is a collection of $r$-subsets of some set $V$. We call $V$ the \emph{vertex set} of $\mathcal{H}$ and denote it by $V(\mathcal{H})$. When $V$ is understood, we usually identify a hypergraph $\mathcal{H}$ with its set of edges. Thus, $|\mathcal{H}|$ represents the number of edges in $\mathcal{H}$.

Given a family $\mathcal{F}$ of $r$-graphs, we say an $r$-graph $\mathcal{H}$ is \emph{$\mathcal{F}$-free}
if it does not contain any member of $\mathcal{F}$ as a subgraph.
The \emph{Tur\'{a}n number} $\mathrm{ex}(n, \mathcal{F})$ of $\mathcal{F}$ is the maximum number of edges in an $\mathcal{F}$-free $r$-graph on $n$ vertices. 
The \emph{Tur\'{a}n density} of $\mathcal{F}$ is defined as $\pi(\mathcal{F})\coloneqq \lim_{n\to\infty}\mathrm{ex}(n,\mathcal{F})/\binom{n}{r}$. 
The existence of this limit follows from a simple averaging argument of Katona--Nemetz--Simonovits~\cite{KNS64}, which shows that $\mathrm{ex}(n,\mathcal{F})/\binom{n}{r}$ is non-increasing in $n$.

For $r=2$, the value $\pi(\mathcal{F})$ is well understood thanks to the classical work of Erd\H{o}s--Stone~\cite{ES46} (see also~\cite{ES66}), which extends Mantel's theorem~\cite{Man07} and Tur\'{a}n's seminal theorem~\cite{Tur41} on complete graphs.
For $r \ge 3$, the picture is less clear: the notoriously difficult conjecture of Tur\'{a}n on $K_4^3$~\cite{Tur41}, the complete $3$-graph on four vertices, remains wide open.
We refer to~\cite{dC94,Fur91,Sid95,Kee11} for surveys and general background. 
Significant effort has been devoted to the case $r=3$, where a number of important exact and asymptotic results are known; see, for example,~\cite{Kee05,LZ09,Raz07,Raz10,BT11,LMP24,BLLP24}.
However, as the uniformity $r$ increases, exact density and stability results become comparatively scarce, partly due to the increasing complexity of the extremal configurations.
Among the few general-uniformity results, Mubayi~\cite{Mubayi06} provided the first infinite family of Tur\'{a}n densities for every $r$, while Pikhurko~\cite{Pik24} determined the asymptotic order of $1-\pi(K_{r+1}^r)$, where $K_{r+1}^r$ denotes the complete $r$-graph on $r+1$ vertices.
For further results on Tur\'{a}n problems in higher uniformities, we refer the reader, for example, to~\cite{dC83,FR85,Kee05,LZ09,Mar09,Sid89cont,Sid97,Pik13,MV16,LP25,CY24,Liu25,HLZZZ25}.

Tur\'{a}n problems for tight cycles and their variants have received considerable attention in recent years.   
Given an integer $\ell > r \ge 3$, the \emph{$r$-uniform tight cycle of length $\ell$}, denoted by $C_{\ell}^{r}$, is an $r$-graph with vertex set $\left\{v_1,\ldots,v_{\ell}\right\}$ and edge set $\left\{v_i v_{i+1} \cdots v_{i+r-1}\colon i\in \mathbb{Z}/ \ell \mathbb{Z}\right\}$. 
Let $C_{\ell}^{r-}$ denote the $r$-graph obtained by removing an edge from $C_{\ell}^{r}$. 
When $r \mid \ell$, $C_{\ell}^{r}$ and $C_{\ell}^{r-}$ are $r$-partite, so their Tur\'{a}n densities are all zero by a theorem of Erd\H{o}s~\cite{Erdos67a}. 
For $r=3$, Mubayi and R\"{o}dl conjectured that $\pi(C_{5}^{3})=2\sqrt{3}-3$~\cite{MPS11,MR02}.
Very recently, Kam\v{c}ev, Letzter, and Pokrovskiy determined $\pi(C_{\ell}^{3})=2\sqrt{3}-3$ for all sufficiently large $\ell$ not divisible by $3$~\cite{KLP24}; this was later improved to all $\ell \ge 7$ not divisible by $3$~\cite{BLLP24}. 
For $3$-uniform tight cycles with one edge removed, the density of $C_{\ell}^{3-}$ was determined for every $\ell \ge 5$ not divisible by $3$ independently in~\cite{LMP24} and~\cite{BLLP24}, improving on the large-$\ell$ result of Balogh--Luo~\cite{BL24}.
Sankar~\cite{San26} extended the method of Kam\v{c}ev, Letzter, and Pokrovskiy to all uniformities $r$ in a sophisticated way and, in particular, proved that $\pi(C_{\ell}^{4})=1/2$ for all sufficiently large $\ell$ not divisible by $4$. 

We now make the result of Sankar more precise by introducing the complete odd-bipartite hypergraph.
Given a partition $[n]=V_1\sqcup V_2$, let the \emph{complete odd-bipartite} $2r$-graph $\mathbb{B}_{2r}^{\mathrm{odd}}(V_1,V_2)$ be the $2r$-graph on $[n]$ whose edge set consists of all $2r$-sets that meet $V_1$ in an odd number of vertices.  When only the part sizes matter, we also write $\mathbb{B}_{2r}^{\mathrm{odd}}(|V_1|,|V_2|)$.  Define
\begin{align*}
b_{2r}(n) \coloneqq \max_{|V_1|+|V_2|=n}\left|\mathbb{B}_{2r}^{\mathrm{odd}}(|V_1|,|V_2|)\right|. 
\end{align*}
When the part sizes attain this maximum, we write $\mathbb{B}_{2r}^{\mathrm{odd}}(n)$ for the corresponding extremal complete odd-bipartite $2r$-graph, up to isomorphism.\footnote{There may be more than one maximizing choice of the part sizes.  In that case, this notation denotes any one of the corresponding extremal complete odd-bipartite $2r$-graphs; all such choices have the same number of edges.  For the $4$-uniform case, this possible non-uniqueness is made explicit after \cref{THM:main_C_4k+2minus}.}  A straightforward calculation shows that $b_{2r}(n) \sim \frac{1}{2}\binom{n}{2r}$ as $n \to \infty$. 

Since this work concerns only $4$-graphs, we write $b(n)$ for $b_4(n)$; equivalently, $b(n)\coloneqq|\mathbb{B}_4^{\mathrm{odd}}(n)|$. 
By~\cite[Proposition~3.15]{San26}, the complete odd-bipartite $4$-graph is $C_{\ell}^{4}$-free whenever $4 \nmid \ell$. 
Consequently, $\mathrm{ex}(n,C_{\ell}^4) \ge b(n)$ for every $\ell$ not divisible by $4$. 
Sankar~\cite{San26} proved that this lower-bound construction is asymptotically sharp and established the corresponding stability result for all sufficiently long tight cycles whose lengths are not divisible by four.

%%%%%%%%%%%%%%%%%
\begin{figure}[H]
\centering
\tikzset{every picture/.style={line width=1pt}} %set default line width to 0.75pt        

\begin{tikzpicture}[x=0.65pt,y=0.65pt,yscale=-1,xscale=1,line join=round]
%uncomment if require: \path (0,300); %set diagram left start at 0, and has height of 300

\definecolor{cycleblue}{RGB}{78,121,167}
\definecolor{cycleorange}{RGB}{242,142,43}
\definecolor{cyclegreen}{RGB}{89,161,79}
\definecolor{cyclepurple}{RGB}{176,122,161}
\definecolor{cyclered}{RGB}{225,87,89}
\tikzset{cycleedge/.style={draw=uuuuuu, draw opacity=0.82, fill opacity=0.38}}

\begin{scope}[shift={(175,0)}]
%Shape: Ellipse [id:dp44524455665673157] 
\draw   (507.85,205.65) .. controls (486.76,205.66) and (469.64,169.64) .. (469.62,125.19) .. controls (469.6,80.75) and (486.68,44.71) .. (507.77,44.7) .. controls (528.86,44.69) and (545.98,80.71) .. (546,125.15) .. controls (546.02,169.6) and (528.94,205.64) .. (507.85,205.65) -- cycle ;
%Shape: Ellipse [id:dp6579161350943095] 
\draw   (402.85,207.65) .. controls (381.76,207.66) and (364.64,171.64) .. (364.62,127.19) .. controls (364.6,82.75) and (381.68,46.71) .. (402.77,46.7) .. controls (423.86,46.69) and (440.98,82.71) .. (441,127.15) .. controls (441.02,171.6) and (423.94,207.64) .. (402.85,207.65) -- cycle ;
%Curve Lines [id:da37694450853202] 
\draw  [fill=uuuuuu, fill opacity=0.5]  (407.85,76.14) .. controls (424.42,82.69) and (435.22,91.97) .. (411.45,102.33) .. controls (432.34,99.6) and (426.32,109.74) .. (415,122) .. controls (443.09,99.63) and (479.31,99) .. (511,93) .. controls (489.4,94.09) and (435.22,80.51) .. (407.85,76.14) --cycle;
%Curve Lines [id:da5288264748438674] 
\draw  [fill=uuuuuu, fill opacity=0.5]  (501,185) .. controls (482.36,178.05) and (479.1,172.04) .. (505.43,160.45) .. controls (482.16,163.67) and (491.6,148.51) .. (504,135)  .. controls (474.51,149.61) and (442.47,162.24) .. (406,164)
 .. controls   (422.94,163.37) and (449,168) .. (462,171)  .. controls (475,174) and (487.72,180.88) .. (501,185) --cycle;
\end{scope}

\begin{scope}[line cap=round,line join=round]
\coordinate (t1) at (180,56);
\coordinate (t2) at (241,91);
\coordinate (t3) at (241,161);
\coordinate (t4) at (180,196);
\coordinate (t5) at (119,161);
\coordinate (t6) at (119,91);
\draw[cycleedge, fill=cycleblue]
  (t1)--(t2)--(t3)--(t4)
  .. controls (215,160) and (215,92) .. (t1)--cycle;
\draw[cycleedge, fill=cycleorange]
  (t1)--(t2)
  .. controls (213,82) and (149,82) .. (t5)
  --(t6)--cycle;
\draw[cycleedge, fill=cyclegreen]
  (t3)--(t4)--(t5)--(t6)
  .. controls (148,148) and (202,170) .. (t3)--cycle;
\foreach \pt in {t1,t2,t3,t4,t5,t6}{\draw[fill=uuuuuu] (\pt) circle (2pt);}
\draw (171,36) node [anchor=north west][inner sep=0.75pt]   [align=left] {$1$};
\draw (244,74) node [anchor=north west][inner sep=0.75pt]   [align=left] {$2$};
\draw (244,158) node [anchor=north west][inner sep=0.75pt]   [align=left] {$3$};
\draw (177,201) node [anchor=north west][inner sep=0.75pt]   [align=left] {$4$};
\draw (103,158) node [anchor=north west][inner sep=0.75pt]   [align=left] {$5$};
\draw (103,74) node [anchor=north west][inner sep=0.75pt]   [align=left] {$6$};
\end{scope}

\begin{scope}[shift={(205,0)},line cap=round,line join=round]
\coordinate (c1) at (180,56);
\coordinate (c2) at (241,91);
\coordinate (c3) at (241,161);
\coordinate (c4) at (180,196);
\coordinate (c5) at (119,161);
\coordinate (c6) at (119,91);
\draw[cycleedge, fill=cycleblue]
  (c1)--(c2)--(c3)--(c4)
  .. controls (215,160) and (215,92) .. (c1)--cycle;
\draw[cycleedge, fill=cycleorange]
  (c2)--(c3)--(c4)--(c5)
  .. controls (158,170) and (212,148) .. (c2)--cycle;
\draw[cycleedge, fill=cyclegreen]
  (c3)--(c4)--(c5)--(c6)
  .. controls (148,148) and (202,170) .. (c3)--cycle;
\draw[cycleedge, fill=cyclepurple]
  (c1) .. controls (145,92) and (145,160) .. (c4)
  --(c5)--(c6)--cycle;
\draw[cycleedge, fill=cyclered]
  (c1)--(c2)
  .. controls (213,82) and (149,82) .. (c5)
  --(c6)--cycle;
\foreach \pt in {c1,c2,c3,c4,c5,c6}{\draw[fill=uuuuuu] (\pt) circle (2pt);}
\draw (171,36) node [anchor=north west][inner sep=0.75pt]   [align=left] {$1$};
\draw (244,74) node [anchor=north west][inner sep=0.75pt]   [align=left] {$2$};
\draw (244,158) node [anchor=north west][inner sep=0.75pt]   [align=left] {$3$};
\draw (177,201) node [anchor=north west][inner sep=0.75pt]   [align=left] {$4$};
\draw (103,158) node [anchor=north west][inner sep=0.75pt]   [align=left] {$5$};
\draw (103,74) node [anchor=north west][inner sep=0.75pt]   [align=left] {$6$};
\end{scope}
%%%%%%%%%%%%%%%%%%% --  --  
\begin{scope}[shift={(175,0)}]
\draw [fill=uuuuuu]  (407.85,76.14) circle (1.2pt);
\draw [fill=uuuuuu] (411.45,102.33) circle (1.2pt);
\draw [fill=uuuuuu] (415,122) circle (1.2pt);
\draw [fill=uuuuuu]  (511,93) circle (1.2pt);
\draw [fill=uuuuuu] (501,185) circle (1.2pt);
\draw [fill=uuuuuu] (505.43,160.45) circle (1.2pt);
\draw [fill=uuuuuu] (504,135) circle (1.2pt);
\draw [fill=uuuuuu] (406,164) circle (1.2pt);
%%%%%%%%%%%%%%%%%%%
% Text Node
\draw (390,213) node [anchor=north west][inner sep=0.75pt]   [align=left] {$V_1$};
% Text Node
\draw (498,213) node [anchor=north west][inner sep=0.75pt]   [align=left] {$V_2$};
\end{scope}

\end{tikzpicture}

\caption{From left to right: the $4$-uniform expanded triangle $\mathcal{C}_{3}^{4}$, the $4$-uniform tight cycle of length $6$ minus one edge $C_6^{4-}$, and the complete odd-bipartite $4$-graph $\mathbb{B}_{4}^{\mathrm{odd}}(n)$.}
\label{fig:expandedK3}
\end{figure}
%%%%%%%%%%%%%%%%%%%

Our first main theorem gives the density and stability statement for the tight cycles in this congruence class.
The extremal model is again the complete odd-bipartite construction.

\begin{theorem}\label{THM:main_C_4k+2}
Let $k \ge 2$ be an integer.  Then $\pi(C_{4k+2}^{4}) = 1/2$. 
Moreover, for every $\delta >0$, there exist $\varepsilon > 0$ and $N_0$ such that every $C_{4k+2}^{4}$-free $4$-graph $\mathcal{H}$ on $n \ge N_0$ vertices with $|\mathcal{H}| \ge \left(1/2 - \varepsilon\right) \binom{n}{4}$ can be transformed into a complete odd-bipartite $4$-graph by adding and deleting at most $\delta n^4$ edges. 
\end{theorem}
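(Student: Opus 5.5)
The lower bound $\pi(C_{4k+2}^4)\ge 1/2$ follows from \cite[Proposition~3.15]{San26}: $\mathbb{B}_4^{\mathrm{odd}}(n)$ is $C_{4k+2}^4$-free, and $b(n)\sim\frac12\binom n4$. So the work is in the upper bound and the stability statement. My plan is to get both from the corresponding statement for a smaller or more tractable forbidden configuration, via blow-ups and homomorphisms.

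\textbf{Step 1: reduce to $C_{6}^{4-}$.} The key observation is that $C_{4k+2}^4$ for $k\ge 2$ is contained in a suitable blow-up of $C_6^{4-}$. More precisely, I would look for a homomorphism $C_{4k+2}^4\to C_6^{4-}$. Failing that, I would show $C_{4k+2}^4 \subseteq (C_6^{4-})[t]$ together with a few extra edges that are forced to exist in any near-extremal host.

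The cleaner route is by supersaturation. If $\pi(\{C_6^{4-}\text{-type family}\})\le 1/2$, then any $\mathcal{H}$ with $(1/2+\varepsilon)\binom n4$ edges contains $\Omega(n^{6})$ copies of the configuration, hence a large blow-up of it. The task then reduces to exhibiting $C_{4k+2}^4$ inside blow-ups of small $4$-graphs $F_1,\dots,F_m$, all of which have density at most $1/2$.

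Concretely, I would wind the cycle $v_1\dots v_{4k+2}$ around a tight walk of length $4(k-1)$ that is $4$-partite, and close it with a $6$-vertex segment. The segment uses the five edges of a $C_6^{4-}$ copy. The $4$-partite part lives in a blow-up of a single edge, which is harmless.

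\textbf{Step 2: density and stability for the small family.} Next I would prove directly that any $\{F_i\}$-free $4$-graph has at most $(1/2+o(1))\binom n4$ edges, with stability towards $\mathbb{B}_4^{\mathrm{odd}}$. The natural tool is the link-graph and parity method used by Frankl and Keevash--Sudakov for the expanded triangle $\mathcal{C}_3^4\subseteq C_6^{4-}$. They showed that $\pi(\mathcal{C}_3^4)=1/2$ with extremal structure odd-bipartite: forbidding the expanded triangle forces the $4$-sets to satisfy a near-"odd" parity condition with respect to some bipartition.

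Since $\mathcal{C}_3^4\subseteq C_6^{4-}$, forbidding the blow-up family is weaker. So I would instead use a removal/cleaning argument:
\begin{itemize}
\item pass to a subgraph with large minimum degree;
\item show that every $6$-set spans at most the number of edges allowed in $\mathbb{B}_4^{\mathrm{odd}}$ restricted to $6$ vertices, up to few exceptions;
\item apply a local-to-global flag-algebra-free counting argument, double counting edges in $6$-sets, to get density $\le 1/2$.
\end{itemize}

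For stability, I would derive from near-equality that almost all $5$-sets induce exactly the odd-bipartite pattern. This yields a global bipartition via the Keevash--Sudakov approach, where vertex pairs are classified by link parity.

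\textbf{Step 3: transfer stability to $C_{4k+2}^4$.} If $\mathcal{H}$ is $C_{4k+2}^4$-free with $(1/2-\varepsilon)\binom n4$ edges, the hypergraph removal lemma lets me delete $o(n^4)$ edges to kill all copies of each $F_i$. Otherwise a positive density of copies gives a blow-up containing $C_{4k+2}^4$. Step 2 stability then applies to the cleaned graph, giving $\delta n^4$ closeness to a complete odd-bipartite $4$-graph.

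\textbf{Main obstacle.} The crux is Step 1 combined with Step 2: finding a family of small configurations each of which embeds $C_{4k+2}^4$ in its blow-ups, yet whose joint Turán density is still exactly $1/2$. The difficulty is the parity obstruction. $C_{4k+2}^4$ is not $4$-partite, and $4k+2\equiv 2\pmod 4$ is precisely the case where a length-$6$ closing gadget exists. I expect $k\ge2$ is needed so that there are enough vertices to realise the gadget without collisions.

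If a pure homomorphism fails, I would first try forbidding the two $6$-vertex $4$-graphs obtained from $C_6^4$ by deleting one edge or "splitting" one vertex. I would then verify via the $6$-set local count that both are excluded from $\mathbb{B}_4^{\mathrm{odd}}$ and force density $\le 1/2$.
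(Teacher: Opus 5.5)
There is a genuine gap exactly where you place the crux: you never produce a family that is forced by $C_{4k+2}^4$ through blow-ups and has density $1/2$ with odd-bipartite stability.

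Your concrete winding $1,2,3,4,5,6,(1,2,3,4)^{k-1}$ over a copy of $C_6^{4-}$ makes every window an edge except the closing window $6,1,2,3$, which is the deleted edge. So it embeds $C_{4k+2}^{4-}$ into $C_6^{4-}[k]$ (this is how the paper treats the minus-one-edge theorem), not $C_{4k+2}^{4}$. No other winding can help, because there is no homomorphism $C_{4k+2}^4\to C_6^{4-}$ at all:
\begin{itemize}
\item Label $C_6^{4-}$ so that its edges are $E_i=\{1,\dots,6\}\setminus\{i,i+1\}$, $1\le i\le 5$. Let $(a_j)_{j\in\mathbb Z/m}$ be a homomorphic image of $C_m^4$. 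Consecutive windows $W_j=\{a_j,\dots,a_{j+3}\}$ are then $E_i,E_{i'}$ with $|i-i'|\le 1$. Each step either has $a_{j+4}=a_j$, or drops $i+2$ and adds $i$, or drops $i-1$ and adds $i+1$.
\item Hence $a_{j+4}\equiv a_j\pmod 2$. For $m=4k+2$ the parity of $a_j$ therefore depends only on $j\bmod 2$; say the odd values sit at even positions.
\item The two odd elements of $E_i$ can be named $X$ ($=5$ for $i\le3$, $=3$ for $i\ge4$) and $Y$ ($=3$ for $i=1$, $=1$ for $i\ge2$). An element keeps its name while it stays in the window.
\item Since $a_{2t}$ and $a_{2t+2}$ are the two odd elements of $W_{2t}$, the names alternate along $t\in\mathbb Z/(2k+1)$. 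This is an odd cycle, a contradiction.
\end{itemize}
So $C_{4k+2}^4\not\subseteq C_6^{4-}[t]$ for every $t$, and the density and stability of $C_6^{4-}$ are of no use for the full cycle. Your fallbacks (extra edges ``forced'' in near-extremal hosts, or forbidding something built from $C_6^4$) are unspecified. Moreover, $\pi(C_6^4)=1/2$ is not known; the flag algebra bound in the paper is about $0.585$. This, not vertex collisions, is the reason for $k\ge2$.

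Step~2 also lacks a mechanism that can reach $1/2$. A per-$6$-set edge bound taken from odd-bipartite $6$-vertex graphs allows $10$ of the $15$ quadruples (attained by $Q_1$), i.e.\ density $2/3$. Any $6$-set averaging proof therefore needs a correction term with nonnegative average, like $Q(S)$ in the appendix or an SDP certificate.

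The paper's route is as follows:
\begin{itemize}
\item Reduce to $C_{10}^4$ via $C_{4k+2}^4\subseteq C_{10}^4[k-1]$, using the winding $0,\dots,9,(0,1,2,3)^{k-2}$.
\item Prove $\pi(C_{10}^4)=1/2$ with a $6$-vertex flag algebra certificate in which the homomorphic images $\mathcal J_{5,0},\mathcal J_{6,0},\dots,\mathcal J_{6,6}$ of $C_{10}^4$ are forbidden. From it, read off that only $Q_0,\dots,Q_5$ have positive density in near-extremal graphs.
\item Two of these, $Q_4$ and $Q_5$, are not odd-bipartite: each contains a $5$-set with three edges. So your ``almost all $5$-sets are odd-bipartite'' step is not automatic. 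The paper removes them by showing each edge extends to at most one induced $T_3$.
\item Conclude with induced removal and a local-to-global parity lemma: if every $6$-set is odd-bipartite, the whole $4$-graph is odd-bipartite.
\end{itemize}
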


The same construction also avoids $C_{\ell}^{4-}$ whenever $\ell \equiv 2 \pmod{4}$; see \cref{lem:odd-bip-Fk-free}.
Together with Sankar's result~\cite{San26}, this gives $\pi(C_{4k+2}^{4-})=1/2$ for all sufficiently large $k$.
Our second main theorem upgrades this asymptotic statement to an exact Tur\'{a}n theorem for every $k\ge 1$ and all sufficiently large $n$.

\begin{theorem}\label{THM:main_C_4k+2minus}
For every integer $k \ge 1$ and sufficiently large $n$, we have $\mathrm{ex}(n,C_{4k+2}^{4-}) = b(n)$, and every $C_{4k+2}^{4-}$-free $4$-graph with $n$ vertices and $b(n)$ edges is complete odd-bipartite. 
\end{theorem}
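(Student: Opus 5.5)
We will use the standard stability route: first an asymptotic density-and-stability statement, then a cleanup argument that upgrades it to the exact result. The lower bound $\mathrm{ex}(n,C_{4k+2}^{4-})\ge b(n)$ is immediate, because $\mathbb{B}_4^{\mathrm{odd}}(n)$ is $C_{4k+2}^{4-}$-free (\cref{lem:odd-bip-Fk-free}). For the upper bound we need, for every fixed $k\ge 1$, a stability theorem: every $C_{4k+2}^{4-}$-free $\mathcal{H}$ with $|\mathcal{H}|\ge(1/2-\varepsilon)\binom n4$ is $\delta n^4$-close to some $\mathbb{B}_4^{\mathrm{odd}}(V_1,V_2)$. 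We do not use \cref{THM:main_C_4k+2} directly, since it needs $k\ge 2$ and concerns the full cycle. Instead we use a blow-up argument. The family of $C_{4k+2}^{4-}$-free graphs is closed under blowups restricted to the forbidden subgraph's homomorphic images. Since $C_{4k+2}^{4-}$ is contained in a blowup of $C_6^{4-}$ (tight paths can be wrapped), a $C_{4k+2}^{4-}$-free graph has few copies of $C_6^{4-}$. By supersaturation and removal, it suffices to prove stability for $C_6^{4-}$-free graphs. That case contains the expanded triangle $\mathcal{C}_3^4$, so the Keevash--Sudakov stability for $\mathcal{C}_3^4$ gives closeness to the odd-bipartite structure. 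We expect the paper to supply this step, possibly via the same machinery as \cref{THM:main_C_4k+2}.

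Given stability, we carry out the exact step as follows. First, we pass to a subgraph of large minimum degree. If $\mathcal{H}$ has $b(n)$ edges, we iteratively delete vertices of degree below $(1/2-\eta)\binom{n-1}{3}$. Standard counting shows that only $o(n)$ vertices are removed. If any were removed we would end with more edges than $b(n')$ on $n'$ vertices, which contradicts stability once $n$ is large. Hence we may assume $\delta(\mathcal{H})\ge(1/2-\eta)\binom{n-1}3$. Next, we take the partition $V_1\sqcup V_2$ that minimizes the number of \emph{bad} edges, meaning edges of $\mathcal{H}$ meeting $V_1$ in an even number of vertices. By stability, both the bad edges and the \emph{missing} odd edges number $O(\delta n^4)$. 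The minimum-degree condition together with the optimality of the partition shows that every vertex has link $\eta'$-close to the link it has in $\mathbb{B}_4^{\mathrm{odd}}$. This holds because a vertex whose link is balanced between the two behaviours could be moved, or would have degree roughly $\frac14\binom n3$ in each mode.

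The core step is to show that there are no bad edges. Suppose $e$ is a bad edge. We build a copy of $C_{4k+2}^{4-}$ through $e$ in which every other edge is an odd edge, so lies in $\mathcal{H}$ with high probability. For this we choose a cyclic sequence of $4k+2$ vertices with prescribed sides: the four vertices of $e$ are consecutive, and the remaining vertices are assigned to $V_1$ or $V_2$ so that each of the other $4k+1$ windows is odd, except one window which becomes the removed edge. A parity computation shows this is possible exactly because the removed edge absorbs the single parity obstruction that makes $C_{4k+2}^4$ itself embeddable only with a contradiction. Since each window has positive density of available choices and every vertex of $e$ has a typical link, a greedy or counting embedding goes through. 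Therefore $\mathcal{H}$ has no bad edges, so $\mathcal{H}\subseteq\mathbb{B}_4^{\mathrm{odd}}(V_1,V_2)$. Then $|\mathcal{H}|\le b(n)$, with equality only if $\mathcal{H}$ is the complete odd-bipartite graph with optimal part sizes. The vertices deleted in the first step are handled by the same argument, since we showed none are deleted at equality.

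The main obstacle is this embedding step. The bad edge can have any even intersection with $V_1$, namely $0$, $2$ or $4$, and each case needs its own side pattern. We must also make sure the pattern uses vertices of $e$ only through windows whose links are typical, which is why we need the link-closeness for all vertices and not merely on average. We would first check the small case $k=1$ (six vertices) by hand for each intersection type, and then extend to general $k$ by inserting blocks of four vertices that preserve the parities.
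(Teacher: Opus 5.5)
Your skeleton (blowup reduction to $C_6^{4-}$, stability, minimum degree, an optimal partition, then removing bad edges) matches the paper's. However, the step that is supposed to finish the proof is false as stated. You exclude a bad edge $e$ by embedding a copy of $C_{4k+2}^{4-}$ through it, arguing that typicality of the vertices of $e$ lets a greedy embedding go through. This cannot work, because at least one window adjacent to $e$ is not omitted and it contains a triple of $e$, while the next windows contain pairs of $e$. In the paper's notation, with $m=4k+2$, these are $W_2=cbdy_5$, $W_3=bdy_5y_6$ and $W_{m-1}=y_{m-1}y_mac$. Whether they can be filled depends on $d_{\mathcal M}(T)$ for triples $T\subseteq e$ and on $d_{\mathcal M}(p)$ for pairs $p\subseteq e$, and vertex typicality controls neither. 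Concretely, take $\mathbb{B}_4^{\mathrm{odd}}(V_1,V_2)$, add one even $4$-set $E$, and delete the at most $4n$ template edges meeting $E$ in three vertices. If a copy used $E=W_j$, one of $W_{j-1},W_{j+1}$ would be a present edge meeting $E$ in three vertices, and all such edges were deleted. So this $4$-graph is $C_{4k+2}^{4-}$-free, satisfies your degree condition, has $d_{\mathcal M}(v)+d_{\mathcal B}(v)=O(n)$ for every $v$, and still contains a bad edge. Bad edges can therefore only be excluded globally, by using $|H|\ge b(n)\ge|\mathcal T|$ to trade each bad edge against missing edges. The paper proves only a forcing statement, \cref{clm:local-forcing}: each bad edge has a triple with $d_{\mathcal M}\ge c_0n$ or a pair with $d_{\mathcal M}\ge c_0n^2$. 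It then bounds $d_{\mathcal B}$ on pairs in \cref{clm:pair-bad}. Finally it uses the potential $\Phi(E)$, built from the missing degrees of the pairs and triples of $E$ weighted by $1/d_{\mathcal B}$ so that one heavily missing triple shared by many bad edges is not overcounted. This potential satisfies $\sum_{E\in\mathcal B}\Phi(E)\le 10|\mathcal M|$ but exceeds $10|\mathcal B|$ whenever $\mathcal B\ne\varnothing$, which forces $|H|<|\mathcal T|\le b(n)$.

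Several earlier steps are also unjustified.
\begin{itemize}
\item \emph{Vertex links.} Minimum degree and optimality of the partition give only $g_v\ge h_v$, where $g_v$ and $h_v$ count the triples of $L_H(v)$ in $\mathsf O(A,B)$ and in $\mathsf E(A,B)$. A link that is a density-$\frac12$ mixture of both classes satisfies the degree bound and gains nothing from moving $v$. So $C_{4k+2}^{4-}$-freeness must be used inside $L_H(v)$, and you never use it there. The paper does this in \cref{lem:P-extension}: each colored copy of a member of $\mathcal P$ in $L_H(v)$ closes up with template edges into $\Omega(n^{4k-4})$ potential copies of $C_{4k+2}^{4-}$. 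Each potential copy must contain a missing edge, so $L_H(v)$ has only $O(|\mathcal M|\,n)$ such copies. The colored $3$-graph stability result \cref{lem:local-link-stability} then puts $L_H(v)$ close to $\mathsf O(A,B)$ or to $\mathsf E(A,B)$, and optimality excludes the latter.
\item \emph{Stability input.} The containment $\mathcal C_3^4\subseteq C_6^{4-}$ works against you. $C_6^{4-}$-free $4$-graphs form a larger class than $\mathcal C_3^4$-free ones and may contain many expanded triangles, so Keevash--Sudakov says nothing about them. The paper needs its own flag-algebra result, \cref{THM:stabi_C6minus}.
\item \emph{Degree reduction.} Ending with more than $b(n')$ edges does not contradict stability. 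The contradiction has to come from applying the exact argument to the reduced graph, as organized in \cref{lem:min-degree-reduction}.
\end{itemize}
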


 We remark that $\mathbb{B}_{4}^{\mathrm{odd}}(n)$ is not an exact extremal construction for the full cycle $C_{4k+2}^{4}$.
Indeed, by \cref{prop:full-cycle-extra-edges}, for every fixed $k\ge1$ and all sufficiently large $n$, one can add $\Omega(n)$ further edges to an extremal complete odd-bipartite $4$-graph without creating a copy of $C_{4k+2}^{4}$.
Also, $\mathbb{B}_{4}^{\mathrm{odd}}(n)$ is not extremal construction for the other two congruence classes $C_{4k+1}^{4-}$ and $C_{4k+3}^{4-}$, since for $n$ large enough, $\mathbb{B}_{4}^{\mathrm{odd}}(n)$ already contains the corresponding cycle with one edge removed (see \cref{SEC:Remarks} for more details).

The value $b(n)=b_4(n)$ can be written explicitly in terms of the part sizes.
If the two parts have sizes $a$ and $n-a$, then $\left|\mathbb{B}_{4}^{\mathrm{odd}}(a,n-a)\right| =a\binom{n-a}{3}+(n-a)\binom{a}{3}$. 
Equivalently, writing $d \coloneqq |2a-n|$, this quantity is
\[
\frac{(n^2-d^2)(n^2+d^2-6n+8)}{48}
=\frac{(n^2-3n+4)^2-\left(d^2-(3n-4)\right)^2}{48}.
\]
Thus the optimal part sizes are those for which $d^2$ is closest to $3n-4$, subject to $d\equiv n\pmod 2$.  In particular, the extremal complete odd-bipartite $4$-graphs are not generally strictly balanced: the two parts differ by $\sqrt{3n}+O(1)$, not by at most one.
Moreover, the optimizing part sizes need not be unique.  Whenever $3n-5=m^2$, the two admissible choices $d=m-1$ and $d=m+1$ both attain the maximum; this gives infinitely many values of $n$ for which the extremal complete odd-bipartite $4$-graph is not unique up to isomorphism.

For $k\ge2$, the preceding stability theorem also applies to $C_{4k+2}^{4-}$, since every $C_{4k+2}^{4-}$-free $4$-graph is $C_{4k+2}^{4}$-free.
The remaining endpoint $k=1$ requires a separate stability statement, which will be used in the exact proof.

\begin{theorem}\label{THM:stabi_C6minus}
For every $\delta >0$, there exist $\varepsilon > 0$ and $N_0$ such that every $C_{6}^{4-}$-free $4$-graph $\mathcal{H}$ on $n \ge N_0$ vertices with $|\mathcal{H}| \ge \left(1/2 - \varepsilon\right) \binom{n}{4}$ can be transformed into a complete odd-bipartite $4$-graph by adding and deleting at most $\delta n^4$ edges. 
\end{theorem}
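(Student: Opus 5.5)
The plan is to combine a flag-algebra density bound with a local characterisation of odd-bipartite $4$-graphs on five vertices, and then pass from local to global structure by the hypergraph removal lemma. Throughout, $C_6^{4-}$ is the graph with edges $1234,2345,3456,4561,5612$, i.e.\ the edge $6123$ is removed. The expanded-triangle stability theorem of Keevash--Sudakov cannot be used directly, because $C_6^{4-}$-freeness does not imply $\mathcal{C}_3^4$-freeness. Nor can we reduce to a smaller $4$-graph by blow-ups. Every non-adjacent pair of $C_6^{4-}$ already lies in a common edge, namely $\{1,4\}\subseteq 1234$, $\{2,5\}\subseteq 2345$ and $\{3,6\}\subseteq 3456$. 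Hence $C_6^{4-}$ has no homomorphic image on fewer vertices.

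\textbf{Step 1 (local characterisation).} Call a $4$-graph $\mathcal{F}$ on five vertices \emph{good} if it is a subgraph of some $\mathbb{B}_4^{\mathrm{odd}}(V_1,V_2)$ with $V_1\sqcup V_2=V(\mathcal{F})$, and let $\mathcal{B}$ be the finite family of $5$-vertex $4$-graphs that are not good. I would first prove a purely combinatorial lemma. If every $5$-set of a $4$-graph $\mathcal{G}$ is good, and $\mathcal{G}$ has at least $(1/2-o(1))\binom{n}{4}$ edges, then $\mathcal{G}$ is $o(n^4)$-close to a complete odd-bipartite $4$-graph. The proof should go through the $5$-vertex constraints. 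Fix a vertex $v$ whose link $\mathcal{G}_v$ has density about $1/2$. Goodness of all $5$-sets containing $v$ forces $\mathcal{G}_v$ to be close to the link of an odd-bipartite $4$-graph, namely the union of the triples meeting $V_1\setminus\{v\}$ evenly or oddly according to the side of $v$. This determines a bipartition, and goodness of the remaining $5$-sets makes the bipartition consistent across almost all choices of $v$.

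\textbf{Step 2 (flag algebra).} The main analytic step is a flag-algebra computation on $6$- or $7$-vertex $C_6^{4-}$-free $4$-graphs. It should certify
\[
\rho(\mathcal{H})+c\sum_{\mathcal{F}\in\mathcal{B}} p(\mathcal{F};\mathcal{H}) \le \tfrac12+o(1)
\]
for some constant $c>0$, where $\rho$ is the edge density and $p(\mathcal{F};\mathcal{H})$ is the induced density. This simultaneously gives $\pi(C_6^{4-})\le 1/2$ and shows that every near-extremal $\mathcal{H}$ has $o(n^5)$ induced bad $5$-sets. The lower bound $\pi(C_6^{4-})\ge 1/2$ comes from \cref{lem:odd-bip-Fk-free}. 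I expect this step to be the main obstacle: the certificate must be found and rounded exactly, and it may need to be run on $7$ vertices with a carefully chosen type basis. If it fails, the fallback is Sankar's link-graph method. Each pair link $L(\{4,5\})$ in a copy of $C_6^{4-}$ contains the path $2\text{-}3\text{-}6\text{-}1$, so one would argue that typical pair links are triangle-free or bipartite-like, and then reassemble the structure via \cref{THM:main_C_4k+2}'s argument.

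\textbf{Step 3 (cleaning and conclusion).} Given $\mathcal{H}$ with $|\mathcal{H}|\ge(1/2-\varepsilon)\binom n4$, Step 2 gives $o(n^5)$ induced copies of members of $\mathcal{B}$. By the induced hypergraph removal lemma, we can edit $o(n^4)$ edges to obtain a $\mathcal{G}$ in which every $5$-set is good, and $\mathcal{G}$ still has $(1/2-o(1))\binom n4$ edges. Step 1 then makes $\mathcal{G}$, and hence $\mathcal{H}$, $\delta n^4$-close to a complete odd-bipartite $4$-graph, provided $\varepsilon$ is chosen small enough and $N_0$ large enough.
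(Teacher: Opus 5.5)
\textbf{There is a genuine gap: your Step~1 is false, and the argument breaks there.}

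On five vertices every $4$-set is the complement of a single vertex. So a $5$-vertex $4$-graph is determined up to isomorphism by its number of edges. Any such graph with at most four edges lies inside $\mathbb{B}_4^{\mathrm{odd}}(\{u\},S\setminus\{u\})$, where $u$ is any vertex with $S\setminus\{u\}$ not an edge. Hence your family $\mathcal B$ is just $\{K_5^4\}$, and Step~1 asserts that every $K_5^4$-free $4$-graph of density at least $1/2-o(1)$ is close to complete odd-bipartite.

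That is false. Take $\mathbb{B}_4^{\mathrm{odd}}(X,Y)$ with $|X|=|Y|=n/2$, and let $M$ be a random $0/1$ matrix indexed by $X\times Y$. Add every $\{x,x',y,y'\}$ with $x,x'\in X$, $y,y'\in Y$ for which $M_{xy}+M_{xy'}+M_{x'y}+M_{x'y'}$ is odd (Giraud's construction). This graph is $K_5^4$-free:
\begin{itemize}
\item In a $5$-set $\{x_1,x_2,x_3,y_1,y_2\}$, the three $2{+}2$ subsets have parities $r_j+r_k$, where $r_i=M_{x_iy_1}+M_{x_iy_2}$. These three parities sum to $0$ modulo $2$, so one of those subsets is a non-edge.
\item The case with two vertices in $X$ and three in $Y$ is symmetric.
\item Every other $5$-set contains a $4$-set inside one part, which is not an edge.
\end{itemize}
The graph has density $11/16$, so it is $\Omega(n^4)$ edits away from every complete odd-bipartite $4$-graph. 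Your heuristic that goodness of the $5$-sets through $v$ pins down the link of $v$ is exactly what fails.

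Passing to induced $5$-vertex types does not rescue Step~1. The induced odd-bipartite $5$-vertex types are exactly those in which the $5$-set spans $0$, $2$ or $4$ edges. Let $g$ be random on triples and set $f(U)\coloneqq\sum_{T\in\binom{U}{3}}g(T)\pmod 2$. Then $f$ has this property for every $5$-set and has density about $1/2$. However, it violates the relation \eqref{EQ:six_relation} on a constant fraction of configurations: the left-hand side reduces to the sum of $g(pqr)$ over $p\in\{a,b\}$, $q\in\{c,d\}$, $r\in S\setminus\{a,b,c,d\}$. So $f$ is far from odd-bipartite.

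The missing ingredient is genuinely $6$-vertex information. The paper uses the same kind of certificate as your Step~2, but reads off that the only zero-slack $6$-vertex types are the four induced odd-bipartite types $Q_0,\dots,Q_3$ (\cref{LEM:no_6vertex_C6minus}). After induced removal, every $6$-set is one of these. Then \cref{LEM:final_structure} uses \eqref{EQ:six_relation} to define $y_{xy}$ consistently and assemble a global parity labelling. The cleaned graph is therefore exactly complete odd-bipartite, with no density hypothesis needed. Your Steps~2 and~3 have the right architecture, but the certificate must suppress every non-odd-bipartite induced $6$-vertex type, not just $K_5^4$.
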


The case $k=1$ is closely related to the classical Tur\'{a}n problem for expanded triangles. 
For $r \ge 1$, let $\mathcal{C}^{2r}_{3}$ denote the $2r$-graph with vertex set $\{1,\ldots,3r\}$ and edge set 
\begin{align*}
    \left\{\{1,\ldots, r, r+1, \ldots, 2r\}, \{r+1, \ldots, 2r, 2r+1, \ldots, 3r\}, \{1,\ldots, r, 2r+1, \ldots, 3r\}\right\}. 
\end{align*}
Equivalently, $\mathcal{C}^{2r}_{3}$ is obtained from a triangle by replacing each vertex by an $r$-set and each graph edge by the union of the two corresponding $r$-sets.
The Tur\'{a}n problem for $\mathcal{C}^{2r}_{3}$ was first considered by
Frankl~\cite{Frankl90}, who proved that $\pi(\mathcal{C}^{2r}_{3}) = 1/2$. 
Later, Keevash and Sudakov~\cite{KS05a} proved that $\mathrm{ex}(n,\mathcal{C}^{2r}_{3}) = b_{2r}(n)$ for sufficiently large $n$, and every $\mathcal{C}^{2r}_{3}$-free $2r$-graph with $n$ vertices and $b_{2r}(n)$ edges is complete odd-bipartite. 
They also proved the corresponding stability result. 
 
Observe that the expanded triangle $\mathcal{C}^{4}_{3}$ is a subgraph of $C_6^{4-}$ (see \cref{fig:expandedK3}). 
Therefore, the case $k=1$ of \cref{THM:main_C_4k+2minus} strengthens the exact extremal result of Keevash and Sudakov~\cite{KS05a} from $\mathcal{C}^{4}_{3}$-free $4$-graphs to the larger class of $C_6^{4-}$-free $4$-graphs. 
Together with \cref{THM:stabi_C6minus}, this also gives the corresponding stability strengthening.

\paragraph{Organization of the paper.}
In \cref{SEC:Preliminaries} we collect the notation, standard results, and flag algebra setup used throughout the paper.
In \cref{SEC:stability} we prove the two stability results: the density and stability theorem for $C_{4k+2}^4$, and the stability theorem for $C_6^{4-}$.
In \cref{SEC:C6minus_exact} we prove the exact Tur\'{a}n theorem for $C_{4k+2}^{4-}$.
Finally, \cref{SEC:Remarks} contains concluding remarks.

\section{Preliminaries}\label{SEC:Preliminaries}
%%%%%%%%%%%%%%%%%%%%%%%%%%%%%%%%%%%%%%%%%%%%%%%%

Let $H$ be an $r$-graph and $U\subseteq V(H)$ be a subset.
We use $H[U]$ to denote the subgraph of $H$ induced by $U$.  For a vertex $v\in V(H)$, the \emph{link} of $v$ is the $(r-1)$-graph
\[
L_H(v)\coloneqq\left\{S\in\tbinom{V(H)\setminus\{v\}}{r-1}\colon S\cup\{v\}\in H\right\}.
\]
The \emph{degree} of $v$ in $H$ is $d_H(v)\coloneqq |L_H(v)|$, and the \emph{minimum vertex degree} (or \emph{minimum $1$-degree}) of $H$ is $\delta_1(H)\coloneqq \min_{v\in V(H)} d_H(v)$.

Let $H$ and $G$ be two $r$-graphs.  A \emph{homomorphism} from $H$ to $G$ is a map $\varphi:V(H)\to V(G)$ such that $\varphi(e)\in G$ for every $e\in H$.  For an integer $t\ge 1$, the \emph{$t$-blowup} $H[t]$ is obtained by replacing each vertex $x\in V(H)$ with a set $V_x$ of $t$ clones and replacing each edge $x_1\cdots x_r\in H$ with all $r$-sets $y_1\cdots y_r$ such that $y_i\in V_{x_i}$ for every $i$.  Thus an $r$-graph embeds into a blowup of $H$ precisely when it admits a homomorphism to $H$ with bounded preimages.

The next standard blowup result is the mechanism for passing from the two base configurations, $C_{10}^4$ and $C_6^{4-}$, to the longer cycles considered in the main theorems.
Part \textup{(i)} follows from Erd\H{o}s's hypergraph supersaturation theorem~\cite{Erdos67a}, and part \textup{(ii)} is a standard consequence of the hypergraph removal lemma~\cite{RS09}.

\begin{lemma}\label{LEM:blowup_lemma}
Let $F$ be a fixed $r$-graph and let $t\ge1$ be fixed.
The following statements hold.
\begin{enumerate}[label=\textup{(\roman*)}]
\item $\pi(F[t])=\pi(F)$.
\item For every $\eta>0$, there exists $N_0$ such that every $F[t]$-free $r$-graph on $n\ge N_0$ vertices can be made $F$-free by deleting at most $\eta n^r$ edges.
\end{enumerate}
\end{lemma}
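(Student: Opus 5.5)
Part (i) is the supersaturation argument. Since $F\subseteq F[t]$, we have $\pi(F)\le\pi(F[t])$. For the reverse inequality, fix $\varepsilon>0$ and let $H$ be an $r$-graph on $n$ vertices with $|H|\ge(\pi(F)+\varepsilon)\binom{n}{r}$. Write $m\coloneqq|V(F)|$.

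Step 1. By Erd\H{o}s's supersaturation theorem~\cite{Erdos67a}, $H$ contains at least $c n^{m}$ copies of $F$, where $c=c(\varepsilon,F)>0$. The proof is by averaging over $M$-subsets, with $M$ chosen so large that $\mathrm{ex}(M,F)<(\pi(F)+\varepsilon/2)\binom{M}{r}$. A positive proportion of the $M$-sets then carry more than $\mathrm{ex}(M,F)$ edges, and each such set contains a copy of $F$. Each copy of $F$ lies in at most $\binom{n-m}{M-m}$ of these sets, which gives the bound.

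Step 2. Encode the copies as an auxiliary $m$-partite $m$-graph. Fix an ordering $x_1,\ldots,x_m$ of $V(F)$, take a uniformly random partition of $V(H)$ into parts $U_1,\ldots,U_m$, and form the $m$-partite $m$-graph $G$ whose edges are the transversal $m$-tuples $(u_1,\ldots,u_m)$ with $u_i\in U_i$ for which $x_i\mapsto u_i$ embeds $F$ into $H$. In expectation, $G$ still has $c' n^m$ edges.

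Step 3. By Erd\H{o}s's theorem that $\pi(K^{(m)}_{t,\ldots,t})=0$, the graph $G$ contains a complete $m$-partite $K^{(m)}_{t,\ldots,t}$ once $n$ is large. Its vertex classes $W_i\subseteq U_i$ give exactly an embedding of $F[t]$: for any edge $x_{i_1}\cdots x_{i_r}$ of $F$ and any choice $w_j\in W_{i_j}$, the vertices $w_1,\ldots,w_r$ belong to a full transversal $m$-tuple of $G$, so $\{w_1,\ldots,w_r\}\in H$. Hence $\mathrm{ex}(n,F[t])\le(\pi(F)+\varepsilon)\binom{n}{r}$ for all large $n$, and therefore $\pi(F[t])\le\pi(F)$.

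Part (ii) combines the removal lemma with the same counting. Suppose $H$ is $F[t]$-free but cannot be made $F$-free by deleting $\eta n^r$ edges. The hypergraph removal lemma~\cite{RS09} then gives at least $c(\eta,F)n^m$ copies of $F$ in $H$. Steps 2 and 3 apply unchanged, since they only used the lower bound on the number of copies of $F$, not the edge density. They produce a copy of $F[t]$ in $H$ for $n\ge N_0$, which is a contradiction.

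The only delicate point is Step 2, where "many copies of $F$" must be turned into a dense $m$-partite $m$-graph. This requires fixing the labelled embedding so that the $i$-th coordinate plays the role of $x_i$. Copies of $F$ counted with different labellings, or with repeated vertices, cost only constant factors. Once this encoding is in place, the blowup falls out of the complete $m$-partite structure.
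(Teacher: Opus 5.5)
Your proposal is correct and follows the same route the paper indicates. The paper gives no detailed proof; it cites Erd\H{o}s's supersaturation/K\H{o}v\'ari--S\'os--Tur\'an theorem for (i) and the hypergraph removal lemma for (ii), and your chain (many labelled copies of $F$, then a dense $m$-partite $m$-graph via a random partition, then a complete $K^{(m)}_{t,\ldots,t}$ whose classes form a copy of $F[t]$) is exactly the standard derivation behind those citations. One small point: the classes of the $K^{(m)}_{t,\ldots,t}$ automatically lie one in each $U_i$, up to permuting indices, because every edge of $G$ is transversal.
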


The second standard tool is an induced removal lemma.
It will convert small induced densities of forbidden local configurations into a small edit distance from being induced-copy-free.

\begin{lemma}[{\cite{ARS07,RS09}}]\label{LEM:removal_lemma}
Let $r,C\in\mathbb N$ and $\delta>0$ be fixed.  For every family $\mathcal F$ of $r$-graphs on at most $C$ vertices, there exist $\gamma>0$ and $N_0$ such that the following holds.  If an $r$-graph $\mathcal H$ on $n\ge N_0$ vertices contains at most $\gamma n^{|V(F)|}$ induced copies of $F$ for every $F\in\mathcal F$, then $\mathcal H$ can be made induced-$\mathcal F$-free by adding and deleting at most $\delta n^r$ edges.
\end{lemma}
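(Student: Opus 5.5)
This is the induced removal lemma of Alon--Rényi--Shapira and Rödl--Schacht; it is cited, not proved from scratch. The plan below indicates how it follows from the hypergraph regularity method.

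First, reduce to a single forbidden graph. Since $\mathcal F$ is a family of $r$-graphs on at most $C$ vertices, up to isomorphism it has at most $N(C,r)$ members, a constant depending only on $C$ and $r$. We therefore work with the whole family at once and choose $\gamma$ as a minimum over finitely many parameters.

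Second, apply the strong (iterated) hypergraph regularity lemma of Rödl--Schacht to $\mathcal H$. This yields a vertex partition $V_1,\dots,V_t$ with $t$ bounded in terms of $\delta,C,r$, together with a family of lower-order partitions. Each resulting polyad is regular with respect to $\mathcal H$ with an error $\varepsilon(\cdot)$ that may depend on the densities of the coarser partition; this is the Alon--Fischer--Krivelevich--Szegedy style "regularity within regularity". Passing to a refinement gives, for each polyad, a density $d$ that agrees with the density of a subpolyad chosen with all intended cells, up to a small error.

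Third, perform the cleaning step. Round each polyad density to $0$ or $1$: if $d<\delta'$, delete all edges of $\mathcal H$ on that polyad; if $d>1-\delta'$, add all missing edges there; leave intermediate densities unchanged. Also delete or add arbitrarily the edges meeting an irregular polyad or a small exceptional set. The total change is at most $\delta n^r$ provided the thresholds are chosen small.

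Fourth, argue by contradiction. Suppose the edited graph still contains an induced copy of some $F\in\mathcal F$. Its vertices land in polyads that, after cleaning, are either complete, empty, or of intermediate density in the original graph. The induced counting lemma for regular complexes, applied to the refined partition where every relevant polyad has density bounded away from $0$ and from $1$, produces at least $c\,n^{|V(F)|}$ induced copies of $F$ in $\mathcal H$. Here $c$ depends only on the bounded parameters. Setting $\gamma<c$ contradicts the hypothesis.

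The main obstacle is that a copy of $F$ may have several vertices in one cell, and the edges of $F$ inside a single cell must be realized induced as well. This is handled by the refinement step, which selects subcells in which even the within-cell structure is homogeneous. It is the technical core of the Rödl--Schacht proof, so in the paper we simply invoke \cite{ARS07,RS09}.
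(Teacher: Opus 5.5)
Your proposal matches the paper: the paper gives no proof of this lemma and simply invokes \cite{ARS07,RS09}, so the citation is all that is required. The key ARS07 presumably refers to Avart--R\"odl--Schacht; there is no ``Alon--R\'enyi--Shapira'' paper.

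If you keep the regularity sketch, its cleaning step fails as written in the induced setting. You cannot add or delete edges ``arbitrarily'' on irregular polyads or at an exceptional vertex set. The edited hypergraph could then contain an induced copy of some $F\in\mathcal F$ that uses those $r$-sets or vertices. Your contradiction step would then have no regular subpolyad of suitable density in which to count copies of $F$ in $\mathcal H$.

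In the Alon--Fischer--Krivelevich--Szegedy and R\"odl--Schacht scheme, every coarse polyad, regular or not, is emptied, filled, or left unchanged according to the density of its representative subpolyad. All of these subpolyads are regular by construction. Irregularity of the coarse polyad only affects the cost of the edit, which stays small because such polyads are few. Exceptional vertices, if present, must be absorbed into cells rather than having their edges edited freely.

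Similarly, your first step should not be described as a reduction to a single $F$. Edits that destroy induced copies of one member can create induced copies of another. This is exactly why the cleaning must be done once for the whole family, as you in fact do.
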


For two sets $A$ and $B$, their \emph{symmetric difference} is $A\triangle B\coloneqq (A\setminus B)\cup(B\setminus A)$. 
For two $r$-graphs $H_1,H_2$ on the same vertex set, their \emph{edit distance} is $\mathrm{edit}(H_1, H_2) \coloneqq \min_{\phi}|\phi(H_1) \triangle H_2|$, where $\phi$ ranges over all bijections between the vertex sets of $H_1$ and $H_2$.  We say that $H_1$ can be transformed into $H_2$ by at most $\eta n^r$ \emph{edits} if $\mathrm{edit}(H_1, H_2) \le \eta n^r$, where $n$ is the common number of vertices.  For a fixed $r$-graph $F$, let $\ind(F,H)$ denote the number of vertex subsets $U\subseteq V(H)$ with $H[U]\cong F$.

The notation $\mathbb{B}_4^{\mathrm{odd}}(X,Y)$ will be used for arbitrary disjoint vertex sets $X,Y$, not only for partitions of $[n]$.

The following fact is the parity reformulation of complete odd-bipartiteness.

\begin{fact}\label{FACT:odd-label}
A $4$-graph $G$ is complete odd-bipartite if and only if there exists a map $\chi:V(G)\to\{0,1\}$ such that, for every $4$-set $x_1x_2x_3x_4\subseteq V(G)$, we have
\begin{equation}\label{EQ:odd_label}
x_1x_2x_3x_4\in G
\quad\text{if and only if}\quad
\chi(x_1)+\chi(x_2)+\chi(x_3)+\chi(x_4)\equiv 1\pmod 2.
\end{equation}
\end{fact}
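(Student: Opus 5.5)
The statement to prove is \cref{FACT:odd-label}. Both directions follow by unwinding the definition of $\mathbb{B}_4^{\mathrm{odd}}(V_1,V_2)$, using $\chi$ as the indicator function of $V_1$.

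\emph{Forward direction.} Suppose $G=\mathbb{B}_4^{\mathrm{odd}}(V_1,V_2)$ for some partition $V(G)=V_1\sqcup V_2$. Define $\chi(x)=1$ if $x\in V_1$ and $\chi(x)=0$ if $x\in V_2$. For any $4$-set $x_1x_2x_3x_4$, the sum $\chi(x_1)+\cdots+\chi(x_4)$ equals $|\{x_1,\ldots,x_4\}\cap V_1|$. By definition, the $4$-set is an edge of $G$ exactly when this intersection size is odd. This gives \eqref{EQ:odd_label}.

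\emph{Converse direction.} Suppose $\chi$ satisfies \eqref{EQ:odd_label}. Set $V_1\coloneqq\chi^{-1}(1)$ and $V_2\coloneqq\chi^{-1}(0)$, which partition $V(G)$. By the same identity, the sum $\sum\chi(x_i)$ counts $|\{x_1,\ldots,x_4\}\cap V_1|$. So the $4$-sets in $G$ are precisely those meeting $V_1$ in an odd number of vertices, that is, $G=\mathbb{B}_4^{\mathrm{odd}}(V_1,V_2)$.

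There is no real obstacle here. The only point needing care is the degenerate case where one part is empty. This is allowed: the definition of $\mathbb{B}_4^{\mathrm{odd}}$ permits any partition, and the empty graph arises with $V_1=\emptyset$ (and, when $n\ge 4$, also with $\chi\equiv 1$, since every $4$-set then has even sum). So the equivalence holds without exception.
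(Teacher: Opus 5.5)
Your argument is correct. It matches the paper, which states this as a Fact without proof and calls it the parity reformulation of complete odd-bipartiteness; that is exactly your identification of $\chi$ with the indicator function of $V_1$. Your remark that empty parts are allowed is consistent with the paper, which later realizes the empty graph $Q_0$ by the bipartition with $A_0=\varnothing$.
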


The parity formulation gives the lower-bound construction for the exact problem.
We record the short freeness argument now, since it will be used throughout the paper.

\begin{lemma}\label{lem:odd-bip-Fk-free}
For every integer $k\ge 1$, every complete odd-bipartite $4$-graph is
$C_{4k+2}^{4-}$-free. Consequently,
\[
\ex(n,C_{4k+2}^{4-})\ge b(n).
\]
\end{lemma}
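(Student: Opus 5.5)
We argue via the parity labelling of \cref{FACT:odd-label}. Let $G$ be complete odd-bipartite with labelling $\chi$. Suppose for contradiction that $G$ contains a copy of $C_{4k+2}^{4-}$, with cyclically ordered vertices $v_1,\dots,v_{\ell}$, $\ell=4k+2$. The edges are $e_i=\{v_i,v_{i+1},v_{i+2},v_{i+3}\}$ for $i\in\mathbb{Z}/\ell\mathbb{Z}$, and without loss of generality the missing edge is $e_{\ell}$. Write $x_i\coloneqq\chi(v_i)\in\mathbb{F}_2$. Every present edge is in $G$, so
\[
x_i+x_{i+1}+x_{i+2}+x_{i+3}=1 \quad\text{for all } i\in\{1,\dots,\ell-1\}.
\]
The plan is to show that this linear system over $\mathbb{F}_2$ has no solution when $\ell\equiv 2\pmod 4$.

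First, subtract consecutive equations. For $1\le i\le \ell-2$, comparing the equations for $e_i$ and $e_{i+1}$ gives $x_i=x_{i+4}$, with indices taken mod $\ell$. These relations use $e_1,\dots,e_{\ell-1}$ only, so they are $\ell-2$ relations $x_i=x_{i+4}$ for $i=1,\dots,\ell-2$.

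Next, use $\gcd(4,\ell)=2$. The map $i\mapsto i+4$ on $\mathbb{Z}/\ell\mathbb{Z}$ splits the indices into two orbits, the odd indices and the even indices, each a cycle of length $\ell/2=2k+1$. On the odd orbit the relations $x_i=x_{i+4}$ for odd $i\le \ell-2$ cover every step of the cycle except the step starting at $i=\ell$, which is even and so not in this orbit. Hence all odd-indexed $x_i$ are equal, say to $a$. The even orbit is missing only the step from $\ell$, and a cycle with one step removed is still a path through all its vertices, so all even-indexed $x_i$ are equal, say to $b$.

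Finally, substitute into the equation for $e_1$:
\[
x_1+x_2+x_3+x_4=2a+2b=0\neq 1,
\]
a contradiction. So $G$ contains no copy of $C_{4k+2}^{4-}$.

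The bound $\ex(n,C_{4k+2}^{4-})\ge b(n)$ then follows by taking $G=\mathbb{B}_4^{\mathrm{odd}}(n)$.

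The only delicate point is the orbit bookkeeping: checking that removing a single edge kills at most one step in each $+4$ orbit, so that both orbits stay connected. An alternative check avoids this. Summing all $\ell-1$ equations counts each $x_j$ a number of times determined by its position, which can be computed directly; but the orbit argument is cleaner.
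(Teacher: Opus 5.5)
Your proof is correct and is essentially the paper's argument. Both rest on the shift relations $x_{i+4}=x_i$ together with $\gcd(4,\ell)=2$, which force a $2$-periodic labelling and hence an even sum on every consecutive $4$-tuple. The only difference is in handling the omitted edge. The paper first uses $\sum_i s_i\equiv 0\pmod 2$ to show the omitted $4$-tuple must also be odd, and then applies all $\ell$ shift relations. You instead show directly that the $\ell-2$ available relations already connect each $+4$ orbit.

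There is one bookkeeping slip. The odd orbit is not fully covered: the step starting at $i=\ell-1$ (which is odd) is absent, just as the step at $i=\ell$ is absent from the even orbit. Since each $(2k+1)$-cycle still loses only one step, both orbits stay connected and your conclusion is unaffected, as your closing remark says.
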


\begin{proof}
Let $m\coloneqq4k+2$, and let $G\coloneqq\mathbb{B}_4^{\mathrm{odd}}(V_1,V_2)$. Suppose for a contradiction that
$G$ contains a copy of $C_m^{4-}$ on cyclically ordered vertices $x_1,\dots,x_m$.
For each $i\in\mathbb Z/m\mathbb Z$, define $\eta_i\coloneqq1$ if $x_i\in V_1$, and
$\eta_i\coloneqq0$ otherwise. Also set
\[
s_i\coloneqq\eta_i+\eta_{i+1}+\eta_{i+2}+\eta_{i+3}\pmod 2.
\]
The four consecutive vertices $x_i,x_{i+1},x_{i+2},x_{i+3}$ form an edge of $G$ exactly when the corresponding $s_i$ equals $1$.
Since $C_m^{4-}$ has $m-1$ prescribed edges, at least $m-1$ of the numbers $s_i$ are
equal to $1$. But
\[
\sum_{i=1}^{m}s_i\equiv 4\sum_{i=1}^{m}\eta_i\equiv 0\pmod 2,
\]
so the number of $i$ with $s_i=1$ is even. Thus either all $s_i$ are equal to $1$, or
at most $m-2$ of them are equal to $1$.

The first case is impossible. Indeed, if all $s_i=1$, then
$s_{i+1}-s_i\equiv \eta_{i+4}-\eta_i\equiv 0\pmod 2$, so $\eta_{i+4}=\eta_i$ for all $i$.
Since $\gcd(4,m)=2$, the sequence $(\eta_i)$ is $2$-periodic. Hence every $s_i$ is the
sum of two copies of the same two numbers, so $s_i\equiv 0\pmod 2$, a contradiction.

Therefore $G$ contains at most $m-2$ cyclic edges on any cyclically ordered $m$-tuple,
and so $G$ is $C_m^{4-}$-free.
\end{proof}

To use the blowup lemma, we need the following concrete embeddings of the longer cycles into blowups of the two base configurations.

\begin{lemma}\label{LEM:cycle-blowup-embeddings}
The following statements hold.
\begin{enumerate}[label=\textup{(\roman*)}]
\item If $k\ge2$ and $m=4k+2$, then $C_m^4\subseteq C_{10}^4[k-1]$.
\item If $k\ge1$ and $m=4k+2$, then $C_m^{4-}\subseteq C_6^{4-}[k]$.
\end{enumerate}
\end{lemma}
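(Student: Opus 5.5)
We prove both parts by writing down explicit homomorphisms $\varphi$ from the cycle into the base configuration in which every vertex of the base configuration has at most $t$ preimages, with $t=k-1$ in (i) and $t=k$ in (ii). As recorded after the definition of the $t$-blowup, such a homomorphism gives an embedding into the $t$-blowup: send the preimages of each vertex $x$ injectively into the clone class $V_x$. Both homomorphisms come from one observation. Label the vertices of the target cycle by $\mathbb Z/\ell\mathbb Z$, so that its edges are the sets $\{j,j+1,j+2,j+3\}$. Describe a candidate map by the cyclic word $w_1w_2\cdots w_m$ of images of $x_1,\dots,x_m$. Then $\varphi$ is a homomorphism exactly when every required window $\{w_i,w_{i+1},w_{i+2},w_{i+3}\}$ is an edge of the target. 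Two kinds of segments produce only edge windows:
\begin{itemize}
\item a run $j,j+1,j+2,\dots$ that advances by one each step;
\item a repetition of the block $j,j+1,j+2,j+3$. Every window inside such a repetition is a cyclic rotation of the same $4$-set, so it equals the edge $\{j,\dots,j+3\}$.
\end{itemize}
The junctions between these segments are again windows of consecutive labels, so they are also edges.

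For (ii), label $C_6^{4-}$ by $1,\dots,6$, with the removed edge $\{6,1,2,3\}$. Take the cyclic word
\[
(1\,2\,3\,4)^{k-1}\,1\,2\,3\,4\,5\,6,
\]
which has length $4(k-1)+6=4k+2=m$. Its windows are as follows:
\begin{itemize}
\item those inside the repeated blocks all equal $\{1,2,3,4\}$;
\item the next two are $\{2,3,4,5\}$ and $\{3,4,5,6\}$;
\item the three wrap-around windows are $\{4,5,6,1\}$, $\{5,6,1,2\}$ and $\{6,1,2,3\}$.
\end{itemize}
Only the last of these is not an edge of $C_6^{4-}$. We therefore let it correspond to the edge removed from $C_m^4$, which yields a copy of $C_m^{4-}$. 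Vertices $1,2,3,4$ each have exactly $k$ preimages, and $5,6$ have one each, so the embedding lies in $C_6^{4-}[k]$.

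For (i), the target is $C_{10}^4$ and all windows must be edges. Take one full traversal $0,1,\dots,9$ of $\mathbb Z/10\mathbb Z$. At $k-2$ chosen places, after reaching some $j+3$, insert one extra copy of the block $j,j+1,j+2,j+3$. The total length is $10+4(k-2)=4k+2=m$.

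The only real issue is the load bound: every vertex must have at most $k-1$ preimages. The base traversal contributes $1$ to each vertex, and each inserted block adds $1$ to four consecutive vertices. We rotate the starting points $j$ of the inserted blocks through $0,4,8,2,6,0,4,\dots$ (steps of $4$ modulo $10$), which keeps the loads as even as possible. The first five blocks cover every vertex exactly twice, so after $q$ complete rounds of five blocks every vertex has load $1+2q$.

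We verify $\max$ load $\le k-1$ case by case:
\begin{itemize}
\item $k=2$: no blocks are inserted, and the maximum load is $1=k-1$.
\item $k=3$: one block is inserted, and the maximum load is $2=k-1$.
\item $k\ge4$: write $k-2=5q+s$ with $0\le s\le 4$. The $s$ leftover blocks add at most $2$ to any vertex (for $s\le 2$ at most $1$), so the maximum load is at most $1+2q+2$. It suffices to check $1+2q+2\le k-1=5q+s+1$, that is, $2\le 3q+s$. This holds because $3q+s\ge k-2\ge 2$ when $q\ge1$, and $s=k-2\ge 2$ when $q=0$.
\end{itemize}

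This bookkeeping is the step most likely to need care. It is also where the hypothesis $k\ge2$ is used, since for $k=1$ the length $m=6$ is less than $10$ and no such word exists.
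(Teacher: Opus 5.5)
Your proof is correct. Part (ii) is the paper's argument: your cyclic word $(1\,2\,3\,4)^{k-1}\,1\,2\,3\,4\,5\,6=(1\,2\,3\,4)^{k}\,5\,6$ is a cyclic rotation of the paper's word $1,2,3,4,5,6,(1,2,3,4)^{k-1}$.

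Part (i) takes a needlessly elaborate route. The paper simply appends $k-2$ copies of one fixed block, using the word $0,1,\ldots,9,(0,1,2,3)^{k-2}$. Then the labels $0,1,2,3$ have exactly $1+(k-2)=k-1$ preimages and every other label has one. So the load bound you call ``the only real issue'' is already met, with equality, by the naive choice, and no balancing is required.

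Your scheme of rotating the block starts through $0,4,8,2,6,\ldots$ is nonetheless valid. Each inserted block contributes one step of $-3$ followed by three steps of $+1$. Hence any three consecutive steps contain at most one backward step, and every window of four consecutive terms is a set of four cyclically consecutive labels, i.e.\ an edge of $C_{10}^4$. Your approach even gives the stronger embedding $C_m^4\subseteq C_{10}^4[t]$ with $t$ of order $2k/5$. This gains nothing here, because \cref{LEM:blowup_lemma} only needs some fixed $t$.

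There is one small slip in your bookkeeping. The inequality ``$3q+s\ge k-2$'' is backwards, since $k-2=5q+s\ge 3q+s$. The bound $3q+s\ge 2$ that you actually need holds simply because $3q\ge 3$ when $q\ge 1$.
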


\begin{proof}
For the first embedding, write the vertices of $C_{10}^4$ as $0,1,\ldots,9$ cyclically.  Map the cyclic vertex sequence of $C_m^4$ to
\[
0,1,2,3,4,5,6,7,8,9,
\underbrace{0,1,2,3,\,0,1,2,3,\ldots,0,1,2,3}_{k-2\text{ copies of }0,1,2,3}.
\]
This sequence has length $10+4(k-2)=4k+2$.  Every four consecutive terms, cyclically, form an edge of $C_{10}^4$, and no label occurs more than $k-1$ times.  Hence $C_m^4\subseteq C_{10}^4[k-1]$.

For the second embedding, write the vertices of $C_6^{4-}$ as $1,\ldots,6$ cyclically, with the edge $6123$ deleted.  Map the cyclic vertex sequence of $C_m^{4}$ to
\[
1,2,3,4,5,6,
\underbrace{1,2,3,4,\,1,2,3,4,\ldots,1,2,3,4}_{k-1\text{ copies of }1,2,3,4}.
\]
This sequence has length $6+4(k-1)=4k+2$.  The only four consecutive terms which do not form an edge of $C_6^{4-}$ are $6,1,2,3$, and this exceptional $4$-tuple occurs once.  Deleting the corresponding edge from $C_m^4$ gives a copy of $C_m^{4-}$, and every remaining edge maps to an edge of $C_6^{4-}$.  No label occurs more than $k$ times, so $C_m^{4-}\subseteq C_6^{4-}[k]$.
\end{proof}

Write $[m]\coloneqq\{0,1,\ldots,m-1\}$.  The $4$-graph $\mathcal J_{5,0}$ has vertex set $[5]$, and the $4$-graphs $\mathcal J_{6,i}$ have vertex set $[6]$.  They are defined by their edge sets as follows:
\[
\begin{aligned}
\mathcal J_{5,0}\coloneqq{}&
\{0123,1234,0134,0234,0124\},\\
\mathcal J_{6,0}\coloneqq{}&
\{0135,2345,0345,0123,1234,0134,0124,0145\},\\
\mathcal J_{6,1}\coloneqq{}&
\{1345,0135,1235,0235,2345,0123,1234,0125,1245\},\\
\mathcal J_{6,2}\coloneqq{}&
\{0123,1234,1345,1235,0125,0235,2345\},\\
\mathcal J_{6,3}\coloneqq{}&
\{0345,0123,1234,0125,2345,0145\},\\
\mathcal J_{6,4}\coloneqq{}&
\{0345,0123,1234,0134,1245,2345\},\\
\mathcal J_{6,5}\coloneqq{}&
\{0345,0123,0234,0124,2345\},\\
\mathcal J_{6,6}\coloneqq{}&
\{0245,0345,0123,1234,0125,2345,0145\}.
\end{aligned}
\]
In the verification script these configurations are denoted by $\mathrm{C10\_J5\_0}, \mathrm{C10\_J6\_0},\ldots,\mathrm{C10\_J6\_6}$. 
They are excluded from the $C_{10}^4$ flag algebra computation because each of them admits a homomorphism from $C_{10}^4$.

\begin{lemma}\label{LEM:C10-zero-density-configs}
For each of the configurations $\mathcal J_{5,0}, \mathcal J_{6,0},\ldots,\mathcal J_{6,6}$, there exists a homomorphism from $C_{10}^4$ to it.  Consequently, each of these configurations has limiting density zero in every $C_{10}^4$-free sequence.
\end{lemma}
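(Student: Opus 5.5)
The plan is to prove the homomorphism statement by exhibiting, for each configuration $\mathcal J\in\{\mathcal J_{5,0},\mathcal J_{6,0},\ldots,\mathcal J_{6,6}\}$, an explicit cyclic word $w=(w_0,\dots,w_9)$ over $V(\mathcal J)$. The word must have the property that for every $i\in\mathbb Z/10\mathbb Z$ the four letters $w_i,w_{i+1},w_{i+2},w_{i+3}$ are pairwise distinct and form an edge of $\mathcal J$. The map $v_i\mapsto w_i$ is then a homomorphism $C_{10}^4\to\mathcal J$: every edge of $C_{10}^4$ is exactly such a window, and distinctness inside each window is forced because the image must be a $4$-set. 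This is the same device already used in the proof of \cref{LEM:cycle-blowup-embeddings}.

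The easiest case is $\mathcal J_{5,0}$. Its edge set consists of all five $4$-subsets of $[5]$, so $\mathcal J_{5,0}=K_5^4$. The word $0,1,2,3,4,0,1,2,3,4$ works, since any four cyclically consecutive terms are four distinct elements of $[5]$.

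For the six-vertex configurations I would find the words systematically. Form the auxiliary digraph $D_{\mathcal J}$ whose nodes are ordered triples $(a,b,c)$ of distinct vertices. There is an arc $(a,b,c)\to(b,c,d)$ whenever $\{a,b,c,d\}\in\mathcal J$. A homomorphism from $C_{10}^4$ is precisely a closed walk of length $10$ in $D_{\mathcal J}$, so it suffices to search for one in each of the seven digraphs. This is a finite check, by hand using the edge lists or by the verification script mentioned above. I would write the resulting ten-letter word into the proof for each $\mathcal J_{6,i}$, so that the reader can confirm the ten windows directly. A useful heuristic when searching by hand is to combine two shorter closed walks through a common triple whose lengths add to $10$, for instance $5+5$ or $4+6$; lengths $4$ and $6$ cannot close up alone, since $C_4^4$ and $C_6^4$ have length divisible by or too close to $4$. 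The main obstacle is finding these words for the sparser configurations, such as $\mathcal J_{6,5}$ with only five edges. There the closed walks are few, and the search must be exhaustive rather than guided.

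For the consequence, suppose a sequence of $C_{10}^4$-free $4$-graphs $\mathcal H_n$ had $\ind(\mathcal J,\mathcal H_n)\ge c\,n^{|V(\mathcal J)|}$ for some $c>0$ along a subsequence. Then $\mathcal H_n$ contains at least $c\,n^{|V(\mathcal J)|}$ copies of $\mathcal J$ (not necessarily induced). By Erd\H{o}s's supersaturation/blowup theorem, the same mechanism behind \cref{LEM:blowup_lemma}\textup{(i)}, $\mathcal H_n$ then contains $\mathcal J[t]$ for every fixed $t$ once $n$ is large. Take $t=10$. The homomorphism $C_{10}^4\to\mathcal J$ has all preimages of size at most $10$, so $C_{10}^4\subseteq\mathcal J[10]\subseteq\mathcal H_n$, a contradiction. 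Hence every such configuration has limiting (induced) density zero.
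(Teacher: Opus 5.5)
Your treatment of $\mathcal J_{5,0}$ is correct: it is $K_5^4$, and the word $0,1,2,3,4,0,1,2,3,4$ works. Your deduction of the density statement is also sound.

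\textbf{The gap.} The first assertion of the lemma is exactly that each of the seven configurations $\mathcal J_{6,0},\ldots,\mathcal J_{6,6}$ receives a homomorphism from $C_{10}^4$. Your proposal only describes a search for closed walks of length $10$ in $D_{\mathcal J}$ and never carries it out. You even single out $\mathcal J_{6,5}$ as the main obstacle, so as written nothing excludes that the search fails there. This cannot be treated as routine, because for many six-vertex $4$-graphs no such walk exists. For instance, the parity argument in the proof of \cref{lem:odd-bip-Fk-free} rules out any homomorphism from $C_{10}^4$ into $Q_1,Q_2,Q_3$. The paper fills this step by exhibiting the words explicitly, and you need to do the same, e.g.
\[
\begin{array}{c|c}
\text{target} & (\varphi(0),\ldots,\varphi(9))\\ \hline
\mathcal J_{6,0} & (0,1,2,3,4,1,0,3,5,4)\\
\mathcal J_{6,1} & (0,1,2,3,4,5,1,2,3,5)\\
\mathcal J_{6,2} & (0,1,2,3,4,5,1,3,2,5)\\
\mathcal J_{6,3} & (0,1,2,3,0,1,2,3,4,5)\\
\mathcal J_{6,4} & (0,1,2,3,4,1,2,5,4,3)\\
\mathcal J_{6,5} & (0,1,2,3,0,4,5,3,2,4)\\
\mathcal J_{6,6} & (0,1,2,3,0,1,2,3,4,5)
\end{array}
\]
Each row is checked by listing its ten cyclic windows. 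For $\mathcal J_{6,5}$ they are $0123,0123,0234,0345,0345,2345,2345,0234,0124,0124$, all of which are edges.

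\textbf{Your search heuristic.} It is wrong on one point. A closed walk of length $4$ exists through every ordered triple $(a,b,c)$ of any edge $abcd$, namely $(a,b,c)\to(b,c,d)\to(c,d,a)\to(d,a,b)\to(a,b,c)$. Consequently any closed walk $w_0,\ldots,w_5$ of length $6$ (a homomorphic image of $C_6^4$) immediately gives the valid word $w_0,w_1,w_2,w_3,w_0,w_1,w_2,w_3,w_4,w_5$. This is exactly how the words for $\mathcal J_{6,3}$ and $\mathcal J_{6,6}$ arise: $\mathcal J_{6,3}$ is itself $C_6^4$ on $0,1,\ldots,5$, and $\mathcal J_{6,6}$ is $C_6^4$ plus $0245$.

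\textbf{The consequence.} Here you take a different but equally valid route. The paper uses \cref{LEM:blowup_lemma}\textup{(ii)}. A $C_{10}^4$-free graph is $\mathcal J[t]$-free, so deleting $o(n^4)$ edges makes it $\mathcal J$-free. Each deleted edge lies in $O(n^{|V(\mathcal J)|-4})$ copies of $\mathcal J$, so there are only $o(n^{|V(\mathcal J)|})$ copies. You instead use Erd\H{o}s supersaturation: $\Omega(n^{|V(\mathcal J)|})$ copies of $\mathcal J$ force $\mathcal J[10]\supseteq C_{10}^4$. Your route avoids the hypergraph removal lemma and is the more elementary argument; the paper's simply reuses a tool it needs elsewhere.

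One small point: the later flag algebra step needs the density of not necessarily induced copies of $\mathcal J$ to vanish. So state your contradiction hypothesis for all copies rather than for $\ind(\mathcal J,\mathcal H_n)$; the rest of your argument goes through verbatim.
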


\begin{proof}
The following table gives explicit homomorphisms from $C_{10}^4$ to these configurations.
A row $(a_0,a_1,\ldots,a_9)$ means that the homomorphism sends $i$ to $a_i$ for each $i\in\{0,1,\ldots,9\}$:
\[
\begin{array}{c|c}
\text{target} & (\varphi(0),\varphi(1),\ldots,\varphi(9))\\ \hline
\mathcal J_{5,0} & (0,1,2,3,0,4,1,2,3,4)\\
\mathcal J_{6,0} & (0,1,2,3,4,1,0,3,5,4)\\
\mathcal J_{6,1} & (0,1,2,3,4,5,1,2,3,5)\\
\mathcal J_{6,2} & (0,1,2,3,4,5,1,3,2,5)\\
\mathcal J_{6,3} & (0,1,2,3,0,1,2,3,4,5)\\
\mathcal J_{6,4} & (0,1,2,3,4,1,2,5,4,3)\\
\mathcal J_{6,5} & (0,1,2,3,0,4,5,3,2,4)\\
\mathcal J_{6,6} & (0,1,2,3,0,1,2,3,4,5).
\end{array}
\]

The displayed rows give the required homomorphisms.  If $J$ is one of the configurations above and $t$ is the maximum preimage size of the displayed homomorphism $C_{10}^4\to J$, then $C_{10}^4\subseteq J[t]$.  Hence every $C_{10}^4$-free $4$-graph is $J[t]$-free.  By \cref{LEM:blowup_lemma}\textup{(ii)}, deleting $o(n^4)$ edges makes such a $4$-graph $J$-free.  Every copy of $J$ in the original $4$-graph must then contain one of the deleted edges, and a fixed deleted edge is contained in $O(n^{|V(J)|-4})$ copies of $J$.  Thus the number of copies of $J$ is $o(n^{|V(J)|})$.
\end{proof}

\bigskip

We use the flag algebra method of Razborov~\cite{Raz07}, also described for hypergraph Tur\'an problems, for example, in~\cite{Raz10,BT11}.  Since the method is now standard, we only recall the form of the computer-assisted certificates used below and refer the reader to the cited papers for the general definitions.  Roughly speaking, for Tur\'an problems in $4$-uniform hypergraphs, a flag algebra proof using $0$-flags on $m$ vertices of an upper bound $u\in\mathbb R$ for a density expression $f$ consists of an asymptotic identity
\begin{equation}\label{EQ:flag_identity}
u-f(H)=\mathrm{SOS}+\sum_{F\in\mathcal F_m^0} c_F\,p(F,H)+o(1),
\end{equation}
valid for every admissible $4$-graph $H$ with $|V(H)|\to\infty$.  Here $\mathrm{SOS}$ is a sum-of-squares term, $\mathcal F_m^0$ is the set of admissible unlabeled $m$-vertex flags, $p(F,H)$ is the induced density of $F$ in $H$, and all coefficients $c_F$ are non-negative.  Thus \eqref{EQ:flag_identity} implies $f(H)\le u+o(1)$ for every admissible sequence.  When $f$ is represented as a linear combination of densities of members of $\mathcal F_m^0$, optimizing the value of $u$ becomes a semidefinite program.

The same identity also gives the local stability information used later.  A flag $F$ has \emph{positive slack} if its coefficient $c_F$ in the exact certificate is positive.  If an admissible sequence attains the bound $u$ asymptotically, then every non-negative term in \eqref{EQ:flag_identity} must vanish in the limit; in particular, every positive-slack flag has limiting density zero.  Therefore the zero-slack flags are the only $m$-vertex induced types that can occur with positive density in an extremal or near-extremal sequence.  This observation is the bridge from the flag algebra certificates below to the local stability statements used in the stability proofs~\cite{PikhurkoSliacanTyros19}.

In each computer-assisted input below, we state the forbidden theory, the target size, the density bound certified by the SDP, the zero-slack flags, and the resulting local stability consequence.  The SDP solutions are rationalized and verified exactly.
All files needed to verify the exact certificates, including the rational positive semidefinite matrices, the admissible type lists, the zero-slack labels, and the verification scripts, are available at
\url{https://github.com/xliu2022/xliu2022.github.io/tree/main/FlagAlgebra_Certificate/FourUniformTightCycles}.
For completeness, the Appendix also gives a human-readable proof of the $C_6^{4-}$ upper bound extracted from the flag algebra certificate.

\section{Proofs of Theorems~\ref{THM:main_C_4k+2} and~\ref{THM:stabi_C6minus}}\label{SEC:stability}

%%%%%%%%%%%%%%%%%%%%%%%%%%%%%%%%%%%%%%%%%%%%%%%%
\subsection{Structure from \texorpdfstring{$6$}{6}-vertex subgraphs}

First, we define the following six $4$-graphs, all with vertex set $\{0,1,2,3,4,5\}$.
\begin{enumerate}[label=\textup{(\roman*)}]
\item $Q_0$ is the $4$-graph with no edge.
\item $Q_1$ has edge set $\{0123,0124,0125,0134,0135,0145,0234,0235,0245,0345\}$. 
\item $Q_2$ has edge set $\{0234,0235,0245,0345,1234,1235,1245,1345\}$. 
\item $Q_3$ has edge set $\{0123,0124,0125,0345,1345,2345\}$. 
\item $Q_4$ has edge set $\{0123,0124,0135,0234,0345,1245,1345\}$. 
\item $Q_5$ has edge set $\{0123,0124,0125,0134,0135,0145,0234,0245,0345\}$. 
\end{enumerate}

\begin{fact}\label{FACT:six-vertex-odd-bipartite-types}
The $4$-graphs $Q_0,Q_1,Q_2,Q_3$ are precisely the induced $6$-vertex subgraphs which occur in complete odd-bipartite $4$-graphs, up to isomorphism.
More explicitly, with vertex set $\{0,1,2,3,4,5\}$, they are realized by the following bipartitions:
\[
\begin{array}{c|c|c}
& A_i & B_i\\
\hline
Q_0 & \varnothing & \{0,1,2,3,4,5\}\\
Q_1 & \{0\} & \{1,2,3,4,5\}\\
Q_2 & \{0,1\} & \{2,3,4,5\}\\
Q_3 & \{0,1,2\} & \{3,4,5\}
\end{array}
\]
in the sense that $Q_i=\mathbb{B}_4^{\mathrm{odd}}(A_i,B_i)$ for the corresponding row.
\end{fact}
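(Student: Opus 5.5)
The plan is to classify the induced $6$-vertex subgraphs of $\mathbb{B}_4^{\mathrm{odd}}(V_1,V_2)$ by the number of vertices that fall in $V_1$, and then verify the stated edge sets directly. By \cref{FACT:odd-label}, the induced subgraph on a $6$-set $U$ depends up to isomorphism only on $j\coloneqq|U\cap V_1|\in\{0,\dots,6\}$. Indeed, any bijection of $U$ preserving the sets $U\cap V_1$ and $U\cap V_2$ is an automorphism of the induced $4$-graph.

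The first step reduces the seven values of $j$ to four. Since a $4$-set meets $U\cap V_1$ in an odd number of vertices if and only if it meets $U\cap V_2$ in an odd number, swapping the roles of the two parts gives the same $4$-graph. Hence $j$ and $6-j$ yield isomorphic subgraphs, and only $j\in\{0,1,2,3\}$ remain. These are exactly the four rows of the table, with $A_i=U\cap V_1$ of size $i$.

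The second step checks each row against the listed edge sets.
\begin{itemize}
\item For $j=0$ no $4$-set has odd intersection with $A_0=\varnothing$, so we get $Q_0$.
\item For $j=1$ with $A_1=\{0\}$, the edges are exactly the $4$-sets containing $0$. That is $\binom{5}{3}=10$ sets, matching $Q_1$.
\item For $j=2$ with $A_2=\{0,1\}$, the edges are the $4$-sets containing exactly one of $0,1$ together with three of $\{2,3,4,5\}$. That gives $2\cdot 4=8$ sets, matching $Q_2$.
\item For $j=3$ with $A_3=\{0,1,2\}$, a $4$-set meets $A_3$ in one vertex (plus all of $\{3,4,5\}$) or in three vertices (plus one of $\{3,4,5\}$). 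This gives $\{0345,1345,2345\}\cup\{0123,0124,0125\}$, matching $Q_3$.
\end{itemize}
In particular, $Q_0,Q_1,Q_2,Q_3$ all occur as soon as both parts have at least three vertices.

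The third step shows the four graphs are pairwise non-isomorphic, so the list is exact rather than redundant. Their edge counts are $0,10,8,6$, which are pairwise distinct, so this is immediate.

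There is no real obstacle here. The only point needing care is the complement symmetry $j\leftrightarrow 6-j$ in the first step, which relies on $|U|=6$ and the edges being $4$-sets, both even.
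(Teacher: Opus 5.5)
Your proof is correct and is essentially the verification the paper leaves implicit. The paper states this as a Fact and lets the bipartition table serve as the check; you likewise reduce to $j=|U\cap V_1|$, use the part-swap symmetry to restrict to $j\in\{0,1,2,3\}$, and match the four edge lists.

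There are two harmless slips. First, your aside that all four types occur once both parts have at least three vertices is false: with $|V_1|=|V_2|=3$ only $Q_3$ occurs, and $Q_0$ needs a part of size at least $6$. Second, the swap symmetry uses only that edges have even size, not that $|U|$ is even. Neither affects the argument, since each $Q_i=\mathbb{B}_4^{\mathrm{odd}}(A_i,B_i)$ is itself complete odd-bipartite.
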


The next proposition is the local stability statement extracted from the six-vertex types above.
It says that, once all other six-vertex induced subgraphs are rare, the $4$-graph is close to the complete odd-bipartite template; the exceptional types $Q_4$ and $Q_5$ are removed inside the proof.

\begin{proposition}\label{PROP:structure_from_six_vertex}
For every $\delta>0$, there exist $\varepsilon>0$ and $N_0$ such that the following statement holds.  If $\mathcal H$ is a $4$-graph on $n\ge N_0$ vertices such that $\ind(Q,\mathcal H) \le \varepsilon n^6$ for every $4$-graph $Q$ on $6$ vertices other than the $4$-graphs $Q_0,Q_1,Q_2,Q_3,Q_4,Q_5$, then $\mathcal H$ can be transformed into a complete odd-bipartite $4$-graph by adding and deleting at most $\delta n^4$ edges.
\end{proposition}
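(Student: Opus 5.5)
We will combine the induced removal lemma (\cref{LEM:removal_lemma}) with a structural analysis of $4$-graphs whose every $6$-vertex induced subgraph is one of $Q_0,\dots,Q_5$. Given $\delta>0$, we apply \cref{LEM:removal_lemma} with $\mathcal F$ the family of all $6$-vertex $4$-graphs other than $Q_0,\dots,Q_5$ and accuracy $\delta/2$. This shows that $\mathcal H$ can be edited by at most $(\delta/2)n^4$ edges into a $4$-graph $\mathcal H'$ in which every $6$-set induces one of $Q_0,\dots,Q_5$. The rest of the argument is deterministic: we show that such an $\mathcal H'$ (for $n$ large) is within $(\delta/2)n^4$ edits of a complete odd-bipartite $4$-graph.

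The first step is to eliminate $Q_4$ and $Q_5$. We will examine the $5$-vertex induced subgraphs of $Q_4$ and $Q_5$. Each $5$-set of vertices in $\mathcal H'$ lies in $n-5$ six-sets, and every one of these must be among $Q_0,\dots,Q_5$. We aim to show, by a finite case check, that if some $6$-set induces $Q_4$ or $Q_5$, then extending a suitable $5$-subset by a seventh vertex forces a forbidden $6$-vertex configuration. We would first try to show that any $7$-vertex $4$-graph all of whose $6$-subsets lie in $\{Q_0,\dots,Q_5\}$ and which contains $Q_4$ or $Q_5$ belongs to a small explicit list. Should such configurations survive (for instance, a bounded exceptional structure), we would instead prove only that $\ind(Q_4,\mathcal H')+\ind(Q_5,\mathcal H')=o(n^6)$, and then apply \cref{LEM:removal_lemma} once more to reach a graph whose $6$-sets all induce $Q_0,\dots,Q_3$. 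This elimination is the step I expect to be the main obstacle, since it requires understanding precisely why $Q_4$ and $Q_5$ are locally consistent but not globally extendable. It is essentially a computer-checkable finite verification on $7$ or $8$ vertices.

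Once every $6$-set induces one of $Q_0,\dots,Q_3$, every $6$-set is complete odd-bipartite by \cref{FACT:six-vertex-odd-bipartite-types}. We then recover a global labelling $\chi$ as in \cref{FACT:odd-label}. Define $u\sim v$ if, for some (equivalently every) triple $T$ disjoint from $\{u,v\}$, we have $T\cup\{u\}\in\mathcal H'$ iff $T\cup\{v\}\in\mathcal H'$. Within any $6$-set, with its odd-bipartite labelling, $u\sim v$ holds exactly when $\chi(u)=\chi(v)$, so local consistency makes the relation independent of $T$. Transitivity can be checked inside a $6$-set containing the three vertices involved. A $6$-set that is all of $Q_0$ is ambiguous (all labels even, or all odd with no edges when $|A|$ is $0$ or $6$), so we handle this by comparing along triples. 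Alternatively, we fix an edge $e$ if one exists, label its vertices via the local structure, and label every other vertex $v$ by parity consistency with $e$ inside $6$-sets containing $e\cup\{v\}$. If $\mathcal H'$ has no edges, it is $\mathbb B_4^{\mathrm{odd}}(\varnothing,[n])$ and we are done. Checking every $4$-set inside a $6$-set containing it together with the reference vertices shows $\mathcal H'=\mathbb B_4^{\mathrm{odd}}(V_1,V_2)$ exactly. Combined with the earlier edits, this gives total edit distance at most $\delta n^4$.
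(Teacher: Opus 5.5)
\paragraph{Gap: eliminating $Q_4$ and $Q_5$.} Your first removal step is sound, and so is your final labelling step. The middle step, getting rid of $Q_4$ and $Q_5$, is not carried out, and the primary route you propose for it cannot work. You hope a $7$- or $8$-vertex check will show that a $4$-graph whose $6$-sets all induce members of $\{Q_0,\dots,Q_5\}$ contains no $Q_4$ or $Q_5$ at all. This is false. Fix a vertex $c$ and a triple $\{x,y,z\}$ avoiding $c$, and let $\mathcal G$ consist of all $4$-sets containing $c$ except $cxyz$. Then:
a $6$-set missing $c$ induces $Q_0$;
a $6$-set containing $c$ but not all of $x,y,z$ induces $Q_1$;
a $6$-set containing $c,x,y,z$ induces $Q_1$ minus one edge, which is exactly $Q_5$ (send $c\mapsto 0$ and $\{x,y,z\}\mapsto\{2,3,5\}$).
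So $Q_5$ survives on any number of vertices, and no finite extension check can exclude it.

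Your fallback target, $\ind(Q_4,\mathcal H')+\ind(Q_5,\mathcal H')=o(n^6)$, is the right one, but you give no mechanism for it. It must also be a uniform bound (e.g.\ $O(n^5)$), because the second removal application fixes its threshold $\gamma$ in terms of $\delta$ alone.

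\paragraph{The missing idea.} Count induced copies of the $5$-vertex $4$-graph $T_3$ with three edges.
\begin{enumerate}[label=(\arabic*)]
\item Every $5$-vertex induced subgraph of $Q_0,\dots,Q_3$ has $0$, $2$ or $4$ edges, while $Q_4$ and $Q_5$ each contain an induced $T_3$.
\item An edge $X$ of $\mathcal H'$ has at most one vertex $u$ with $\mathcal H'[X\cup\{u\}]\cong T_3$. Two such vertices $u\ne u'$ would force $\mathcal H'[X\cup\{u,u'\}]$ to be $Q_4$ or $Q_5$. This $6$-set has maximum degree at most $8$, which excludes $Q_5$, since vertex $0$ of $Q_5$ lies in all nine edges. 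If it were $Q_4$, it would have exactly two edges through $uu'$ and $d(u)=d(u')=4$. That is impossible, because the two degree-$4$ vertices of $Q_4$ lie together in only one edge.
\item Double counting pairs (edge, extension) gives $\ind(T_3,\mathcal H')\le\frac13\binom n4$. Hence $\ind(Q_4,\mathcal H')+\ind(Q_5,\mathcal H')\le n\binom n4=O(n^5)$.
\item A second application of the removal lemma, to all $6$-vertex types other than $Q_0,\dots,Q_3$, then finishes, with the edit budget split between the two removals.
\end{enumerate}

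\paragraph{The labelling step.} Your argument via the relation $\sim$ works once written out: well-definedness by one-vertex swaps of $T$ inside a $6$-set, at most two classes, and the local bipartition of each $6$-set agreeing with the $\sim$-classes up to swapping sides. This is a slightly slicker variant of \cref{LEM:final_structure}, which instead has to rule out a global constant offset. Your worry that $Q_0$ is ``ambiguous'' is unfounded.
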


The proof of the proposition uses the following local-to-global structural lemma: if only genuine odd-bipartite six-vertex types remain, then the whole $4$-graph is odd-bipartite.

\begin{lemma}\label{LEM:final_structure}
Every $4$-graph $\mathcal G$ on at least $6$ vertices such that every induced $6$-vertex subgraph is isomorphic to one of $Q_0,Q_1,Q_2,Q_3$ is complete odd-bipartite.
\end{lemma}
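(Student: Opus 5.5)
We will construct a parity labelling $\chi\colon V(\mathcal G)\to\{0,1\}$ satisfying \eqref{EQ:odd_label} and then invoke \cref{FACT:odd-label}. By hypothesis, every $6$-set $S$ induces one of $Q_0,\dots,Q_3$, so by \cref{FACT:six-vertex-odd-bipartite-types} there is a labelling $\chi_S\colon S\to\{0,1\}$ realizing $\mathcal G[S]$. Such a labelling is never unique: complementing all labels preserves the parity of every $4$-set, since $4$ is even. The first step is to show that on a $6$-set this is the \emph{only} ambiguity. If $\chi_S$ and $\chi'_S$ both realize $\mathcal G[S]$, then $\psi\coloneqq\chi_S+\chi'_S$ has even sum on every $4$-subset of $S$. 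Comparing the $4$-sets $T\cup\{a\}$ and $T\cup\{b\}$ for a common $3$-set $T$ gives $\psi(a)=\psi(b)$, so $\psi$ is constant on $S$. Hence $\chi_S$ is determined up to global complementation, and it is the same argument on any set of at least $5$ vertices.

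The second step is to glue these local labellings. Because the uniqueness argument only needs $5$ points, we can phrase everything through relative parities. For distinct $x,y$, define $\rho(x,y)\coloneqq\chi_S(x)+\chi_S(y)$, where $S$ is any $6$-set containing $x,y$. This value is independent of the choice of $S$. Indeed, for two such sets $S,S'$, we can link them through a chain of $6$-sets, each containing $x,y$, in which consecutive sets share at least $5$ vertices; for example, replace the elements of $S\setminus S'$ by those of $S'\setminus S$ one at a time. On the common $\geq 5$-set, the restrictions of the two labellings both realize the induced subgraph there, so by the first step they agree up to complementation. Complementation preserves $\rho(x,y)$. The same reasoning shows that $\rho$ is additive, $\rho(x,z)=\rho(x,y)+\rho(y,z)$: just compute all three values inside a single $6$-set containing $x,y,z$, which is possible since $n\ge 6$.

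Finally, fix a vertex $v_0$ and set $\chi(v_0)\coloneqq0$ and $\chi(x)\coloneqq\rho(v_0,x)$ for $x\neq v_0$. Take any $4$-set $Q$, and choose a $6$-set $S\supseteq Q\cup\{v_0\}$; this is possible since $|Q\cup\{v_0\}|\le5$. By additivity and well-definedness, $\chi$ and $\chi_S$ differ on $S$ by a constant. Therefore $Q\in\mathcal G$ if and only if $\sum_{q\in Q}\chi(q)\equiv1\pmod 2$, and \cref{FACT:odd-label} finishes the proof.

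The main point needing care is the rigidity claim on sets of $5$ or more vertices: a labelling realizing a given $4$-graph is unique up to complementation. The argument above handles it, since it needs only two $4$-sets sharing three points, which requires $|S|\ge5$. The chain argument in the second step also has to use overlaps of at least $5$ vertices rather than $4$; on a $4$-set the labelling is far from unique, so smaller overlaps would not pin down $\rho$. With overlaps of $5$ the gluing goes through, and no case analysis of $Q_4$ and $Q_5$ is needed, since they are excluded by hypothesis.
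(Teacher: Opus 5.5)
Your proof is correct and is essentially the paper's argument: both take local labels on $6$-sets, show that the relative parity of two vertices is well defined by one-vertex swaps, and build a global label from a base vertex $v_0$. Here your $\rho(x,y)$ is the paper's $y_{xy}=f(R\cup\{x\})+f(R\cup\{y\})$, and your uniqueness up to complementation on $5$-point overlaps plays the role of the paper's four-term parity relation on $6$-sets. Your closing step, comparing $\chi$ with $\chi_S$ on a $6$-set $S\supseteq Q\cup\{v_0\}$, is a slightly more direct substitute for the paper's separate arguments that $h$ is constant and that this constant is $0$; that step also does not actually need the additivity of $\rho$, only its well-definedness.
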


\begin{proof}
Let $f:\binom{V(\mathcal G)}4\to\{0,1\}$ be the edge indicator of $\mathcal G$, and throughout this proof all sums involving values of $f$ are taken modulo $2$.  For every $S\in\binom{V(\mathcal G)}6$, the induced subgraph $\mathcal G[S]$ is, by hypothesis, isomorphic to one of $Q_0,Q_1,Q_2,Q_3$.  By \cref{FACT:six-vertex-odd-bipartite-types}, these four $4$-graphs are the complete odd-bipartite $4$-graphs on six vertices with part sizes $0|6$, $1|5$, $2|4$, and $3|3$.  Hence, by \cref{FACT:odd-label}, there are labels $\lambda_S(s)\in\{0,1\}$ for $s\in S$ such that, for each $4$-set $U\subseteq S$, we have $f(U)\equiv \sum_{u\in U}\lambda_S(u)\pmod 2$. 
It follows that, for every four distinct vertices $a,b,c,d\in S$, we have
\begin{equation}\label{EQ:six_relation}
f(S\setminus\{a,c\})+f(S\setminus\{a,d\})+f(S\setminus\{b,c\})+f(S\setminus\{b,d\})\equiv 0\pmod 2.
\end{equation}
Indeed, substituting the displayed expression for $f$ on the $6$-set $S$, each label $\lambda_S(s)$ appears an even number of times: twice for $s\in\{a,b,c,d\}$ and four times for $s\in S\setminus\{a,b,c,d\}$. Hence the total sum is congruent to $0$ modulo $2$.

For distinct vertices $x,y\in V(\mathcal G)$, choose a $3$-set $R\subseteq V(\mathcal G)\setminus\{x,y\}$ and define $y_{xy}\coloneqq f(R\cup\{x\})+f(R\cup\{y\})\pmod 2$.
This value is independent of the choice of $R$.  To see this, first suppose that two choices differ in only one vertex, say $R=A\cup\{p\}$ and $R'=A\cup\{q\}$, where $|A|=2$ and $p,q,x,y$ are all outside $A$.  Applying \eqref{EQ:six_relation} to the $6$-set $A\cup\{p,q,x,y\}$ gives $f(Apx)+f(Apy)+f(Aqx)+f(Aqy)\equiv 0\pmod 2$,
which says precisely that the value obtained from $R$ is the same as the value obtained from $R'$.  Any two $3$-sets in $V(\mathcal G)\setminus\{x,y\}$ can be connected by a sequence of such one-vertex changes, so $y_{xy}$ is well defined.

The values $y_{xy}$ satisfy $y_{xy}+y_{yz}+y_{xz}\equiv 0\pmod 2$ for all distinct $x,y,z$.  Indeed, choosing a common $3$-set $R$ disjoint from $\{x,y,z\}$ and expanding the three definitions, each of $f(R\cup\{x\})$, $f(R\cup\{y\})$, and $f(R\cup\{z\})$ appears twice.  Now fix a vertex $v_0$.  Define a global label $\lambda:V(\mathcal G)\to\{0,1\}$ by setting $\lambda(v_0)\coloneqq 0$ and $\lambda(v)\coloneqq y_{v_0v}$ for $v\ne v_0$.
The triangle relation just proved gives $y_{xy}\equiv \lambda(x)+\lambda(y)\pmod 2$ for all distinct $x,y$.

Now define $h(U)\coloneqq f(U)+\sum_{u\in U}\lambda(u)\pmod 2$ for $U\in\binom{V(\mathcal G)}{4}$.
We first show that $h$ is constant on $\binom{V(\mathcal G)}4$.  Suppose that two
$4$-sets $U$ and $W$ share three vertices.  Write $U=R\cup\{x\}$ and $W=R\cup\{y\}$, where $|R|=3$ and $x\ne y$.  By the definition of $y_{xy}$ and by the identity
$y_{xy}\equiv \lambda(x)+\lambda(y)\pmod 2$, we have
\[
\begin{aligned}
h(U)+h(W)
\equiv f(R\cup\{x\})+f(R\cup\{y\})+\lambda(x)+\lambda(y) 
\equiv y_{xy}+y_{xy}
\equiv 0 \pmod 2.
\end{aligned}
\]
Since $h(U),h(W)\in\{0,1\}$, this implies $h(U)=h(W)$.

Now take arbitrary $4$-sets $U,W\subseteq V(\mathcal G)$.  Starting from $U$, one can
replace the vertices of $U\setminus W$ by the vertices of $W\setminus U$, one at a
time, to obtain a sequence of $4$-sets from $U$ to $W$ in which consecutive members
share exactly three vertices.  By the preceding paragraph, $h$ has the same value on
consecutive members of this sequence.  Hence $h(U)=h(W)$.  Thus $h$ is constant; write
$h\equiv c$, where $c\in\{0,1\}$.

It remains to prove that $c=0$.  Fix a $6$-set $S\subseteq V(\mathcal G)$.  For every
$4$-set $U\subseteq S$, the local representation of $f$ on $S$ gives
$f(U)\equiv \sum_{u\in U}\lambda_S(u)\pmod 2$.  Therefore, by the definition of $h$, we have $c\equiv h(U)\equiv \sum_{u\in U}\bigl(\lambda_S(u)+\lambda(u)\bigr)\pmod 2$ for every $U\in\binom S4$.  Set $\tau(s)\coloneqq \lambda_S(s)+\lambda(s)\pmod 2$ for $s\in S$.  Then $\sum_{u\in U}\tau(u)\equiv c\pmod 2$ for every $U\in\binom S4$.

Suppose, for a contradiction, that $c=1$.  We claim first that all values $\tau(s)$,
$s\in S$, are equal.  Indeed, take any two distinct vertices $x,y\in S$.  Since
$|S|=6$, we may choose a $3$-set $R\subseteq S\setminus\{x,y\}$.  Applying the
identity above to the two $4$-sets $R\cup\{x\}$ and $R\cup\{y\}$, we get $\sum_{r\in R}\tau(r)+\tau(x)\equiv 1\pmod 2$ and $\sum_{r\in R}\tau(r)+\tau(y)\equiv 1\pmod 2$.  Subtracting these two congruences gives $\tau(x)\equiv \tau(y)\pmod 2$.
Since $x$ and $y$ were arbitrary, all six labels $\tau(s)$ are equal, say to
$\theta\in\{0,1\}$.  But then every $4$-set $U\subseteq S$ satisfies
$\sum_{u\in U}\tau(u)\equiv 4\theta\equiv 0\pmod 2$, contradicting the assumption that this sum is always congruent to $c=1$.  Hence $c\ne 1$,
and therefore $c=0$.

Consequently, $f(A)\equiv \sum_{a\in A}\lambda(a)\pmod 2$ for every $4$-set $A$.  By \cref{FACT:odd-label}, $\mathcal G$ is complete odd-bipartite.
\end{proof}

\begin{proof}[Proof of \cref{PROP:structure_from_six_vertex}]
Let $\mathcal{F}_{6}^{0}$ denote the set of all $6$-vertex $4$-graphs.
Let
\[
\mathcal F_1\coloneqq\mathcal{F}_{6}^{0}\setminus\{Q_0,Q_1,Q_2,Q_3,Q_4,Q_5\}
\quad\text{and}\quad 
\mathcal F_2\coloneqq\mathcal{F}_{6}^{0}\setminus\{Q_0,Q_1,Q_2,Q_3\}.
\]
Let $T_3$ denote the unique $4$-graph on $5$ vertices with exactly three edges, up to isomorphism.  The uniqueness follows because every $4$-edge on a $5$-vertex set is the complement of one vertex.  We shall use the representative $T_3\coloneqq\{0123,0124,0234\}$.

\begin{claim}\label{CLM:T3_facts}
Among the $4$-graphs $Q_0,Q_1,Q_2,Q_3,Q_4,Q_5$, the $4$-graph $T_3$ occurs as an induced $5$-vertex subgraph only in $Q_4$ and $Q_5$.  Moreover, each of $Q_4$ and $Q_5$ contains an induced copy of $T_3$.
\end{claim}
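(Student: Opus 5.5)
The claim has two parts: $T_3$ never appears induced in $Q_0,\dots,Q_3$, and it does appear induced in each of $Q_4$ and $Q_5$. I would prove the first part structurally and the second by exhibiting explicit $5$-sets.

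\textbf{Part 1: $Q_0,\dots,Q_3$.} By \cref{FACT:six-vertex-odd-bipartite-types}, each of these is complete odd-bipartite. Complete odd-bipartiteness is inherited by induced subgraphs, so every induced $5$-vertex subgraph of $Q_0,\dots,Q_3$ is $\mathbb{B}_4^{\mathrm{odd}}(A,B)$ with $|A|+|B|=5$. A $4$-subset of the $5$-set is determined by the one vertex it omits. If $|A|=a$, the edge count is therefore:
\begin{itemize}
\item $0$ for $a=0$;
\item $4$ for $a=1$ (omit any vertex of $B$);
\item $2$ for $a=2$ (omit a vertex of $A$, leaving one $A$-vertex);
\item $3$ for $a=3$ (omit a vertex of $B$).
\end{itemize}
So only $a=3$ gives three edges, which would be an induced $T_3$. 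However, $a=3$ is impossible in $Q_0,Q_1,Q_2$, whose smaller part has size at most $2$.

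This leaves $Q_3$, where the part sizes are $3|3$, so a $5$-subset with $3$ vertices in $A_3$ could occur. By \cref{FACT:odd-label} and the symmetry $A\leftrightarrow B$ (a $4$-set meets $A$ oddly iff it meets $B$ oddly), the obstruction must come from the edge count. Taking $\{0,1,2,3,4\}$ in $Q_3$, the edges are $0123$, $0124$ and no others, only $2$. Recounting: $|A\cap U|$ odd means $1$ or $3$; the $4$-subsets omitting a vertex of $B$ have $|A\cap U|=3$, giving $2$ edges. So the correct count for $a=3$, $b=2$ is $2$, not $3$.

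Redoing the table carefully:
\begin{itemize}
\item $a=1$: omit any of the four $B$-vertices, giving $4$ edges.
\item $a=2$: omit one of the two $A$-vertices, giving $2$ edges.
\item $a=3$: omit one of the two $B$-vertices, giving $2$ edges.
\item $a=4$: omit the $A$-vertex? That leaves $|A\cap U|=3$, which is odd, so omitting any of the four $A$-vertices gives $4$ edges.
\item $a=5$: $0$ edges.
\end{itemize}
Thus no complete odd-bipartite $5$-vertex $4$-graph has exactly three edges. This is the same parity fact used in \cref{lem:odd-bip-Fk-free}, since the five $s$-values sum to an even number. It settles Part 1 for all of $Q_0,\dots,Q_3$ at once.

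\textbf{Part 2: $Q_4$ and $Q_5$.} I would exhibit explicit $5$-sets whose induced subgraph has exactly three edges, checked directly from the edge lists.
\begin{itemize}
\item In $Q_5$, the $5$-set $\{0,1,2,3,5\}$ induces $0123,0125,0135$, which is $3$ edges.
\item In $Q_4$, the $5$-set $\{0,1,2,3,4\}$ induces $0123,0124,0234$, which is $3$ edges.
\end{itemize}
By the uniqueness of $T_3$ noted before the claim, each of these is an induced copy of $T_3$.

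The main obstacle is only the bookkeeping in the $5$-vertex edge count. The cleanest route is the parity argument: the number of odd $4$-subsets of a $5$-set equals the number of omitted vertices $v$ for which $|A|-[v\in A]$ is odd, and this is either $0$, $|A|$, or $5-|A|$. With $|A|\le 5$, the value $3$ would force $|A|=3$ with the omitted vertex in $A$ giving an even count. A direct check shows that this case yields $2$ edges, so three edges never occurs.
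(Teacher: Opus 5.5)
Your final argument is correct and is essentially the paper's. The corrected count shows that a $5$-vertex complete odd-bipartite $4$-graph has $0$, $2$, or $4$ edges (always an even number, by the parity observation you mention), which rules out $Q_0,\dots,Q_3$, and your explicit $5$-sets are valid; for $Q_5$ you use $\{0,1,2,3,5\}$, inducing $0123,0125,0135$, where the paper uses $\{0,2,3,4,5\}$. In a write-up, delete the first table, which wrongly assigns $3$ edges to $a=3$, together with the reasoning built on it about $Q_0,Q_1,Q_2$, since only the corrected table carries the proof.
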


\begin{proof}
It is enough to count the number of edges in the induced subgraphs obtained by deleting one vertex.  By \cref{FACT:six-vertex-odd-bipartite-types}, the $4$-graphs $Q_0,Q_1,Q_2,Q_3$ are complete odd-bipartite on six vertices, with part sizes $0|6$, $1|5$, $2|4$, and $3|3$, respectively.  After deleting one vertex, the resulting complete odd-bipartite $4$-graph on five vertices has part sizes $0|5$, $1|4$, or $2|3$, up to swapping the parts, and therefore has respectively $0$, $4$, or $2$ edges.  Thus none of $Q_0,Q_1,Q_2,Q_3$ contains an induced $T_3$.

For $Q_4$, deleting the vertex $5$ leaves the induced $4$-graph on $\{0,1,2,3,4\}$ with edge set $\{0123,0124,0234\}$, which is $T_3$.  For $Q_5$, deleting the vertex $1$ leaves the induced $4$-graph on $\{0,2,3,4,5\}$ with edge set $\{0234,0245,0345\}$, which has exactly three edges and hence is isomorphic to $T_3$.
\end{proof}

\begin{claim}\label{CLM:T3_unique}
Let $\mathcal G$ be a $4$-graph such that every induced $6$-vertex subgraph of $\mathcal G$ is isomorphic to one of $Q_0,Q_1,Q_2,Q_3,Q_4,Q_5$.  For every edge $X\in \mathcal G$, there is at most one vertex $u\in V(\mathcal G)\setminus X$ such that $\mathcal G[X\cup\{u\}]\cong T_3$.
\end{claim}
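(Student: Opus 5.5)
The plan is to argue by contradiction and reduce the claim to a finite check on the two exceptional graphs $Q_4$ and $Q_5$. Suppose $X\in\mathcal G$ and there are two distinct vertices $u,v\in V(\mathcal G)\setminus X$ with $\mathcal G[X\cup\{u\}]\cong T_3\cong\mathcal G[X\cup\{v\}]$. Set $S\coloneqq X\cup\{u,v\}$, a $6$-set. By hypothesis $\mathcal G[S]$ is isomorphic to one of $Q_0,\dots,Q_5$.

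First we narrow down which $Q_i$ can occur. The graph $\mathcal G[S]$ contains an induced copy of $T_3$, namely $\mathcal G[S\setminus\{v\}]$. By \cref{CLM:T3_facts}, this forces $\mathcal G[S]\cong Q_4$ or $\mathcal G[S]\cong Q_5$.

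Next we translate the configuration into a property of $Q_4$ or $Q_5$. In $\mathcal G[S]$ there are two distinct vertices, $v$ and $u$, whose deletion leaves an induced $T_3$. Moreover, the complement of this pair, $S\setminus\{u,v\}=X$, is an edge. So it suffices to show the following: in each of $Q_4$ and $Q_5$, if $W$ is the set of vertices $w$ with $Q_i-w$ having exactly three edges, then no $4$-set of the form $\{0,\dots,5\}\setminus\{w_1,w_2\}$ with $w_1,w_2\in W$ distinct is an edge. Since a $5$-vertex $4$-graph is isomorphic to $T_3$ exactly when it has three edges, $Q_i-w\cong T_3$ if and only if $d_{Q_i}(w)=|Q_i|-3$.

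The main work is this degree computation, which is routine.
\begin{itemize}
\item For $Q_4$, which has $7$ edges, the degrees are $(5,5,4,5,5,4)$. Hence $W=\{2,5\}$, and the complementary set $0134$ is not an edge of $Q_4$.
\item For $Q_5$, which has $9$ edges, the degrees are $(9,6,5,5,6,5)$. Hence $W=\{1,4\}$, and the complementary set $0235$ is not an edge of $Q_5$.
\end{itemize}

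In both cases we reach a contradiction, so at most one such $u$ exists. The only potential obstacle is correctly identifying $W$ in each graph, and the degree counts above settle it.
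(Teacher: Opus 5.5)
Your proof is correct and follows the same route as the paper: reduce via \cref{CLM:T3_facts} to $\mathcal G[S]\cong Q_4$ or $Q_5$, then finish with a finite check on these two graphs. The only difference is in that check. The paper rules out $Q_5$ by a maximum-degree bound ($\Delta(Q)\le 8$ versus $d_{Q_5}(0)=9$), and rules out $Q_4$ by showing that its two degree-four vertices $2,5$ would need codegree two. You instead handle both cases uniformly: you locate the vertices with $d(w)=|Q_i|-3$ and check that the complementary $4$-set ($0134$, resp.\ $0235$) is a non-edge. This is an equivalent verification, and your degree computations are accurate.
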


\begin{proof}
Suppose to the contrary that an edge $X=\{v_1,v_2,v_3,v_4\}$ has two distinct extensions $v_5$ and $v_6$ to induced copies of $T_3$.  Let $Q\coloneqq\mathcal G[\{v_1,v_2,v_3,v_4,v_5,v_6\}]$.
By \cref{CLM:T3_facts}, the $4$-graph $Q$ must be isomorphic to $Q_4$ or $Q_5$.

We first rule out $Q_5$.  In $Q$, a vertex among $v_1,v_2,v_3,v_4$ is incident with at most five edges not containing both $v_5$ and $v_6$, since the two induced copies of $T_3$ contribute at most $3+3-1$ such edges.  It is incident with at most three further edges containing both $v_5$ and $v_6$.  Hence its degree in $Q$ is at most $8$.  Each of $v_5$ and $v_6$ is incident with two edges in its corresponding copy of $T_3$, and with at most $\binom42$ further edges containing the pair $v_5v_6$, so these two vertices also have degree at most $8$.  Thus $\Delta(Q)\le 8$.  This is impossible for $Q_5$, because the vertex $0$ of $Q_5$ is contained in all its nine edges.

It remains to exclude $Q_4$.  Since $Q_4$ has seven edges, while the two induced copies $Q-v_6$ and $Q-v_5$ of $T_3$ have three edges each and share exactly the edge $X$, the $4$-graph $Q$ has exactly two edges containing the pair $v_5v_6$.  Each of $v_5$ and $v_6$ has degree two in its own induced $T_3$, and therefore has degree four in $Q$.  Thus, if $Q\cong Q_4$, the two vertices corresponding to $v_5$ and $v_6$ would be two degree-four vertices of $Q_4$ which are contained together in exactly two edges.

However, from $Q_4\coloneqq\{0123,0124,0135,0234,0345,1245,1345\}$, we get the degree sequence
\[
d_{Q_4}(0)=5,
\quad d_{Q_4}(1)=5,
\quad d_{Q_4}(2)=4,
\quad d_{Q_4}(3)=5,
\quad d_{Q_4}(4)=5,
\quad d_{Q_4}(5)=4.
\]
The only degree-four vertices are $2$ and $5$, and they are contained together in only one edge, namely $1245$.  This contradicts the conclusion of the preceding paragraph and proves the claim.
\end{proof}

Apply \cref{LEM:removal_lemma} with parameter $\delta/2$ to the family $\mathcal F_2$, and let $\gamma_2>0$ and $N_2$ be the constants obtained.  Choose $N$ so large that $n\binom n4\le \gamma_2n^6$ for all $n\ge N$.  Next apply \cref{LEM:removal_lemma} with parameter $\delta/2$ to the family $\mathcal F_1$, obtaining constants $\gamma_1>0$ and $N_1$.  Put $\varepsilon\coloneqq\gamma_1$, and take $N_0$ large enough that $N_0\ge \max\{N,N_1,N_2\}$.

Let $\mathcal H$ be a $4$-graph satisfying the assumption.  By the removal application to $\mathcal F_1$, after at most $(\delta/2)n^4$ edits we obtain a $4$-graph $\mathcal H'$ in which every induced $6$-vertex subgraph is isomorphic to one of $Q_0,Q_1,Q_2,Q_3,Q_4,Q_5$.

We first prove the explicit bound $\ind(T_3,\mathcal H')\le \frac13\binom n4$. 
Count pairs $(X,u)$ such that $X\in \mathcal H'$, $u\notin X$, and $\mathcal H'[X\cup\{u\}]\cong T_3$.  By \cref{CLM:T3_unique}, each edge $X$ has at most one such extension, so the number of such pairs is at most $|\mathcal H'|\le\binom n4$.  Every induced copy of $T_3$ has exactly three edges and therefore contributes three such pairs.  This proves the displayed bound.
By \cref{CLM:T3_facts}, every induced copy of $Q_4$ or $Q_5$ contains an induced copy of $T_3$.  Once such a copy of $T_3$ is fixed, there are at most $n$ choices for the sixth vertex, and consequently
\[
\ind(Q_4,\mathcal H')+\ind(Q_5,\mathcal H')\le n \cdot \ind(T_3,\mathcal H')\le n\tbinom n4\le \gamma_2 n^6.
\]
Every member of $\mathcal F_1$ has no induced copy in $\mathcal H'$, while $Q_4$ and $Q_5$ have at most $\gamma_2n^6$ induced copies.  Thus every member of $\mathcal F_2$ has at most $\gamma_2n^6$ induced copies in $\mathcal H'$.  A second application of \cref{LEM:removal_lemma}, now to $\mathcal F_2$, changes at most another $(\delta/2)n^4$ edges and gives a $4$-graph $\mathcal H''$ such that every induced $6$-vertex subgraph is isomorphic to one of $Q_0,Q_1,Q_2,Q_3$.  \cref{LEM:final_structure} implies that $\mathcal H''$ is complete odd-bipartite.  Since
\[
|\mathcal H\triangle \mathcal H''|
\le |\mathcal H\triangle \mathcal H'|+|\mathcal H'\triangle \mathcal H''|
\le \delta n^4,
\]
the proposition follows.
\end{proof}

\subsection[Proof of Theorem]{Proof of \texorpdfstring{\cref{THM:main_C_4k+2}}{Theorem}}\label{SEC:C10}

The exact flag algebra certificate for $C_{10}^4$ has two outputs needed here: the sharp density bound and the list of six-vertex types that can have positive density in a near-extremal sequence.

\begin{lemma}\label{LEM:density_C10}\label{LEM:no_6vertex_C10}
The following statements hold.
\begin{enumerate}[label=\textup{(\roman*)}]
\item We have $\pi(C_{10}^{4}) = 1/2$.
\item For every $\gamma > 0$, there exist $\varepsilon > 0$ and $N_0$ such that the following statement holds.  If $\mathcal H$ is a $C_{10}^{4}$-free $4$-graph on $n \ge N_0$ vertices with $|\mathcal H| \ge \left(1/2-\varepsilon\right)\binom{n}{4}$, then every $4$-graph on $6$ vertices other than the $4$-graphs $Q_0,Q_1,Q_2,Q_3,Q_4,Q_5$ has at most $\gamma n^6$ induced copies in $\mathcal H$.
\end{enumerate}
\end{lemma}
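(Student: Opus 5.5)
The plan is to derive both parts from a single exact flag algebra certificate of the form \eqref{EQ:flag_identity}, computed on $6$-vertex flags. First I fix the admissible theory. In the limit it will be enough to work with $4$-graphs in which $C_{10}^4$ and each of $\mathcal J_{5,0},\mathcal J_{6,0},\ldots,\mathcal J_{6,6}$ have density zero. By \cref{LEM:C10-zero-density-configs}, each of these configurations has limiting density zero in every $C_{10}^4$-free sequence. We may therefore declare every $6$-vertex $4$-graph containing one of them (and every $6$-vertex graph containing a homomorphic image of $C_{10}^4$ with at most six vertices) inadmissible. This changes densities only by $o(1)$.

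The SDP is then solved with objective $f=$ edge density, written as a linear combination of $6$-vertex flag densities. I expect it to return $u=1/2$. The numerical solution is rationalized, and the identity $1/2-f=\mathrm{SOS}+\sum_F c_F p(F,H)+o(1)$ is verified exactly with $c_F\ge0$. This gives $\pi(C_{10}^4)\le 1/2$. The matching lower bound $\pi(C_{10}^4)\ge1/2$ comes from the complete odd-bipartite construction. That construction is $C_{10}^4$-free by the parity argument of \cref{lem:odd-bip-Fk-free}: since $\gcd(4,10)=2$, all ten consecutive sums being odd is impossible. Also $b(n)\sim\frac12\binom n4$. This proves (i).

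For (ii), I read off from the certificate that every admissible $6$-vertex flag other than $Q_0,\ldots,Q_5$ has strictly positive slack $c_F>0$. Inadmissible flags already have density $o(1)$. Suppose (ii) fails. Then there are $\gamma>0$ and a sequence of $C_{10}^4$-free $\mathcal H_n$ with edge density at least $1/2-o(1)$, together with a fixed non-$Q_i$ flag $F$ satisfying $\ind(F,\mathcal H_n)\ge\gamma n^6$. By the identity, $c_F\,p(F,\mathcal H_n)\le 1/2-f(\mathcal H_n)+o(1)=o(1)$, which contradicts $c_F>0$. This argument is compactness-free: it even gives the explicit choice $\varepsilon=\min_F c_F\gamma/2$ for $n$ large.

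The main obstacle is computational. The SDP must be sharp, with $u$ exactly $1/2$ and not merely numerically close, and the zero-slack set must be no larger than $\{Q_0,\ldots,Q_5\}$. Obtaining a sharp bound usually requires forcing the known zero eigenvectors coming from the extremal construction. I would first try projecting onto the orthogonal complement of the kernel determined by the odd-bipartite limit, then rationalize and check positive semidefiniteness exactly. If $Q_4$ or $Q_5$ cannot be forced to positive slack, that is acceptable, since \cref{PROP:structure_from_six_vertex} already tolerates them.
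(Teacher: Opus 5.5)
Your proposal follows the paper's computer-assisted proof. It uses the same $6$-vertex SDP with the edge-density objective, in the theory forbidding $\mathcal J_{5,0},\mathcal J_{6,0},\ldots,\mathcal J_{6,6}$ as justified by \cref{LEM:C10-zero-density-configs}, the same odd-bipartite lower bound, and the same certificate whose zero-slack set is exactly $Q_0,\ldots,Q_5$. The only difference is cosmetic and equally valid: you get (ii) by evaluating the identity on finite graphs and absorbing the $o(1)$ densities of the excluded configurations, whereas the paper passes to a convergent subsequence.
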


\begin{proof}[Computer-assisted verification]
We run the flag algebra SDP at target size $6$ with objective function $d(e)$, the $4$-edge density, in the theory in which the configurations $\mathcal J_{5,0}, \mathcal J_{6,0},\ldots,\mathcal J_{6,6}$ are forbidden.  By \cref{LEM:C10-zero-density-configs}, these zero-density constraints hold in every $C_{10}^4$-free flag algebra limit, so the resulting upper bound applies to the $C_{10}^4$-free problem.

The exact rational certificate gives $d(e) \le 1/2$.
The complete odd-bipartite construction gives the matching lower bound, so the density assertion follows.

We also use the slack information in the same certificate.  There are $65$ admissible $6$-vertex base flags.  Among them, $59$ have positive slack and exactly six have zero slack.  The zero-slack types are precisely
$Q_0,Q_1,Q_2,Q_3,Q_4,Q_5$.
For comparison with the verification script and certificate, the corresponding script indices of these six types are
\[
\begin{array}{c|cccccc}
\text{type} & Q_0 & Q_1 & Q_2 & Q_3 & Q_4 & Q_5\\
\hline
\text{script index} & 0 & 64 & 62 & 46 & 57 & 63
\end{array}
\]
respectively.
By \cref{FACT:six-vertex-odd-bipartite-types}, the four types $Q_0,Q_1,Q_2,Q_3$ are precisely the complete odd-bipartite $6$-vertex types.
The remaining two zero-slack types, $Q_4$ and $Q_5$, do not embed into the complete odd-bipartite template.

We now explain how this gives the finite local-stability statement in \textup{(ii)}.  First, the configurations excluded in the SDP have limiting density zero in every $C_{10}^4$-free sequence by \cref{LEM:C10-zero-density-configs}; that is, each such configuration $J$ has $o(n^{|V(J)|})$ copies.  Every non-admissible $6$-vertex type contains one of $\mathcal J_{5,0},\mathcal J_{6,0},\ldots,\mathcal J_{6,6}$ as a not necessarily induced subgraph.  Extending a copy of such a forbidden configuration to a $6$-set, if necessary, shows that every induced $6$-vertex type containing it has limiting density zero.  Among the remaining admissible $6$-vertex types, the only zero-slack types are $Q_0,Q_1,Q_2,Q_3,Q_4,Q_5$.  Hence every other $6$-vertex type has limiting induced density zero in any $C_{10}^4$-free sequence whose edge density tends to $1/2$.

Finally, suppose that the $\varepsilon$-$\gamma$ assertion in \textup{(ii)} failed for some $\gamma>0$.  Then there would be a sequence of $C_{10}^4$-free $4$-graphs $\mathcal H_i$ with $|V(\mathcal H_i)|\to\infty$ and edge density tending to $1/2$, together with a fixed $6$-vertex type $Q\notin\{Q_0,\ldots,Q_5\}$, such that $\ind(Q,\mathcal H_i)>\gamma |V(\mathcal H_i)|^6$ along a subsequence.  Passing to a further convergent subsequence in the flag algebra sense, the previous paragraph forces the limiting induced density of $Q$ to be zero, while this lower bound keeps it bounded away from zero.  This contradiction proves \textup{(ii)}.
\end{proof}

The blowup results needed for the reduction from $C_{10}^4$ to longer cycles were recorded in \cref{SEC:Preliminaries}.

\begin{proof}[Proof of \cref{THM:main_C_4k+2}]
We first consider the case $k=2$.  \cref{LEM:density_C10}, together with the complete odd-bipartite lower-bound construction, gives $\pi(C_{10}^4)=1/2$.  It remains in this case to prove stability.  Fix $\delta>0$.  Apply \cref{PROP:structure_from_six_vertex} with parameter $\delta$, and let $\gamma>0$ and $N_1$ be the constants obtained.  Next apply \cref{LEM:no_6vertex_C10} with this value of $\gamma$, obtaining $\varepsilon>0$ and $N_2$.  Let $N_0\coloneqq\max\{N_1,N_2\}$.

Let $\mathcal H$ be a $C_{10}^4$-free $4$-graph satisfying the density assumption, with $n\ge N_0$.  By \cref{LEM:no_6vertex_C10}, every $4$-graph on $6$ vertices other than the $4$-graphs $Q_0,Q_1,Q_2,Q_3,Q_4,Q_5$ has at most $\gamma n^6$ induced copies in $\mathcal H$.  Therefore \cref{PROP:structure_from_six_vertex} transforms $\mathcal H$ into a complete odd-bipartite $4$-graph by adding and deleting at most $\delta n^4$ edges.  This proves the theorem for $k=2$.

Assume now that $k\ge3$ and put $m\coloneqq4k+2$ and $t\coloneqq k-1$.  The complete odd-bipartite construction is $C_m^4$-free because $4\nmid m$, so $\pi(C_m^4)\ge1/2$.

By \cref{LEM:cycle-blowup-embeddings}, $C_m^4\subseteq C_{10}^4[t]$.  \cref{LEM:blowup_lemma}\textup{(i)} and \cref{LEM:density_C10} give $\pi(C_m^4)\le \pi(C_{10}^4[t])=\pi(C_{10}^4)=1/2$, and the density assertion follows.

It remains to prove stability.  Fix $\delta>0$, and let $\varepsilon_{10}>0$ be the constant returned by the $k=2$ stability statement just proved, with parameter $\delta/2$.  Choose $0<\eta<\min\{\delta/2,\varepsilon_{10}/100\}$.
Apply \cref{LEM:blowup_lemma}\textup{(ii)} to $F=C_{10}^4$ and this value of $\eta$, and then choose $\varepsilon>0$ sufficiently small in terms of $\varepsilon_{10}$ and $\eta$.

Let $\mathcal H$ be a $C_m^4$-free $4$-graph on $n$ vertices, where $n$ is sufficiently large, and suppose that $|\mathcal H|\ge \left(\frac12-\varepsilon\right)\binom n4$.
Since $C_m^4\subseteq C_{10}^4[t]$, the $4$-graph $\mathcal H$ is $C_{10}^4[t]$-free.  By \cref{LEM:blowup_lemma}\textup{(ii)}, deleting at most $\eta n^4$ edges gives a $C_{10}^4$-free $4$-graph $\mathcal H_0$.  By the choice of the constants and by taking $n$ large, this $4$-graph still satisfies $|\mathcal H_0|\ge \left(\frac12-\varepsilon_{10}\right)\binom n4$. 
The $k=2$ stability statement just proved transforms $\mathcal H_0$ into a complete odd-bipartite $4$-graph by at most $(\delta/2)n^4$ further edits.  Hence $\mathcal H$ itself is within $\eta n^4+\frac{\delta}{2}n^4\le \delta n^4$ edits of a complete odd-bipartite $4$-graph.  This completes the proof of \cref{THM:main_C_4k+2}.
\end{proof}

\subsection[Proof of Theorem]{Proof of \texorpdfstring{\cref{THM:stabi_C6minus}}{Theorem}}\label{SEC:C6minus}

The corresponding certificate for $C_6^{4-}$ is cleaner: its zero-slack six-vertex types are exactly the four complete odd-bipartite types.

\begin{lemma}\label{LEM:density_C6minus}\label{LEM:no_6vertex_C6minus}
The following statements hold.
\begin{enumerate}[label=\textup{(\roman*)}]
\item We have $\pi(C_{6}^{4-}) = 1/2$.
\item For every $\gamma > 0$, there exist $\varepsilon > 0$ and $N_0$ such that the following statement holds.  If $\mathcal{H}$ is a $C_{6}^{4-}$-free $4$-graph on $n \ge N_0$ vertices with $|\mathcal{H}| \ge \left(\frac{1}{2}-\varepsilon\right)\binom{n}{4}$, then every $4$-graph on $6$ vertices other than the $4$-graphs $Q_0,Q_1,Q_2,Q_3$ has at most $\gamma n^6$ induced copies in $\mathcal{H}$.
\end{enumerate}
\end{lemma}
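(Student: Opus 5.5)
The plan mirrors the proof of \cref{LEM:density_C10}: a computer-assisted flag algebra certificate, followed by a compactness argument that converts slack information into the finite $\varepsilon$--$\gamma$ statement.

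\emph{Setting up the theory.} We work at target size $6$ in the theory of $C_6^{4-}$-free $4$-graphs. Since $C_6^{4-}$ itself has six vertices, admissibility can be imposed directly: a $6$-vertex $4$-graph is admissible exactly when it contains no (not necessarily induced) copy of $C_6^{4-}$. Non-admissible types then have density zero trivially, and no analogue of \cref{LEM:C10-zero-density-configs} is needed. To strengthen the SDP one may also forbid any small configurations $J$ that admit a homomorphism from $C_6^{4-}$. These still have density zero in every $C_6^{4-}$-free sequence, by the argument of \cref{LEM:C10-zero-density-configs}: \cref{LEM:blowup_lemma}\textup{(ii)} removes all copies of $J$ after deleting $o(n^4)$ edges, and a fixed edge lies in only $O(n^{|V(J)|-4})$ copies of $J$. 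We then solve the SDP with objective $d(e)$ and rationalize the solution, obtaining an exact identity of the form \eqref{EQ:flag_identity} with $u=1/2$.

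\emph{Part (i).} The identity gives $\pi(C_6^{4-})\le 1/2$. For the matching lower bound, \cref{lem:odd-bip-Fk-free} with $k=1$ shows that $\mathbb{B}_4^{\mathrm{odd}}(n)$ is $C_6^{4-}$-free, and $b(n)\sim\frac12\binom n4$.

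\emph{Part (ii).} We read off the slacks $c_F$ from the certificate and check that the zero-slack admissible $6$-vertex flags are exactly $Q_0,Q_1,Q_2,Q_3$. These must all be zero-slack, since they occur with positive density in $\mathbb{B}_4^{\mathrm{odd}}(n)$, which attains the bound. Note also that $Q_4$ and $Q_5$ must receive positive slack here; unlike the $C_{10}^4$ case, they should either contain $C_6^{4-}$ or be killed by the certificate.

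Given the slack list, the finite statement follows by compactness. Suppose it fails for some $\gamma>0$. Then there are $C_6^{4-}$-free $\mathcal H_i$ with $|V(\mathcal H_i)|\to\infty$ and edge density tending to $1/2$, together with a fixed type $Q\notin\{Q_0,\dots,Q_3\}$ satisfying $\ind(Q,\mathcal H_i)>\gamma|V(\mathcal H_i)|^6$. Passing to a convergent subsequence, the limit has density $u=1/2$, so every nonnegative term of \eqref{EQ:flag_identity} vanishes in the limit. In particular, every positive-slack flag has limiting density zero, as does every non-admissible flag. This contradicts the lower bound on the density of $Q$.

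\emph{Main obstacle.} The hardest step is the computational one: obtaining a certificate that is tight at exactly $1/2$ and whose zero-slack set is precisely $\{Q_0,Q_1,Q_2,Q_3\}$. Complementary slackness tends to leave spurious zero-slack flags, as happened with $Q_4$ and $Q_5$ for $C_{10}^4$. If that occurs, I would first add further forbidden homomorphic-image configurations, or extra $5$-vertex-type constraints, to push those flags to positive slack. Failing that, I would fall back on the $T_3$-counting reduction in the proof of \cref{PROP:structure_from_six_vertex}. The rationalization must then preserve both positive semidefiniteness and exact zero slack on the four odd-bipartite types.
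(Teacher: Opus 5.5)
Your proposal is correct and follows essentially the same route as the paper: a target-size-$6$ flag algebra certificate in the $C_6^{4-}$-free theory giving $d(e)\le 1/2$, the odd-bipartite construction for the matching lower bound, and the slack check that the only zero-slack admissible $6$-vertex types are $Q_0,Q_1,Q_2,Q_3$, which yields (ii) by the compactness argument you spell out (the paper states this last step more tersely). As a side remark, your optional extra forbidden homomorphic images are vacuous, since every pair of vertices of $C_6^{4-}$ lies in a common edge; and of the two spurious $C_{10}^4$ types, $Q_4$ actually contains $C_6^{4-}$, while $Q_5$ is admissible and is eliminated by positive slack.
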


\begin{proof}[Computer-assisted verification]
We run the flag algebra SDP at target size $6$ for the $C_6^{4-}$-free theory, where $C_6^{4-}$ is the $4$-graph on vertex set $\{1,\ldots,6\}$ with edge set $\{1234,2345,3456,1456,1256\}$.
The exact rational certificate gives $d(e)\le \frac12$.
The complete odd-bipartite construction gives the matching lower bound, proving \textup{(i)}.
For completeness, the Appendix gives a direct proof of the upper bound $d(e)\le \frac12$ by unpacking this part of the certificate into a six-vertex inequality.

The same certificate has $65$ admissible $6$-vertex base flags.  Among them, $61$ have positive slack and exactly four have zero slack.  The zero-slack types are precisely
$Q_0,Q_1,Q_2,Q_3$.
By \cref{FACT:six-vertex-odd-bipartite-types}, these are exactly the $6$-vertex complete odd-bipartite types.
For comparison with the verification script and certificate, the corresponding script indices of these four types are
\[
\begin{array}{c|cccc}
\text{type} & Q_0 & Q_1 & Q_2 & Q_3\\
\hline
\text{script index} & 0 & 64 & 61 & 46
\end{array}
\]
respectively.

Thus every other admissible $6$-vertex flag has positive slack.  Consequently, in every near-extremal $C_6^{4-}$-free sequence, the induced density of every other $6$-vertex type tends to zero.  This is exactly the $\varepsilon$-$\gamma$ formulation in \textup{(ii)}.
\end{proof}

\begin{proof}[Proof of \cref{THM:stabi_C6minus}]
Fix $\delta>0$ and let $\mathcal F\coloneqq\mathcal{F}_{6}^{0}\setminus\{Q_0,Q_1,Q_2,Q_3\}$.  
Apply the induced removal lemma, \cref{LEM:removal_lemma}, with $r=4$, $C=6$, the family $\mathcal F$, and parameter $\delta$.  This gives constants $\gamma>0$ and $N_1$.  Applying \cref{LEM:no_6vertex_C6minus} with this value of $\gamma$ gives constants $\varepsilon>0$ and $N_2$.

Let $\mathcal H$ be a $C_6^{4-}$-free $4$-graph on $n\ge \max\{N_1,N_2,6\}$ vertices such that $|\mathcal H|\ge \left(\frac12-\varepsilon\right)\binom n4$.
By \cref{LEM:no_6vertex_C6minus}, every member of $\mathcal F$ has at most $\gamma n^6$ induced copies in $\mathcal H$.  The induced removal lemma therefore allows us to edit at most $\delta n^4$ edges and obtain a $4$-graph $\mathcal H'$ in which every induced $6$-vertex subgraph is isomorphic to one of $Q_0,Q_1,Q_2,Q_3$.  \cref{LEM:final_structure} implies that $\mathcal H'$ is complete odd-bipartite.  Thus $\mathcal H$ can be transformed into a complete odd-bipartite $4$-graph by at most $\delta n^4$ edits.
\end{proof}

\section{Proof of \texorpdfstring{\cref{THM:main_C_4k+2minus}}{Theorem}}\label{SEC:C6minus_exact}

This section proves the exact Tur\'{a}n result for $C_{4k+2}^{4-}$.  We first record the notation and local colored configurations used in the proof, then prove the key local lemmas, and finally complete the argument through a minimum-degree reduction and a potential-function estimate.

\subsection{Preliminaries}\label{SUBSEC:exact-preliminaries}

Throughout this section, we work in the following two-colored setting.  A \emph{two-colored vertex set} is a vertex set equipped with a fixed bipartition $A\sqcup B$, and a \emph{two-colored hypergraph} is a hypergraph whose vertices are so colored.  If $F=(V_F,E_F,A_F,B_F)$ and $G=(V_G,E_G,A_G,B_G)$ are two-colored hypergraphs, then a \emph{color-preserving homomorphism} from $F$ to $G$ is a map $\varphi:V_F\to V_G$ such that $\varphi(A_F)\subseteq A_G$, $\varphi(B_F)\subseteq B_G$, and $\varphi(e)\in E_G$ for every edge $e\in E_F$.  In particular, such a homomorphism need not be injective.  In the colored setting, a \emph{labeled color-preserving copy} means an injective color-preserving homomorphism.  For uncolored hypergraphs, the same labeled convention is used whenever copies are counted; when only existence is asserted, the labeled and unlabeled distinction is immaterial.

For a two-colored vertex set $A\sqcup B$, define the \emph{complete odd-bipartite $3$-graph} $\mathsf O(A,B)$ and the \emph{complete even-bipartite $3$-graph} $\mathsf E(A,B)$ by
\begin{align*}
\mathsf O(A,B) & \coloneqq\left\{xyz\in\tbinom{A\cup B}{3}\colon |xyz\cap B|\equiv 1\pmod 2\right\}, \\
\mathsf E(A,B) & \coloneqq\left\{xyz\in\tbinom{A\cup B}{3}\colon |xyz\cap B|\equiv 0\pmod 2\right\}.
\end{align*}
Thus $\mathsf O(A,B)$ consists of triples of types $AAB$ and $BBB$, while $\mathsf E(A,B)$ consists of triples of types $AAA$ and $ABB$.

For a colored $3$-graph $G=(V,E,A,B)$, write $G^{\mathrm{sw}}\coloneqq(V,E,B,A)$ for the colored $3$-graph obtained by interchanging the two color classes.
The proof uses the following small colored configurations.
Let $\mathcal P\coloneqq\{P_1,P_2,P_1^{\mathrm{sw}},P_2^{\mathrm{sw}}\}$ and $\mathcal J\coloneqq\{J_1,J_2,J_1^{\mathrm{sw}},J_2^{\mathrm{sw}}\}$, where the colored $3$-graphs are defined as follows:
\[
\begin{array}{c|c|c|c|c}
\text{configuration} & V & E & A & B\\ \hline
P_1 & \{1,2,3,4,5\} & \{123,124,135\} & \{1,2,3,4\} & \{5\}\\
P_2 & \{1,2,3,4,5\} & \{123,124,135\} & \{3,4,5\} & \{1,2\}\\
P_1^{\mathrm{sw}} & \{1,2,3,4,5\} & \{123,124,135\} & \{5\} & \{1,2,3,4\}\\
P_2^{\mathrm{sw}} & \{1,2,3,4,5\} & \{123,124,135\} & \{1,2\} & \{3,4,5\}\\
J_1 & \{1,2,3,4\} & \{123,124\} & \{1,2,3\} & \{4\}\\
J_2 & \{1,2,3,4\} & \{123,134\} & \{1,2\} & \{3,4\}\\
J_1^{\mathrm{sw}} & \{1,2,3,4\} & \{123,124\} & \{4\} & \{1,2,3\}\\
J_2^{\mathrm{sw}} & \{1,2,3,4\} & \{123,134\} & \{3,4\} & \{1,2\}.
\end{array}
\]

The following table gives color-preserving homomorphisms from the $5$-vertex configurations in $\mathcal P$ to the $4$-vertex configurations in $\mathcal J$.  The last column records the images of the vertices $1,2,3,4,5$ under the homomorphism $\varphi$. 
\[
\begin{array}{c|c|c}
\text{source} & \text{target} & (\varphi(1),\varphi(2),\varphi(3),\varphi(4),\varphi(5))\\ \hline
P_1 & J_1 & (1,3,2,2,4)\\
P_2 & J_2 & (3,4,1,1,2)\\
P_1^{\mathrm{sw}} & J_1^{\mathrm{sw}} & (1,3,2,2,4)\\
P_2^{\mathrm{sw}} & J_2^{\mathrm{sw}} & (3,4,1,1,2).
\end{array}
\]

\begin{fact}\label{lem:P-to-J-homomorphisms}
Each member of $\mathcal P$ admits a color-preserving homomorphism to a member of $\mathcal J$.
\end{fact}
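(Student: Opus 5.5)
The statement is a finite check, so the plan is to verify directly that each map in the displayed table is a color-preserving homomorphism, using the definition in \cref{SUBSEC:exact-preliminaries}. For each of the four rows we must confirm two things. First, every edge of the source is mapped onto an edge of the target; since $\varphi$ need not be injective, the image of an edge must be a genuine $3$-set that belongs to $E$. Second, $\varphi$ sends $A$ into $A$ and $B$ into $B$.

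\emph{Row $P_1\to J_1$}, with $\varphi=(1,3,2,2,4)$.
\begin{itemize}
\item Edges: $123\mapsto\{1,3,2\}=123$, $124\mapsto\{1,3,2\}=123$ and $135\mapsto\{1,2,4\}=124$. All three images are edges of $J_1$.
\item Colors: $A_{P_1}=\{1,2,3,4\}$ is sent to $\{1,3,2\}\subseteq A_{J_1}$, and $B_{P_1}=\{5\}$ is sent to $\{4\}=B_{J_1}$.
\end{itemize}

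\emph{Row $P_2\to J_2$}, with $\varphi=(3,4,1,1,2)$.
\begin{itemize}
\item Edges: $123\mapsto 134$, $124\mapsto 134$ and $135\mapsto 123$. All three images are edges of $J_2$.
\item Colors: $A_{P_2}=\{3,4,5\}$ is sent to $\{1,2\}=A_{J_2}$, and $B_{P_2}=\{1,2\}$ is sent to $\{3,4\}=B_{J_2}$.
\end{itemize}

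\emph{The two swapped rows.} These need no new computation. The operation $G\mapsto G^{\mathrm{sw}}$ leaves the vertex and edge sets unchanged and interchanges the two color classes. Hence if $\varphi$ is a color-preserving homomorphism $F\to G$, the same $\varphi$ is a color-preserving homomorphism $F^{\mathrm{sw}}\to G^{\mathrm{sw}}$. Applying this to the two rows above gives the maps $P_1^{\mathrm{sw}}\to J_1^{\mathrm{sw}}$ and $P_2^{\mathrm{sw}}\to J_2^{\mathrm{sw}}$ listed in the table.

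There is no real obstacle here. The only point requiring care is that the images of the edges are genuine $3$-sets, that is, no edge has two of its vertices identified by $\varphi$. This holds because the only collision is $\varphi(3)=\varphi(4)$, and no edge of $P_1$ or $P_2$ contains both $3$ and $4$.
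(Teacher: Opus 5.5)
Your proposal is correct and matches the paper, which supports this fact only by the displayed table of maps $P_1\to J_1$, $P_2\to J_2$, $P_1^{\mathrm{sw}}\to J_1^{\mathrm{sw}}$, $P_2^{\mathrm{sw}}\to J_2^{\mathrm{sw}}$. Your check of edges and colors in each row is exactly that verification. Your remark that the swapped rows follow because $(\cdot)^{\mathrm{sw}}$ only exchanges the color classes explains why the paper can reuse the same two maps there.
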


\subsection{Tur\'{a}n theorem in colored \texorpdfstring{$3$}{3}-graphs}\label{SUBSEC:exact-key-lemmas}

The main result of this subsection is a stability theorem for dense balanced two-colored $3$-graphs.
It says that if such a two-colored $3$-graph contains few copies of each member of $\mathcal P$, then it is close to either the complete odd-bipartite $3$-graph $\mathsf O(A,B)$ or the complete even-bipartite $3$-graph $\mathsf E(A,B)$.

\begin{proposition}\label{lem:local-link-stability}
For every $\xi>0$, there exist constants $\delta>0$, $\zeta>0$, and $n_{\ref{lem:local-link-stability}}$ such that the following holds. Let $L$ be a two-colored $3$-graph on $n\ge n_{\ref{lem:local-link-stability}}$ vertices with color classes $A\sqcup B$, where $\bigl||A|-|B|\bigr|\le \delta n$. Assume that $|L|\ge \left(\frac12-\delta\right)\binom n3$, and that, for every $F\in\mathcal P$, the number of copies of $F$ in $L$ is at most $\zeta n^5$. Then $\min\Bigl\{|L\triangle \mathsf O(A,B)|,\ |L\triangle \mathsf E(A,B)|\Bigr\} \le \xi n^3$.
\end{proposition}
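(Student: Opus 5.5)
The plan is to reduce to a $\mathcal J$-free statement, which is rigid enough to analyse pair by pair. The first step is supersaturation in the colored setting. By \cref{lem:P-to-J-homomorphisms}, each $F\in\mathcal P$ admits a color-preserving homomorphism to some $J\in\mathcal J$ with preimages of size at most $2$, so $F$ is contained, color-preservingly, in the $2$-blowup $J[2]$. A colored version of Erd\H{o}s' supersaturation (\cref{LEM:blowup_lemma}\textup{(i)}), applied with $A$ and $B$ both of linear size (guaranteed by the balance assumption), gives the following. If $L$ had $\Omega(n^4)$ labeled color-preserving copies of some $J\in\mathcal J$, it would contain $\Omega(n^{8})$ copies of $J[2]$, and hence $\Omega(n^5)$ copies of the corresponding $F$. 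Choosing $\zeta$ small therefore forces $o(n^4)$ copies of each $J$. A colored hypergraph removal lemma (the colored analogue of \cref{LEM:blowup_lemma}\textup{(ii)}) then deletes at most $\xi n^3/4$ edges to obtain a $\mathcal J$-free colored $3$-graph $L'$ with $|L'|\ge(\frac12-2\delta)\binom n3$. It thus suffices to prove the statement for $\mathcal J$-free $L'$.

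Next we read off what $\mathcal J$-freeness means for each pair. $J_1$ says an $AA$ pair cannot lie both in an $AAA$ edge and in an $AAB$ edge, and $J_1^{\mathrm{sw}}$ says the same for $BB$ pairs with the colors swapped. $J_2$ and $J_2^{\mathrm{sw}}$ say an $AB$ pair cannot have third vertices in both $A$ and $B$. Hence every pair $xy$ has all its common neighbours in a single class, which we denote $\sigma(xy)\in\{0,1\}$ (encoding $A=0$, $B=1$; write $c(v)$ for the color of $v$). In particular the codegree of $xy$ is at most $\max\{|A|,|B|\}\le(\frac12+\delta)n$. Summing codegrees gives $3|L'|\le\binom n2(\frac12+\delta)n$, which is essentially $\frac12\binom n3$. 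Comparing with the density lower bound, all but $O(\sqrt\delta)n^2$ pairs are \emph{full}: they are adjacent to all but $O(\sqrt\delta)n$ vertices of the class $\sigma(xy)$.

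The combinatorial heart is a parity normalisation. Put $\tau(xy)\coloneqq\sigma(xy)+c(x)+c(y)\pmod 2$. Take a triple $xyz$ whose three pairs are all full and whose adjacency to each pair is "typical"; all but $O(\sqrt\delta)n^3$ triples are of this kind. For such a triple, $xyz\in L'$ holds if and only if $\sigma(xy)=c(z)$, and likewise for the other two pairs. So these three equalities are simultaneously true or simultaneously false, that is, $\sigma(xy)+c(z)=\sigma(xz)+c(y)$. This rewrites as $\tau(xy)=\tau(xz)$. Consequently $\tau(xy)=\tau(xz)=\tau(yz)$ on almost all triples, so $\tau$ is constant on all but $O(\sqrt\delta)n^2$ pairs. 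This follows from a simple averaging or connectivity argument on the auxiliary $2$-coloring of $\binom{V}{2}$: a non-constant coloring with both classes of size $\Omega(n^2)$ has $\Omega(n^3)$ bichromatic cherries.

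It remains to identify the two outcomes. If $\tau\equiv0$ on almost all pairs, then $xyz\in L'$ essentially iff $c(x)+c(y)+c(z)\equiv0$, which is $|xyz\cap B|$ even, so $L'$ is close to $\mathsf E(A,B)$. If $\tau\equiv1$, the same computation gives closeness to $\mathsf O(A,B)$. Adding back the removal step yields the bound $\xi n^3$ for suitably small $\delta,\zeta$. I expect the main obstacle to be the first step: making the colored supersaturation and removal rigorous, including the counting of labeled color-preserving copies and the requirement that both color classes are linear. The error bookkeeping in the "full pairs" step is routine once $\delta\ll\xi^2$.
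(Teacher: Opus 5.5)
Your proof is correct. It shares the paper's reduction step but handles the $\mathcal J$-free case by a genuinely different argument.

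\textbf{The reduction.} Colored supersaturation from $\mathcal P$ to $\mathcal J$ via the homomorphisms of \cref{lem:P-to-J-homomorphisms}, followed by colored removal to reach a $\mathcal J$-free $L'$, is exactly what the paper does. One small correction: what you need there is the counting form of Erd\H{o}s's supersaturation (many copies of $J$ give many copies of $J[2]$), not the density statement \cref{LEM:blowup_lemma}\textup{(i)}.

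\textbf{The $\mathcal J$-free case.} The paper's \cref{lem:J-free-local-link-stability} proceeds as follows:
\begin{enumerate}[label=\textup{(\arabic*)}]
\item It labels each good pair by the class containing its link.
\item It uses $\mathcal J$-freeness a second time, through a pigeonhole on common neighbourhoods, to show that a good same-color pair and a good mixed pair sharing a vertex always get different labels (\cref{clm:label-propagation}).
\item It cleans each color class at the vertex level to obtain common labels $s_A,s_B$.
\item It forces $s_A=s_B$ using a single good mixed pair.
\end{enumerate}

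You use $\mathcal J$-freeness only once, to put each pair's link in one class. Your normalized label $\tau(xy)=\sigma(xy)+c(x)+c(y)$ then replaces the whole same-color/mixed case split with one identity. On a typical triple, membership is decided identically by each of its three full pairs, so $\tau$ agrees on the pairs of almost every triple. A cherry count then makes $\tau$ almost constant, and $\tau\equiv1$ versus $\tau\equiv0$ is exactly $\mathsf O(A,B)$ versus $\mathsf E(A,B)$.

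Your route is more uniform and makes the dichotomy transparent. The paper's route gives consistency for every pair of good pairs sharing a vertex, so its cleaning only counts non-good pairs rather than inconsistent triples.

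\textbf{Bookkeeping fix.} If the removal step may delete up to $\xi n^3/4$ edges, you cannot conclude $|L'|\ge(\frac12-2\delta)\binom n3$ when $\delta\ll\xi^2$. Moreover, a density deficit of order $\xi$ would only give closeness of order $\sqrt{\xi}\,n^3$ in your argument. Instead, run the removal lemma with deletion parameter of order $\delta$, so that $\mu$ depends on $\delta$ and $\zeta$ depends on $\mu$. This is how the paper orders its constants, with $\theta\ll\delta_1$.
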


We first prove the stability conclusion in the special case where the two-colored $3$-graph is already $\mathcal J$-free.

\begin{lemma}\label{lem:J-free-local-link-stability}
For every $\xi_0>0$, there exist $\beta>0$ and $n_{\mathcal J}$ such that the following holds.
Let $L$ be a $\mathcal J$-free two-colored $3$-graph on $n\ge n_{\mathcal J}$ vertices with color classes $A\sqcup B$.
If $\bigl||A|-|B|\bigr|\le \beta n$ and $|L|\ge \left(\frac12-\beta\right)\binom n3$, then $\min\Bigl\{|L\triangle \mathsf O(A,B)|,\ |L\triangle \mathsf E(A,B)|\Bigr\} \le \xi_0 n^3$. 
\end{lemma}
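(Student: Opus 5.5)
The plan is to exploit the fact that $\mathcal J$-freeness is a purely local, pairwise condition. Reading off the four configurations in $\mathcal J$: $J_1$ forbids an $AA$-pair whose link contains both an $A$-vertex and a $B$-vertex, $J_1^{\mathrm{sw}}$ forbids the same for a $BB$-pair, and $J_2,J_2^{\mathrm{sw}}$ forbid it for an $AB$-pair. Hence in a $\mathcal J$-free $L$, for every pair $xy$ the common neighbourhood $N_L(xy)\coloneqq\{z\colon xyz\in L\}$ lies entirely inside $A$ or entirely inside $B$. So every pair with nonempty link gets a well-defined side $\sigma(xy)\in\{A,B\}$, and $|N_L(xy)|\le\max\{|A|,|B|\}\le(1/2+\beta)n$.

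\emph{Step 1 (almost all pairs are nearly saturated).} Double count: $3|L|=\sum_{xy}|N_L(xy)|$. The lower bound $|L|\ge(1/2-\beta)\binom n3$ gives $\sum_{xy}|N_L(xy)|\ge(1/2-\beta)(n-2)\binom n2$. Each term is at most $(1/2+\beta)n$, so by Markov's inequality all but $O(\sqrt\beta)n^2$ pairs are \emph{good}: they satisfy $|N_L(xy)|\ge(1/2-\sqrt\beta)n$. A good pair is therefore adjacent to all but $O(\sqrt\beta)n$ vertices of its side $\sigma(xy)$.

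\emph{Step 2 (a parity invariant on pairs).} For a pair $xy$, define
\[
g(xy)\coloneqq[\sigma(xy)=B]+[x\in B]+[y\in B]\pmod 2 .
\]
A direct check shows that $g\equiv1$ on all pairs of $\mathsf O(A,B)$ and $g\equiv0$ on all pairs of $\mathsf E(A,B)$. The key observation is that if $xyz\in L$, then $z$ lies on the side $\sigma(xy)$, and likewise for the other two pairs. Hence $g(xy)=g(xz)=g(yz)\equiv|xyz\cap B|\pmod 2$. In other words, every edge of $L$ forms a $g$-monochromatic triangle, and the colour of that triangle decides whether the edge is of $\mathsf O$-type or $\mathsf E$-type.

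\emph{Step 3 (propagating $g$; the main obstacle).} I must show that $g$ is constant on all but $o(n^2)$ pairs. Fix a vertex $x$ lying in few bad pairs. For a good pair $xy$, almost every $z\in\sigma(xy)$ gives an edge $xyz$, so $g(xz)=g(xy)$ for almost all $z$ on that side. Thus $g(x\cdot)$ is nearly constant on at least one side, say $S_x$. To reach the other side, pick a good pair $xz$ with $z\in S_x$ whose side $\sigma(xz)$ is the opposite class. If no such pair exists, then all pairs $xz$ with $z\in S_x$ have side $S_x$. A second counting argument then shows that too few triples through $x$ are present, which contradicts the degree condition; here the balance $\bigl||A|-|B|\bigr|\le\beta n$ is needed to rule out everything collapsing into one class. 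So $g(x\cdot)$ is nearly constant, with value $c_x$. Finally, two typical vertices $x,x'$ share a good pair $xx'$, so $c_x=g(xx')=c_{x'}$, and $g$ is globally almost constant, equal to some $c$. I expect the bookkeeping of this chaining argument, keeping every exceptional set of size $O(\beta^{1/4})n$, to be the hardest part. If it becomes awkward, the fallback is a cleaner averaging: count monochromatic triangles of each $g$-colour through typical vertices.

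\emph{Step 4 (conclusion).} Every edge of $L$ containing a pair on which $g\ne c$ lies in one of $o(n^3)$ triples. All other edges lie in $\mathsf O(A,B)$ if $c=1$ and in $\mathsf E(A,B)$ if $c=0$. Thus $|L\setminus\mathsf O|$ (respectively $|L\setminus\mathsf E|$) is $o(n^3)$. Since balance gives $|\mathsf O(A,B)|,|\mathsf E(A,B)|=(1/2+O(\beta))\binom n3$ and $|L|\ge(1/2-\beta)\binom n3$, the symmetric difference is at most $\xi_0n^3$ once $\beta$ is chosen small in terms of $\xi_0$ and $n$ is large.
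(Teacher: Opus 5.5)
Your invariant $g$ is a genuinely different and neat device compared with the paper's labels $\tau$. The identity $g(xy)=g(xz)=g(yz)\equiv|xyz\cap B|\pmod 2$ for $xyz\in L$ is correct, and Steps~1, 2 and~4 go through. In fact, the paper's label-propagation claim is, in your language, exactly the statement $g(xy)=g(xb)$ for a good same-colour pair $xy$ and a good mixed pair $xb$.

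\textbf{The gap is in Step~3.} Consider the branch where no good pair $xz$ with $z\in S_x$ has side opposite to $S_x$. You assert that a degree count then gives a contradiction, but this branch is not contradictory. It occurs for every $x\in A$ when $L=\mathsf E(A,B)$, and for every $x\in B$ when $L=\mathsf O(A,B)$. There every pair $xz$ has its link inside the colour class of $z$. So the link graph of $x$ is essentially $\binom{A}{2}\cup\binom{B}{2}$, which has $(1+o(1))n^2/4$ edges, exactly the typical vertex degree. These templates satisfy all hypotheses of the lemma. Hence no counting argument, with or without the balance assumption, can exclude this case, and your step fails on half the vertices of the extremal examples.

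The repair is structural rather than numerical. Write $\mathrm{cl}(z)$ for the colour class of $z$ and put $t_x(z)\coloneqq[\sigma(xz)\neq\mathrm{cl}(z)]$. Then $g(xz)=[x\in B]+t_x(z)$, and an edge $xyz\in L$ forces $t_x(y)=t_x(z)$.

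Suppose two good pairs $xy,xy'$ at a vertex $x$ lying in few bad pairs have $t_x(y)=0$ and $t_x(y')=1$.
\begin{itemize}
\item If $\sigma(xy)=\sigma(xy')$, the links $N_L(xy)$ and $N_L(xy')$ lie in one class. Each has size at least $(\frac12-O(\sqrt\beta))n$, while the class has size at most $(\frac12+\beta)n$. So they meet in a vertex $w$, which would have both $t_x(w)=0$ and $t_x(w)=1$.
\item Otherwise $\sigma(xy)=\mathrm{cl}(y)=\mathrm{cl}(y')$; call this class $S$. Any $z\in N_L(xy')$ with $xz$ good has $t_x(z)=1$, and since $z$ lies in the other class, $\sigma(xz)=S$. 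Then $N_L(xz)$ and $N_L(xy)$ are two large subsets of $S$ carrying different $t_x$-values, again a contradiction.
\end{itemize}

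Thus $g$ is exactly constant on the good pairs at each typical vertex. In your problematic branch the constant is simply $[x\in B]$. Your global step then finishes the proof, chaining through a third typical vertex when $xx'$ itself is bad. Alternatively, use the paper's observation that two linear-size groups of typical vertices with different constants would force $\Omega(n^2)$ bad pairs.
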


\begin{proof}
Let $V\coloneqq V(L)$.
Let $0<\beta\ll\eta\ll\xi_0$, and suppose that $L$ is a $\mathcal J$-free two-colored $3$-graph with color classes $A\sqcup B$, satisfying $\bigl||A|-|B|\bigr|\le \beta n$ and $|L|\ge \left(\frac12-\beta\right)\binom n3$.
For a pair $e$, write $N_A(e)\coloneqq N_L(e)\cap A$ and $N_B(e)\coloneqq N_L(e)\cap B$. Since $L$ is $\mathcal J$-free, every pair has its link contained in one color class: $N_A(e)=\varnothing$ or $N_B(e)=\varnothing$.
Indeed, for an $AA$-pair this is exactly the exclusion of $J_1$, for a $BB$-pair it is the exclusion of $J_1^{\mathrm{sw}}$, and for a mixed pair it follows from the exclusions of $J_2$ and $J_2^{\mathrm{sw}}$.

Call a pair $e$ \emph{good} if $d_L(e)\ge (1/2-\eta)n$.

\begin{claim}\label{clm:many-good-pairs}
All but $2\beta\eta^{-1}n^2$ pairs are good.
\end{claim}

\begin{proof}
Since every pair has degree at most $\max\{|A|, |B|\} \le (1/2+\beta)n$ (by the previous paragraph), and taking $n_{\mathcal J}\ge\beta^{-1}$, double counting gives
\[
\begin{aligned}
\sum_{e\in\binom V2}\left(\left(\frac{1}{2}+\beta\right)n-d_L(e)\right)
=\left(\frac{1}{2}+\beta\right)n\binom n2-3|L| 
\le \left(\frac{1}{2}+\beta\right)n\binom n2-3\left(\frac{1}{2}-\beta\right)\binom n3 
\le 2\beta n^3.
\end{aligned}
\]
If $M$ is the number of non-good pairs, then each non-good pair $e$ contributes at least $\eta n$ to the last sum, because $d_L(e)<(\frac{1}{2}-\eta)n$.
Thus $M\eta n\le 2\beta n^3$, so $M\le 2\beta\eta^{-1}n^2$.
\end{proof}

For a good pair $e$, let $\tau(e)\in\{A,B\}$ be the unique color class containing its link.
Then $N_L(e)\subseteq \tau(e)$ and, by goodness, $|N_L(e)|=d_L(e)\ge(\frac{1}{2}-\eta)n$.
On the other hand, the balance assumption gives $|\tau(e)|\le(\frac{1}{2}+\beta)n$.
Therefore
\[
|\tau(e)\setminus N_L(e)| \le |\tau(e)|-|N_L(e)| \le (\beta+\eta)n.
\]

\begin{claim}\label{clm:label-propagation}
Let $xy$ be a good same-color pair.  Every good mixed pair sharing $x$ or $y$ has label different from $\tau(xy)$.
\end{claim}

\begin{proof}
By symmetry, suppose $x,y\in A$.  If $\tau(xy)=B$ and some good mixed pair $xb$ has label $B$, then
\[
|N_L(xy)\cap B|+|N_L(xb)\cap B|
\ge 2\left(\frac{1}{2}-\eta\right)n>|B|
\]
for $\eta,\beta$ sufficiently small.  Hence there is $c\in B$ such that $xyc,xbc\in L$, which gives a copy of $J_2$, a contradiction.  Thus every good mixed pair $xb$ has label $A$.  The same argument applies to $yb$.

If $\tau(xy)=A$ and some good mixed pair $xb$ has label $A$, then
\[
|N_L(xy)\cap A|+|N_L(xb)\cap A|
\ge 2\left(\frac{1}{2}-\eta\right)n>|A|,
\]
so there is $a\in A$ such that $xya,xba\in L$, giving a copy of $J_1$.  This is again impossible.  The proof with $A$ and $B$ interchanged is identical.
\end{proof}

\begin{claim}\label{clm:same-color-common-label}
There are subsets $A_0\subseteq A$ and $B_0\subseteq B$ and labels $s_A,s_B\in\{A,B\}$ such that
\[
|A\setminus A_0|+|B\setminus B_0|=O(\beta\eta^{-2}n+\eta n),
\]
every good pair in $\binom{A_0}{2}$ has label $s_A$, and every good pair in $\binom{B_0}{2}$ has label $s_B$.
\end{claim}

\begin{proof}
We prove the assertion for $A$; the proof for $B$ is the same.  Let $A'$ be the set of vertices $x\in A$ incident with at most $\eta n$ non-good pairs inside $A$.  By \cref{clm:many-good-pairs}, we have $|A\setminus A'|\eta n\le 2\cdot 2\beta\eta^{-1}n^2$, and hence $|A\setminus A'|=O(\beta\eta^{-2}n)$.
For every $x\in A'$, the vertex $x$ is incident with a good pair inside $A$.

If $x\in A'$ is incident with good $AA$-pairs of both labels, then \cref{clm:label-propagation} gives incompatible labels for every good mixed pair from $x$ to $B$.  Hence every pair $xb$ with $b\in B$ is non-good.  There are at most $2\beta\eta^{-1}n^2$ non-good pairs altogether, so the number of such vertices is $O(\beta\eta^{-1}n)$, which is $O(\beta\eta^{-2}n)$.  After deleting them from $A'$, every remaining vertex $x$ has a well-defined label $\lambda_A(x)$, namely the common label of the good $AA$-pairs incident with $x$.

Let $A_A$ and $A_B$ be the remaining vertices with $\lambda_A(x)=A$ and $\lambda_A(x)=B$, respectively.  If both sets had size at least $\eta n$, then every pair crossing $A_A$ and $A_B$ would be non-good, since a good crossing pair would have to have both labels.  This would give at least $\eta^2n^2$ non-good pairs, contradicting \cref{clm:many-good-pairs} because $\beta\ll\eta^3$.  Thus one of $A_A,A_B$ has size less than $\eta n$.  Delete this smaller set and call the remaining set $A_0$; its common label is $s_A$.
\end{proof}

\begin{claim}\label{clm:mixed-labels-and-template}
There is a label $s\in\{A,B\}$ such that all but $O(\eta n^2)$ pairs have the following labels: same-color pairs have label $s$, and mixed pairs have the other label.  Consequently, $\min\Bigl\{|L\triangle \mathsf O(A,B)|,\ |L\triangle \mathsf E(A,B)|\Bigr\}=O(\eta n^3)$. 
\end{claim}

\begin{proof}
Let $A_0,B_0,s_A,s_B$ be as in \cref{clm:same-color-common-label}.  All pairs meeting $(A\setminus A_0)\cup(B\setminus B_0)$, together with all non-good pairs, account for only $O(\eta n^2)$ pairs, using $\beta\ll\eta^3$.  Since $|A_0||B_0|\ge n^2/5$ for $n$ large and $\eta$ small, there is at least one good mixed pair between $A_0$ and $B_0$.

Consider a good mixed pair $ab$ with $a\in A_0$ and $b\in B_0$.  Since $a$ is incident with a good $AA$-pair of label $s_A$, \cref{clm:label-propagation} forces $\tau(ab)\ne s_A$.  Similarly, since $b$ is incident with a good $BB$-pair of label $s_B$, it forces $\tau(ab)\ne s_B$.  Applying this to one good mixed pair between $A_0$ and $B_0$ gives $s_A=s_B$; call this common label $s$.  It follows that all good mixed pairs between $A_0$ and $B_0$ have the label different from $s$.

Thus all but $O(\eta n^2)$ pairs have the asserted labels.  If $s=B$, let $\Pi_s=\mathsf O(A,B)$, and if $s=A$, let $\Pi_s=\mathsf E(A,B)$.  Every edge of $L\setminus\Pi_s$ contains a pair which is either non-good or has the wrong label, so $|L\setminus\Pi_s|=O(\eta n^3)$.

Conversely, apart from triples containing a non-good or wrongly labeled pair, a triple of $\Pi_s$ can be missing only because one of its correctly labeled good pairs misses the third vertex.  For each good pair $e$, the number of such missed vertices is at most $(\beta+\eta)n=O(\eta n)$.  Summing over all pairs gives $|\Pi_s\setminus L|=O(\eta n^3)$.
\end{proof}

Choosing $\eta\ll\xi_0$ and then $\beta\ll\eta^3$, and taking $n$ sufficiently large, \cref{clm:mixed-labels-and-template} proves the lemma.
\end{proof}

We now prove \cref{lem:local-link-stability}. 

\begin{proof}[Proof of \cref{lem:local-link-stability}]
Let $V\coloneqq V(L)$.
The goal is to remove a small number of edges from $L$ to obtain a $\mathcal J$-free two-colored $3$-graph, and then apply \cref{lem:J-free-local-link-stability}.
We return to the original $3$-graph $L$. Apply \cref{lem:J-free-local-link-stability} with $\xi_0=\xi/2$, and let $\delta_1>0$ and $n_0$ be the corresponding constants. Choose $0<\theta\ll\min\{\xi,\delta_1\}$.
By the standard colored removal consequence of the hypergraph removal lemma~\cite{RS09}, there exist $\mu>0$ and $n_1$ such that, whenever a two-colored $3$-graph on $n\ge n_1$ vertices contains at most $\mu n^4$ labeled color-preserving copies of each member of $\mathcal J$, deleting at most $\theta n^3$ edges makes it $\mathcal J$-free.
For this value of $\mu$, Erd\H{o}s's hypergraph K\H{o}v\'ari--S\'os--Tur\'an supersaturation theorem~\cite{Erdos67a}, applied in this fixed colored partite setting to the color-preserving homomorphisms listed in \cref{lem:P-to-J-homomorphisms}, gives constants $\zeta_1>0$ and $n_2$ such that if some $J\in\mathcal J$ has more than $\mu n^4$ labeled color-preserving copies in $L$, then the corresponding configuration $F\in\mathcal P$ has more than $\zeta_1n^5$ labeled color-preserving copies in $L$.
Finally choose $\zeta\le\zeta_1$ and $\delta\ll\delta_1$.

\begin{claim}\label{clm:few-J-copies}
Every member of $\mathcal J$ has at most $\mu n^4$ labeled color-preserving copies in $L$.
\end{claim}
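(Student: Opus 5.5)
This follows directly from the way the constants were chosen, so I would argue by contradiction. Suppose some $J\in\mathcal J$ has more than $\mu n^4$ labeled color-preserving copies in $L$. Let $F\in\mathcal P$ be the configuration matched to $J$ by \cref{lem:P-to-J-homomorphisms}, so that $\varphi\colon F\to J$ is the listed color-preserving homomorphism. By the choice of $\zeta_1$ and $n_2$, for $n\ge n_2$ the configuration $F$ has more than $\zeta_1 n^5$ labeled color-preserving copies in $L$. On the other hand, the hypothesis of \cref{lem:local-link-stability} gives at most $\zeta n^5\le\zeta_1 n^5$ copies of $F$. This is a contradiction once we take $n_{\ref{lem:local-link-stability}}\ge\max\{n_0,n_1,n_2\}$.

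The point needing care is the supersaturation step, i.e.\ why many copies of $J$ force many copies of $F$. Each $\varphi$ in the table identifies exactly the two vertices $3,4$ of $F$ and is injective otherwise. So $F$ embeds color-preservingly into the blowup of $J$ in which the single vertex $\varphi(3)=\varphi(4)$ is doubled.

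I would then argue as follows. Fix an injective labeled copy $\psi$ of $J$ restricted to the vertices other than $w\coloneqq\varphi(3)$. The admissible choices of $\psi(w)$, namely vertices of the correct color class completing the copy, form a set $S_\psi$. The total $\sum_\psi|S_\psi|$ is the number of labeled copies of $J$, which exceeds $\mu n^4$. Since there are at most $n^3$ choices of $\psi$, convexity (Cauchy--Schwarz) gives
\[
\sum_\psi |S_\psi|(|S_\psi|-1)\ \ge\ \frac{(\mu n^4)^2}{n^3}-\mu n^4\ \ge\ \frac{\mu^2}{2}n^5
\]
for large $n$.

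Each ordered pair of distinct elements of $S_\psi$, used as the images of $3$ and $4$, gives an injective color-preserving copy of $F$. Indeed, every edge of $F$ maps under $\varphi$ to an edge of $J$, and the two vertices $3,4$ receive the same color as $w$. Hence $F$ has at least $\tfrac{\mu^2}{2}n^5$ labeled copies, so the statement holds with $\zeta_1=\mu^2/2$. Before the contradiction is complete, I would check this explicitly against the four rows of the table, confirming that the edges $123,124$ of $F$ both map onto the edge through $w$ and that the colors agree.
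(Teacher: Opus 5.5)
Your proof is correct, and its outer structure is exactly the paper's. Assuming more than $\mu n^4$ labeled copies of some $J\in\mathcal J$, the choice of $\zeta_1$ forces more than $\zeta_1 n^5$ labeled copies of the matched $F\in\mathcal P$, which contradicts the hypothesis of the proposition since $\zeta\le\zeta_1$.

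The difference is in the supersaturation input. The paper imports it as a black box from Erd\H{o}s's hypergraph K\H{o}v\'ari--S\'os--Tur\'an supersaturation theorem in the colored partite setting; that citation would handle any color-preserving homomorphism $F\to J$. You instead use the specific shape of the four homomorphisms. Each one merges only the vertices $3,4$ of $F$ onto a single vertex $w$ of the same color, and these two vertices share no edge (automatic for a homomorphism into a $3$-graph). So $F$ embeds into the blowup of $J$ in which only $w$ is doubled. A single Cauchy--Schwarz count over the at most $n^3$ partial copies $\psi$ then gives the bound with the explicit constant $\zeta_1=\mu^2/2$. This makes the step elementary and quantitative, at the price of being tailored to this table.

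Two cosmetic points. In the displayed inequality, apply convexity to $X\coloneqq\sum_\psi|S_\psi|>\mu n^4$ and note that $X^2/n^3-X$ is increasing for $X\ge n^3/2$ before substituting $\mu n^4$. In your final check, all three edges of $F$ (including $135$) map to edges of $J$ through $w$, not just $123$ and $124$. This is harmless: you only need that each edge of $F$ contains at most one of $3,4$, and that $x,y\in S_\psi$ each complete $\psi$ to a full copy of $J$.
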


\begin{proof}
Suppose that some $J\in\mathcal J$ has more than $\mu n^4$ labeled color-preserving copies in $L$.
By the preceding supersaturation statement, the corresponding configuration $F\in\mathcal P$ has more than $\zeta_1n^5$ labeled color-preserving copies in $L$.
Since $\zeta\le\zeta_1$, this contradicts the assumption of the proposition that every member of $\mathcal P$ has at most $\zeta n^5$ labeled color-preserving copies in $L$.
\end{proof}

Take $n_{\ref{lem:local-link-stability}}\ge\max\{n_0,n_1,n_2\}$.
By \cref{clm:few-J-copies} and the colored removal statement, there is a $\mathcal J$-free two-colored $3$-graph $L'$ on the same vertex set after deleting at most $\theta n^3$ edges from $L$.
Since $\delta$ and $\theta$ were chosen sufficiently small compared with $\delta_1$, we have $|L'|\ge \left(\frac12-\delta_1\right)\binom n3$ for all sufficiently large $n$. Hence $L'$ is within $(\xi/2)n^3$ edges of one of $\mathsf O(A,B)$ and $\mathsf E(A,B)$. Since $L$ and $L'$ differ by at most $\theta n^3$ edges and $\theta\ll\xi$, the $3$-graph $L$ is within $\xi n^3$ edges of the same $3$-graph. This proves the proposition.
\end{proof}

\subsection{Proof of \texorpdfstring{\cref{THM:main_C_4k+2minus}}{Theorem}}\label{SUBSEC:exact-main-proof}

We now complete the proof of \cref{THM:main_C_4k+2minus}.
The colored stability result from the previous subsection will be applied to vertex links of a near-extremal $4$-graph.
Before doing this, we record two auxiliary steps.
The first reduces the problem to the natural minimum-degree setting, and the second shows that a small forbidden configuration in one vertex link extends to many copies of $C_{4k+2}^{4-}$.

The following lemma shows that, in the proof of \cref{THM:main_C_4k+2minus}, we may assume that an extremal $4$-graph has minimum degree at least $b(n)-b(n-1) = (1/2+o(1))\binom n3$.

\begin{lemma}\label{lem:min-degree-reduction}
Fix $k\ge1$. Suppose that there exists $N$
such that the following reduced statement holds for every $n\ge N$: if an
$C_{4k+2}^{4-}$-free $4$-graph $G$ on $n$ vertices satisfies $\delta_1(G)\ge b(n)-b(n-1)$ and $|G|\ge b(n)$, then there is a partition $X\sqcup Y=V(G)$ attaining
$b(n)$ such that $G=\mathbb{B}_4^{\mathrm{odd}}(X,Y)$.
Then, for every sufficiently large $n$, every $C_{4k+2}^{4-}$-free $4$-graph $H$
on $n$ vertices with at least $b(n)$ edges satisfies the following:
there is a partition $X\sqcup Y=V(H)$ attaining $b(n)$ such that $H=\mathbb{B}_4^{\mathrm{odd}}(X,Y)$.
In particular, $\ex(n,C_{4k+2}^{4-})\le b(n)$ for all sufficiently large $n$.
\end{lemma}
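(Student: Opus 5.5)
We will use the standard vertex-deletion argument. Take $n$ large and a $C_{4k+2}^{4-}$-free $H$ on $n$ vertices with $|H|\ge b(n)$. Build a sequence $H_n=H, H_{n-1},\dots$: if the current $H_m$ on $m$ vertices has a vertex of degree less than $b(m)-b(m-1)$, delete it to get $H_{m-1}$. Each $H_m$ is $C_{4k+2}^{4-}$-free, being an induced subgraph of $H$. By induction, $|H_m|-b(m)\ge n-m$. Indeed, $|H_n|-b(n)\ge0$, and each deletion removes at most $b(m)-b(m-1)-1$ edges, so
\[
|H_{m-1}|-b(m-1)\ge |H_m|-b(m)+1 .
\]

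Next we show the process stops early. By \cref{lem:odd-bip-Fk-free} and the explicit formula, $b(m)\sim\frac12\binom m4$. Stability also gives a crude upper bound: by \cref{THM:main_C_4k+2} for $k\ge2$ (via $C^{4-}\subseteq C^4$ freeness) and \cref{THM:stabi_C6minus} for $k=1$, together with $\pi(C_{4k+2}^{4-})=1/2$, we have $\ex(m,C_{4k+2}^{4-})\le b(m)+o(m^4)$. Even more simply, $\ex(m,\cdot)\le\binom m4$.

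If the process ran down to $m=\lfloor \sqrt n\rfloor$ (or any $m$ with $\binom m4<n-m$), then $|H_m|\ge b(m)+n-m>\binom m4$, which is impossible. So the process stops at some $m\ge n^{1/4}$, which is larger than $N$ once $n$ is large. At that point $H_m$ satisfies $\delta_1(H_m)\ge b(m)-b(m-1)$ and $|H_m|\ge b(m)+(n-m)\ge b(m)$.

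The reduced statement then gives $H_m=\mathbb{B}_4^{\mathrm{odd}}(X,Y)$ with exactly $b(m)$ edges. Combined with $|H_m|\ge b(m)+(n-m)$, this forces $m=n$. Hence no vertex was deleted, and $H$ itself satisfies the minimum-degree hypothesis. Applying the reduced statement to $H$ directly yields the partition $X\sqcup Y$ attaining $b(n)$ with $H=\mathbb{B}_4^{\mathrm{odd}}(X,Y)$. In particular $|H|=b(n)$, so no $C_{4k+2}^{4-}$-free graph has more than $b(n)$ edges, giving $\ex(n,C_{4k+2}^{4-})\le b(n)$.

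The only delicate point is the integrality bookkeeping. A vertex with degree strictly below $b(m)-b(m-1)$ loses at most $b(m)-b(m-1)-1$ edges, which gives the gain of $+1$ per deletion. We also need the stopping size $m$ to exceed $N$; the trivial bound $\binom m4$ handles this, since $n-m$ grows linearly while the process would require $\binom m4\ge n-m$. Everything else is routine.
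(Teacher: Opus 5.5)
Your argument is correct and is essentially the paper's vertex-deletion argument: each deletion of a low-degree vertex raises the excess over $b(\cdot)$ by at least $1$, and $\binom m4$ caps that excess. The only difference is that you run a single process from $|H|\ge b(n)$ and use the exactness $|H_m|=b(m)$ from the reduced statement to force $m=n$. The paper instead first proves the upper bound for graphs with more than $b(n)$ edges, and then handles the equality case by one further deletion together with the bound at order $n-1$.

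One slip to fix: $m=\lfloor\sqrt n\rfloor$ does not satisfy $\binom m4<n-m$, since that binomial is of order $n^2$. The right threshold is $m=\lfloor n^{1/4}\rfloor$, for which $\binom m4\le n/24<n-m$. This is what your parenthetical condition and your conclusion $m\ge n^{1/4}$ actually use. The appeal to the stability theorems is unnecessary.
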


\begin{proof}
We first prove the upper bound. Let $n\ge N+N^4$, and suppose that $G_n$ is a
$C_{4k+2}^{4-}$-free $4$-graph on $n$ vertices with $|G_n|>b(n)$. If $G_n$ satisfies the minimum-degree condition, then the reduced statement implies that
$G_n\cong \mathbb{B}_4^{\mathrm{odd}}(n)$ and has exactly $b(n)$ edges, a contradiction.
Otherwise, there exists a vertex $v_n$ such that $d_{G_n}(v_n)<b(n)-b(n-1)$.
Since degrees and $b(\cdot)$ are integers, in fact $d_{G_n}(v_n)\le b(n)-b(n-1)-1$.
Hence
\[
|G_n-v_n|
\ge |G_n|-\bigl(b(n)-b(n-1)-1\bigr)
>b(n-1)+1.
\]
Iterating this deletion procedure, either at some order $t\ge N$ we obtain an
$C_{4k+2}^{4-}$-free $4$-graph $G_t$ with $|G_t|>b(t)$ and
$\delta_1(G_t)\ge b(t)-b(t-1)$, contradicting the reduced statement, or we delete
all the way down to $N$ vertices. In the latter case the excess above $b(\cdot)$
increases by at least $1$ at each deletion, so the final $N$-vertex $4$-graph has
more than $b(N)+N^4$ edges, which is impossible since it has at most $\binom N4<N^4$
edges. Thus every $C_{4k+2}^{4-}$-free $4$-graph on $n$ vertices has at most
$b(n)$ edges for all sufficiently large $n$.

Now let $H$ be a $C_{4k+2}^{4-}$-free $4$-graph on $n$ vertices, where $n$ is sufficiently large, and suppose that $|H|\ge b(n)$. By the upper bound just proved, $|H|=b(n)$. We may also assume that
$n-1$ is large enough for the upper bound to hold. If some vertex $v$ satisfies $d_H(v)<b(n)-b(n-1)$,
then, again using integrality,
\[
 |H-v|\ge b(n)-\bigl(b(n)-b(n-1)-1\bigr)=b(n-1)+1,
\]
contradicting the upper bound at order $n-1$. Hence $H$ satisfies
the minimum-degree condition, and the reduced statement applies. Therefore, there is a partition $X\sqcup Y=V(H)$ attaining $b(n)$ such that $H=\mathbb{B}_4^{\mathrm{odd}}(X,Y)$.
\end{proof}

We next prove the auxiliary lemmas used in the reduced minimum-degree argument.
The first local lemma turns a copy of a member of $\mathcal P$ in one vertex link into many copies of the forbidden hypergraph.

\begin{lemma}\label{lem:P-extension}
Fix $k\ge1$, and put $m\coloneqq4k+2$. There is a constant $c=c(k)>0$ such that the following holds for all sufficiently large $n$. Let $V(H)=A\sqcup B\sqcup\{v\}$, where $|A|,|B|\ge n/3$, and suppose that $H-v=\mathbb{B}_4^{\mathrm{odd}}(A,B)$.
If $L_H(v)$ contains a color-preserving copy of some $F\in\mathcal P$, then $H$ contains at least $c n^{m-6}$ labeled copies of $C_m^{4-}$ containing $v$.  Moreover, in each such copy, the $4$-edges containing $v$ are exactly the three edges obtained by adjoining $v$ to the three triples of this copy of $F$ in $L_H(v)$.
\end{lemma}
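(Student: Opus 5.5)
The plan is to build the copies of $C_m^{4-}$ directly. We place $v$ and the five vertices of the given copy of $F$ in a prescribed cyclic order, and fill the remaining $m-6$ positions with vertices of $A$ or $B$ whose colors are forced by the parity argument of \cref{lem:odd-bip-Fk-free}. The edges of the cycle that avoid $v$ then lie automatically in $H-v=\mathbb{B}_4^{\mathrm{odd}}(A,B)$.

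\textbf{Step 1: where $v$ sits.} Write the cycle as $x_1,\dots,x_m$ with $v=x_p$. The missing edge is taken to be the window $\{x_p,x_{p+1},x_{p+2},x_{p+3}\}$. The three remaining windows through $v$ then give the link triples
\[
\{x_{p-3},x_{p-2},x_{p-1}\},\quad \{x_{p-2},x_{p-1},x_{p+1}\},\quad \{x_{p-1},x_{p+1},x_{p+2}\}.
\]
All three contain $x_{p-1}$, and the middle triple meets each of the other two in a pair. This matches $F=\{123,124,135\}$ via
\[
x_{p-3}=4,\quad x_{p-2}=2,\quad x_{p-1}=1,\quad x_{p+1}=3,\quad x_{p+2}=5 .
\]
Because $F$ has the automorphism $2\leftrightarrow3$, $4\leftrightarrow5$, the mirrored assignment is also available:
\[
x_{p-3}=5,\quad x_{p-2}=3,\quad x_{p-1}=1,\quad x_{p+1}=2,\quad x_{p+2}=4 .
\]
In either case the $4$-edges through $v$ are exactly $v$ plus the three triples of $F$, which gives the ``moreover'' clause.

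\textbf{Step 2: the path avoiding $v$.} Read cyclically from $x_{p+1}$, the vertices other than $v$ form a path with $m-1$ vertices. For the first assignment this path is
\[
3,\,5,\,w_1,\dots,w_{m-6},\,4,\,2,\,1,
\]
indexed $0,\dots,m-2$. Its $m-4$ consecutive $4$-windows are exactly the cycle edges not containing $v$. Each must meet $B$ in an odd number of vertices, i.e.\ have odd $\eta$-sum where $\eta$ is the indicator of $B$. As in \cref{lem:odd-bip-Fk-free}, consecutive windows both being odd forces $\eta$ to be $4$-periodic along the path, with the first window odd. Since $m-4=4k-2$, the last three vertices $4,2,1$ occupy indices $\equiv 2,3,0 \pmod 4$. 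So the constraints are:
\begin{itemize}
\item $\eta(1)=\eta(3)$, since $1$ and $3$ are both at index $\equiv0$;
\item the $w$'s at indices $\equiv 2,3$ get the colors of $4$ and $2$ respectively (free choices), and those at index $\equiv 0$ or $1$ copy the colors of $3$ or $5$;
\item $\eta(2)+\eta(3)+\eta(4)+\eta(5)\equiv 1 \pmod 2$.
\end{itemize}
For the mirrored assignment the conditions become $\eta(1)=\eta(2)$ and the same odd-sum condition.

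\textbf{Step 3: check the four members of $\mathcal P$.} For $P_1$ we have $\eta=(0,0,0,0,1)$, so the first assignment works. For $P_2$ we have $\eta=(1,1,0,0,0)$; here $\eta(1)\ne\eta(3)$, but $\eta(1)=\eta(2)$ and the four-term sum is $1$, so the mirrored assignment works. Swapping colors replaces $\eta$ by $1-\eta$, which preserves these equalities and changes each four-term sum by $4$, so $P_1^{\mathrm{sw}}$ and $P_2^{\mathrm{sw}}$ are handled in the same way. The case $k=1$ has no $w$'s, and the same index computation shows the path $3,5,4,2,1$ is already consistent.

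\textbf{Counting.} Each $w_i$ may be any unused vertex of its prescribed class. There are at least $n/3-m$ choices per position, giving at least $c\,n^{m-6}$ labeled copies containing $v$. The main obstacle is Step 1: pinning down that the deleted edge must be an end window through $v$ so that three triples share a common vertex, and then noticing that $P_2$ needs the mirrored embedding. The rest is the parity bookkeeping already used in \cref{lem:odd-bip-Fk-free}.
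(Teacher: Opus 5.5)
Your proof is correct and follows essentially the paper's route. You place $v$ so that the deleted edge is the window starting at $v$, realize the three link triples as the other three windows through $v$, force the colors of the remaining $m-6$ positions so that every window avoiding $v$ is odd, and count the free positions. The differences are cosmetic: the paper uses your ``mirrored'' ordering $v,2,4,\dots,5,3,1$ for both $P_1$ and $P_2$ (it satisfies $\eta(1)=\eta(2)$ in both cases), so your case split is unnecessary; also, under the color swap a four-term sum $s$ becomes $4-s$ rather than ``changing by $4$'', which still preserves parity.
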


\begin{proof}
It is enough to consider $P_1$ and $P_2$, since the configurations obtained by interchanging the roles of $A$ and $B$ are handled in the same way. This interchange does not change the complete odd-bipartite $4$-graph, because the edge size is even.

Suppose first that $F=P_1$. After relabeling the image of this copy in $L_H(v)$, we may write its vertices as $1,2,3,4\in A$ and $5\in B$, and the three link triples are $123,124,135$. Thus $123v$, $124v$, and $135v$ are edges of $H$. To check which prescribed edges lie in the odd-bipartite construction, regard $v$ as having color $B$. In the cyclic order $v,2,4,(B,A,A,A)^{k-1},5,3,1$, where the middle block is empty if $k=1$, delete the edge corresponding to the first four consecutive vertices. The three remaining edges containing $v$ in the cyclic order are $531v, 31v2, 1v24$, which are exactly the three edges through $v$ corresponding to the link triples $135,123,124$. Every other prescribed edge uses only old vertices. These old vertices lie in the parts $A,A,(B,A,A,A)^{k-1},B,A,A$, and every four consecutive terms in this sequence contain an odd number of vertices in $B$. Hence all old edges used in the construction lie in $\mathbb{B}_4^{\mathrm{odd}}(A,B)$.

For $F=P_2$, relabel the image in $L_H(v)$ so that $1,2\in B$ and $3,4,5\in A$, again with link triples $123,124,135$. Thus the same three edges through $v$ are present in $H$. Again regard $v$ as having color $B$, and use the cyclic order $v,2,4,(A,A,B,A)^{k-1},5,3,1$. The three edges through $v$ in the cyclic order are again $531v,31v2,1v24$, and the old vertices lie in the parts $B,A,(A,A,B,A)^{k-1},A,A,B$. Again every four consecutive terms in this sequence contain an odd number of vertices in $B$, so all old edges lie in $\mathbb{B}_4^{\mathrm{odd}}(A,B)$.

In either case, once the five vertices of the copy and $v$ are fixed, there remain exactly $m-6$ old positions in the chosen cyclic order. Each of these positions has a prescribed color. Since $|A|,|B|\ge n/3$, for all sufficiently large $n$ the remaining positions can be filled in at least $c n^{m-6}$ labeled ways, avoiding the already chosen vertices, for some constant $c=c(k)>0$. All prescribed edges using only old vertices are template edges and hence are present in $H$, while the three prescribed edges through $v$ are present because they come from $L_H(v)$. This proves the lemma.
\end{proof}

We now prove the exact theorem.

\begin{proof}[Proof of \cref{THM:main_C_4k+2minus}]
Put $m\coloneqq4k+2$. By \cref{lem:odd-bip-Fk-free}, every complete odd-bipartite $4$-graph is $C_{4k+2}^{4-}$-free, so it remains to prove the matching upper bound and the equality statement. By \cref{lem:min-degree-reduction}, it is enough to verify the reduced minimum-degree statement: for all sufficiently large $n$, every $C_{4k+2}^{4-}$-free $4$-graph $H$ on $n$ vertices satisfying $\delta_1(H)\ge b(n)-b(n-1)$ and $|H|\ge b(n)$ satisfies $H\cong \mathbb{B}_4^{\mathrm{odd}}(n)$.

Let $H$ be such a $4$-graph, and put $V\coloneqq V(H)$. Choose constants as follows. First choose $0<\varepsilon_3\ll \varepsilon_2\ll \varepsilon_1\ll c_0\ll_k 1$. 
Put $\xi\coloneqq\min\{\varepsilon_3/4,10^{-5}\}$,
and let $\delta_0>0$, $\zeta_0>0$, and $n_{\ref{lem:local-link-stability}}$ be the constants returned by \cref{lem:local-link-stability} for this value of $\xi$. Set
$\alpha_{\mathrm L}\coloneqq\min\{\delta_0,10^{-2}\}$.
Finally choose $0<\varepsilon_4\ll_k \min\{\varepsilon_3,\zeta_0,\alpha_{\mathrm L}^4\}$. 
Let $\rho>0$ be the density tolerance supplied by \cref{THM:stabi_C6minus} with edit parameter $\varepsilon_4/2$.  Choose
$0<\eta<\min\{\varepsilon_4/2,\rho/100\}$.
Apply \cref{LEM:blowup_lemma}\textup{(ii)} with $F=C_6^{4-}$, $t=k$, and this value of $\eta$.  Assume $n$ is sufficiently large compared with all fixed constants above and with the thresholds obtained from this application and from \cref{THM:stabi_C6minus}.

By \cref{LEM:cycle-blowup-embeddings}, $C_{4k+2}^{4-}\subseteq C_6^{4-}[k]$.  Since $H$ is $C_{4k+2}^{4-}$-free, it is $C_6^{4-}[k]$-free.  Hence, by \cref{LEM:blowup_lemma}\textup{(ii)}, deleting at most $\eta n^4$ edges gives a $C_6^{4-}$-free $4$-graph $H_0$.  Since $b(n)=\left(\frac12+o(1)\right)\binom n4$ and $\eta<\rho/100$, for all sufficiently large $n$ we have $|H_0|\ge b(n)-\eta n^4\ge \left(\frac12-\rho\right)\binom n4$. 
\cref{THM:stabi_C6minus} gives a partition $U_1\sqcup U_2=V$ such that, with $\mathcal T_U\coloneqq\mathbb{B}_4^{\mathrm{odd}}(U_1,U_2)$, we have $|H\triangle \mathcal T_U|\le |H\triangle H_0|+|H_0\triangle \mathcal T_U|\le \varepsilon_4n^4$. 

Now choose a partition $V_1\sqcup V_2=V$ maximizing $|H\cap \mathbb{B}_4^{\mathrm{odd}}(V_1,V_2)|$. 
Put
\[
\mathcal T\coloneqq\mathbb{B}_4^{\mathrm{odd}}(V_1,V_2),\qquad
\mathcal B\coloneqq H\setminus \mathcal T,\qquad
\mathcal M\coloneqq\mathcal T\setminus H.
\]
Here $\mathcal T$ is the \emph{template}, $\mathcal B$ is the family of \emph{bad edges}, and $\mathcal M$ is the family of \emph{missing template edges}.
We shall repeatedly use the decomposition
\begin{equation}\label{eq:H-decomposition-final}
|H|=|\mathcal T|-|\mathcal M|+|\mathcal B|.
\end{equation}

\begin{claim}\label{clm:BM-upper}
We have $\max\{|\mathcal B|,|\mathcal M|\}\le \varepsilon_4n^4$. 
\end{claim}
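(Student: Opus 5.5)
The plan is to compare $H$ with the template $\mathcal T_U$ supplied by \cref{THM:stabi_C6minus}, and then use the maximality in the choice of $(V_1,V_2)$.

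\emph{Bound on $\mathcal B$.} Since $(V_1,V_2)$ maximizes $|H\cap \mathbb{B}_4^{\mathrm{odd}}(V_1,V_2)|$, we have
\[
|H\cap\mathcal T|\ge |H\cap \mathcal T_U|.
\]
Hence
\[
|\mathcal B|=|H|-|H\cap\mathcal T|\le |H|-|H\cap\mathcal T_U|=|H\setminus\mathcal T_U|\le |H\triangle\mathcal T_U|\le \varepsilon_4 n^4.
\]
This step is immediate.

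\emph{Bound on $\mathcal M$.} Here I would use the decomposition \eqref{eq:H-decomposition-final} together with $|\mathcal T|\le b(n)$, which holds because $\mathcal T$ is complete odd-bipartite. I also need an upper bound $|H|\le b(n)$.

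\emph{Obtaining $|H|\le b(n)$.} In the reduced statement we only know $|H|\ge b(n)$, so an upper bound is not given directly. I would argue as follows. If $|H|>b(n)$, then $H$ is still $C_{4k+2}^{4-}$-free, and the structure below still applies. To avoid needing the bound, I would instead compare $\mathcal M$ with $\mathcal T_U\setminus H$, as with $\mathcal B$:
\[
|\mathcal M|=|\mathcal T|-|H\cap\mathcal T|\le |\mathcal T|-|H\cap\mathcal T_U|=|\mathcal T|-|\mathcal T_U|+|\mathcal T_U\setminus H|.
\]
The last term is at most $\varepsilon_4n^4$. It therefore remains to control $|\mathcal T|-|\mathcal T_U|$.

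\emph{Controlling $|\mathcal T|-|\mathcal T_U|$.} From the decomposition and $|H|\ge b(n)\ge|\mathcal T|$, we get $|\mathcal M|\le|\mathcal B|$. So $|\mathcal M|\le|\mathcal B|\le\varepsilon_4 n^4$ directly, with no need for an upper bound on $|H|$. Concretely:
\[
|\mathcal M|=|\mathcal T|-|H|+|\mathcal B|\le b(n)-b(n)+|\mathcal B|=|\mathcal B|.
\]
This settles $\mathcal M$ as well.

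\emph{Main obstacle.} I expect none beyond getting the direction of the maximality right. The claim is stated with the same constant $\varepsilon_4$ for both quantities, which matches these two inequalities exactly. The only point to verify carefully is that the $\varepsilon_4$ edit bound stated before the claim holds for $H$ itself rather than only for $H_0$, and the displayed triangle inequality preceding the claim ensures this.
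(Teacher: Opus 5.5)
Your proposal is correct and matches the paper's proof: maximality of $V_1\sqcup V_2$ gives $|\mathcal B|\le|H\setminus\mathcal T_U|\le\varepsilon_4n^4$, and then $|H|\ge b(n)\ge|\mathcal T|$ gives $|\mathcal M|=|\mathcal T|-|H\cap\mathcal T|\le|H|-|H\cap\mathcal T|=|\mathcal B|$. You can delete the detours about needing $|H|\le b(n)$ and about bounding $|\mathcal T|-|\mathcal T_U|$, since neither is used in the final argument.
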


\begin{proof}
By the maximality of $V_1\sqcup V_2$, we have $|H\cap \mathcal T|\ge |H\cap \mathcal T_U|$. 
Since $|H\triangle \mathcal T_U|\le \varepsilon_4n^4$, we get
\[
|\mathcal B|=|H|-|H\cap \mathcal T|
\le |H|-|H\cap \mathcal T_U|
\le \varepsilon_4n^4.
\]
Also $|H|\ge b(n)\ge |\mathcal T|$, and therefore $|\mathcal M|=|\mathcal T|-|H\cap \mathcal T| \le |H|-|H\cap \mathcal T|=|\mathcal B|$. 
This proves the claim.
\end{proof}

The next claim shows that the maximizing partition is balanced enough for the local link stability lemma.

\begin{claim}\label{clm:part-size}
We have $\bigl||V_1|-|V_2|\bigr|\le {\alpha_{\mathrm L} n}/{20}$. 
In particular, $\min\{|V_1|,|V_2|\} \ge n/3$.
\end{claim}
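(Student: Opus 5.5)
We argue by edge counting: since $|H|\ge b(n)$ and $H$ is close to $\mathcal T$, the template $\mathcal T=\mathbb{B}_4^{\mathrm{odd}}(V_1,V_2)$ cannot have many fewer edges than $b(n)$. A very unbalanced partition would lose a constant fraction of $\binom n4$.

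First, combine \eqref{eq:H-decomposition-final} with \cref{clm:BM-upper} to get
\[
|\mathcal T| = |H|+|\mathcal M|-|\mathcal B| \ge b(n)-\varepsilon_4 n^4 .
\]

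Second, compute $|\mathcal T|$ from the explicit formula recorded after \cref{THM:main_C_4k+2minus}. Write $d\coloneqq\bigl||V_1|-|V_2|\bigr|$. Then
\[
|\mathcal T| = \frac{(n^2-d^2)(n^2+d^2-6n+8)}{48} = \frac{(n^2-3n+4)^2-\left(d^2-(3n-4)\right)^2}{48},
\]
and $b(n)$ is the same expression with $d^2$ as close to $3n-4$ as the parity constraint allows, so $b(n)\ge \frac{(n^2-3n+4)^2-O(n^2)}{48}$. Hence
\[
b(n)-|\mathcal T| \ge \frac{\left(d^2-(3n-4)\right)^2-O(n^2)}{48}.
\]

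Third, suppose for contradiction that $d>\alpha_{\mathrm L}n/20$. Then $d^2-(3n-4)\ge \alpha_{\mathrm L}^2n^2/400-3n\ge \alpha_{\mathrm L}^2n^2/800$ for large $n$. This gives $b(n)-|\mathcal T|\ge \alpha_{\mathrm L}^4n^4/(48\cdot 800^2)-O(n^2)$. This contradicts $b(n)-|\mathcal T|\le\varepsilon_4n^4$, because $\varepsilon_4\ll\alpha_{\mathrm L}^4$ was chosen with exactly this room. So $d\le\alpha_{\mathrm L}n/20$.

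Finally, since $\alpha_{\mathrm L}\le 10^{-2}$, we get $\min\{|V_1|,|V_2|\}\ge (n-\alpha_{\mathrm L}n/20)/2\ge n/3$.

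The only delicate point is the constant bookkeeping: the hierarchy $\varepsilon_4\ll\alpha_{\mathrm L}^4$ must absorb the factor $48\cdot 800^2$. This is ensured by the choice of $\varepsilon_4$ in the proof of \cref{THM:main_C_4k+2minus}.
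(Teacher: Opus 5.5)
Your proposal is correct and follows essentially the paper's argument: both bound $|\mathcal T|\ge b(n)-\varepsilon_4n^4$ via \cref{clm:BM-upper} and then use that the template's edge count falls below $b(n)$ by roughly the fourth power of the imbalance. The only difference is cosmetic: you use the exact formula in $d$, whereas the paper uses the asymptotic identity $x(1-x)^3+x^3(1-x)=\frac18-2\left(x-\frac12\right)^4$ with $x=|V_1|/n$ and $O(n^3)$ error terms. Both rely on $\varepsilon_4\ll\alpha_{\mathrm L}^4$ in the same way.
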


\begin{proof}
Put $n_i\coloneqq|V_i|$ for $i=1,2$, and $x\coloneqq n_1/n$. Since $|H|\ge b(n)$ and $|\mathcal B|\le\varepsilon_4n^4$, we have 
\begin{equation}\label{eq:starbound}
|\mathcal T|
\ge |H\cap \mathcal T|
=|H|-|\mathcal B|
\ge b(n)-\varepsilon_4n^4.
\end{equation}
On the other hand,
\[
|\mathcal T|=n_1\binom{n_2}{3}+n_2\binom{n_1}{3}
=\frac{n^4}{6}\bigl(x(1-x)^3+x^3(1-x)\bigr)+O(n^3).
\]
Using the identity $x(1-x)^3+x^3(1-x)=\frac18-2\left(x-\frac12\right)^4$ and $b(n)=n^4/48+O(n^3)$, inequality \eqref{eq:starbound} gives
\[
\frac{n^4}{48}-\frac{n^4}{3}\left(x-\frac12\right)^4+O(n^3)
\ge \frac{n^4}{48}-\varepsilon_4n^4-O(n^3).
\]
Hence $|x-1/2|^4\le C\varepsilon_4+O(1/n)$ for an absolute constant $C$. By the choice of $\varepsilon_4$ and by taking $n$ large, $|x-1/2|\le \alpha_{\mathrm L}/40$. Therefore $\bigl||V_1|-|V_2|\bigr|=2n\left|x-\frac12\right|\le \frac{\alpha_{\mathrm L}}{20}n$.
Since $\alpha_{\mathrm L}\le10^{-2}$, both parts have size at least $0.49n$ for large $n$, and in particular both have size at least $n/3$.
\end{proof}

The next claim gives vertex-wise control of missing and bad edges. It uses both the minimum-degree assumption and the maximality of $V_1\sqcup V_2$.

\begin{claim}\label{clm:vertex-missing}
For every vertex $v\in V$, we have $d_{\mathcal M}(v)+d_{\mathcal B}(v)\le \varepsilon_3n^3$. 
In particular, $d_{\mathcal M}(v)\le \varepsilon_3n^3$.
\end{claim}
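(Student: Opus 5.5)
We would prove \cref{clm:vertex-missing} by applying the colored link stability result, \cref{lem:local-link-stability}, to the link of a single vertex. We then use the maximality of $V_1\sqcup V_2$ to decide which of the two templates $\mathsf O$, $\mathsf E$ the link must be close to.

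\textbf{Setup.} Fix $v\in V$ and, by symmetry, assume $v\in V_1$. Let $L\coloneqq L_H(v)$, viewed as a two-colored $3$-graph on $V\setminus\{v\}$ with color classes $A\coloneqq V_1\setminus\{v\}$ and $B\coloneqq V_2$. By \cref{clm:part-size}, $\bigl||A|-|B|\bigr|\le \alpha_{\mathrm L}n/20+1\le\delta_0(n-1)$. By the minimum-degree assumption, $|L|\ge b(n)-b(n-1)=\bigl(\tfrac12+o(1)\bigr)\binom{n-1}3$, so $|L|\ge\bigl(\tfrac12-\delta_0\bigr)\binom{n-1}{3}$. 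Let $\mathcal T'$ be the template obtained by moving $v$ to $V_2$. A triple $S$ lies in $L_{\mathcal T}(v)$ exactly when $|S\cap V_1|$ is even, i.e. $|S\cap B|$ is odd. Hence $L_{\mathcal T}(v)=\mathsf O(A,B)$ and $L_{\mathcal T'}(v)=\mathsf E(A,B)$, and these two sets partition $\binom{V\setminus\{v\}}{3}$. Moving $v$ changes $|H\cap\mathcal T|$ by exactly $|L\cap \mathsf E(A,B)|-|L\cap\mathsf O(A,B)|$. Maximality therefore gives
\[
|L\cap\mathsf E(A,B)|\le |L\cap\mathsf O(A,B)|.
\]

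\textbf{Step 1: few copies of $\mathcal P$ in $L$.} We show that each $F\in\mathcal P$ has at most $\zeta_0 n^5$ color-preserving copies in $L$. Suppose instead that some $F$ has more than $\zeta_0n^5$ copies. For each copy, the cyclic orders in the proof of \cref{lem:P-extension} (which only uses $|V_1|,|V_2|\ge n/3$) give at least $cn^{m-6}$ labeled candidate copies of $C_m^{4-}$ through $v$. Across distinct copies of $F$ these candidates are distinct, because the three edges through $v$ determine the copy of $F$. Every prescribed edge through $v$ lies in $H$, and every prescribed edge avoiding $v$ is a template edge. So a candidate fails to be a genuine copy in $H$ only if one of its prescribed edges avoiding $v$ lies in $\mathcal M$. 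A fixed $4$-set in $\mathcal M$, placed at one of $O(1)$ positions, lies in $O_k(n^{m-5})$ candidates, since those candidates contain $v$ and the four fixed vertices. By \cref{clm:BM-upper}, at most $O_k(\varepsilon_4 n^{m-1})$ candidates fail. Since $\varepsilon_4\ll_k\zeta_0$, this is less than $c\zeta_0n^{m-1}$. Hence $H$ contains a copy of $C_m^{4-}$, a contradiction.

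\textbf{Step 2: choosing the right template.} \cref{lem:local-link-stability} now gives that $L$ is within $\xi n^3$ of $\mathsf O(A,B)$ or of $\mathsf E(A,B)$. Suppose $L$ were within $\xi n^3$ of $\mathsf E(A,B)$. Then $|L\cap\mathsf O(A,B)|\le\xi n^3$, while $|L\cap \mathsf E(A,B)|\ge |L|-\xi n^3\ge n^3/13$. This contradicts the maximality inequality above, as $\xi\le 10^{-5}$. Hence $|L\triangle \mathsf O(A,B)|\le \xi n^3$. Moreover,
\[
d_{\mathcal B}(v)=|L\setminus \mathsf O(A,B)| \quad\text{and}\quad d_{\mathcal M}(v)=|\mathsf O(A,B)\setminus L|,
\]
so $d_{\mathcal M}(v)+d_{\mathcal B}(v)\le\xi n^3\le\varepsilon_3n^3$.

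\textbf{Main obstacle.} We expect Step 1 to be the main obstacle. \cref{lem:P-extension} is stated for $H-v$ exactly equal to the template, whereas here $H-v$ is only $\varepsilon_4n^4$-close to it. The plan handles this by the counting and deletion argument above, and the point that needs care is the injectivity of the map from (copy of $F$, filling of the cyclic order) to labeled copies of $C_m^{4-}$.
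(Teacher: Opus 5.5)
Your proposal is correct and follows the paper's proof essentially step for step. Like the paper, you verify the hypotheses of \cref{lem:local-link-stability} for $L_H(v)$: balance from \cref{clm:part-size}, density from the minimum-degree assumption, and few copies of each member of $\mathcal P$, obtained by extending each copy through \cref{lem:P-extension} and charging every failed extension to an edge of $\mathcal M$. You then rule out the $\mathsf E(A,B)$ alternative using the maximality of $V_1\sqcup V_2$. The differences are cosmetic: you argue the copy bound by contradiction, and you lower-bound $|L\cap\mathsf E(A,B)|$ via $|L|$ rather than via $|\mathsf E(A,B)|$. The copy bound should be stated as $\zeta_0(n-1)^5$, since $L$ has $n-1$ vertices, and the slack in your count gives this immediately.
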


\begin{proof}
We use the constants $\xi$, $\delta_0$, $\zeta_0$, and $n_{\ref{lem:local-link-stability}}$ fixed above. By \cref{clm:part-size}, after deleting any fixed vertex the two color classes are $\delta_0$-balanced for all sufficiently large $n$.

Fix $v\in V$. By symmetry between the two parts, we may assume that $v\in V_1$; the case $v\in V_2$ is obtained by interchanging $V_1$ and $V_2$, which does not change the complete odd-bipartite $4$-graph. Set $X\coloneqq V\setminus\{v\}$ and $L\coloneqq L_H(v)$, and regard $L$ as a two-colored $3$-graph on $X$ with the colors inherited from $V_1\sqcup V_2$. Put $A\coloneqq V_1\setminus\{v\}$ and $B\coloneqq V_2$. 
Since $v\in V_1$, the triples $T\in\mathsf O(A,B)$ are precisely those for which $T\cup\{v\}\in\mathcal T$, and $\binom X3\setminus\mathsf O(A,B)=\mathsf E(A,B)$. Moreover,
\begin{equation}\label{eq:V1}
\left|\mathsf O(A,B)\setminus L\right|=d_{\mathcal M}(v),
\quad\text{and}\quad 
\left|L\setminus\mathsf O(A,B)\right|=d_{\mathcal B}(v).
\end{equation}

We verify the hypotheses of \cref{lem:local-link-stability} for $L$. By the minimum-degree assumption, we have 
\[
|L|=d_H(v)\ge b(n)-b(n-1)
=\left(\frac12+O(1/n)\right)\binom{n-1}{3}
\ge \left(\frac12-\delta_0\right)\binom{n-1}{3}.
\]
It remains to bound the number of copies of members of $\mathcal P$ in $L$. Fix $F\in\mathcal P$, and let $N_F(L)$ be the number of color-preserving labeled copies of $F$ in $L$. Define the following auxiliary $4$-graph on $A\cup B\cup\{v\}$: 
\begin{align*}
    G_v\coloneqq\mathbb{B}_4^{\mathrm{odd}}(A,B)\cup\left\{ e \cup \{v\} \colon e \in L\right\}. 
\end{align*}
Since $L_{G_v}(v)=L$, and since \cref{clm:part-size} gives $|A|,|B|\ge n/3$, \cref{lem:P-extension} applied to $G_v$ shows that each copy of $F$ in $L$ gives at least $c_k n^{m-6}$ labeled potential copies of $C_m^{4-}$ in $G_v$, where $c_k>0$ depends only on $k$.

All $4$-edges not containing $v$ that are used in these potential copies are edges of $\mathcal T$ contained in $X$. If none of those edges belonged to $\mathcal M$, then all prescribed edges of such a copy would be present in $H$: the edges not containing $v$ would be present template edges, and the three edges through $v$ are present because their triples lie in $L=L_H(v)$. Since $H$ is $C_m^{4-}$-free, every potential copy counted above must contain at least one missing template edge from $\mathcal M$ that is contained in $X$. Conversely, a fixed edge of $\mathcal M$ can lie in at most $D_k n^{m-5}$ such labeled potential copies, where $D_k$ depends only on $k$. Therefore 
\[
N_F(L)c_k n^{m-6}
\le D_k|\mathcal M|n^{m-5}
\le D_k\varepsilon_4 n^{m-1},
\]
and hence $N_F(L)\le D'_k\varepsilon_4 n^5\le \zeta_0(n-1)^5$, by the choice of $\varepsilon_4$ and $n$. Thus \cref{lem:local-link-stability} applies to $L$, giving
\begin{equation}\label{eq:V4}
\min\left\{|L\triangle\mathsf O(A,B)|,~
\left|L\triangle\left(\tbinom X3\setminus\mathsf O(A,B)\right)\right|\right\}
\le \xi(n-1)^3.
\end{equation}

We claim that the second alternative in \eqref{eq:V4} is impossible. Let $g_v\coloneqq|L\cap\mathsf O(A,B)|$ and $h_v\coloneqq\left|L\cap\left(\tbinom X3\setminus\mathsf O(A,B)\right)\right|$. 
If we move only the vertex $v$ to the other side of the partition, all edges of $H$ not containing $v$ keep the same status with respect to the odd-bipartite template, while the two classes $\mathsf O(A,B)$ and $\binom X3\setminus\mathsf O(A,B)$ are interchanged for triples through $v$. Since $V_1\sqcup V_2$ maximizes $|H\cap \mathcal T|$, we must have $g_v\ge h_v$.
The balance estimate from \cref{clm:part-size} gives $|V_1|,|V_2|\ge0.49n$ for all sufficiently large $n$. Since $v\in V_1$, we have $|A|\ge0.48n$ and $|B|\ge0.49n$. Hence
\[
|\mathsf O(A,B)|\ge \tbinom{|B|}{3}\ge10^{-3}n^3
\quad\text{and}\quad
\left|\tbinom X3\setminus\mathsf O(A,B)\right|=|\mathsf E(A,B)|\ge \tbinom{|A|}{3}\ge10^{-3}n^3
\]
for all sufficiently large $n$. Therefore, if the second alternative in \eqref{eq:V4} held, then $g_v\le \xi(n-1)^3$ and $h_v\ge 10^{-3}n^3-\xi(n-1)^3$, which contradicts $g_v\ge h_v$ because $\xi\le10^{-5}$. Therefore \eqref{eq:V4} gives $|L\triangle\mathsf O(A,B)|\le \xi(n-1)^3$.
Together with \eqref{eq:V1}, this yields
\[
d_{\mathcal M}(v)+d_{\mathcal B}(v)=|L\triangle\mathsf O(A,B)|
\le \xi(n-1)^3
\le \varepsilon_3n^3.
\]
This proves the claim.
\end{proof}

We next show that each bad edge forces many missing template edges nearby.
By \cref{clm:part-size,clm:vertex-missing}, the hypotheses needed below are now available.

\begin{claim}\label{clm:local-forcing}
Let $i\in\{1,2\}$.
The following assertions hold.
\begin{enumerate}[label=\textup{(\roman*)},ref=\textup{(\roman*)}]
\item\label{itm:local-forcing-same}
Let $a,b\in V_i$, and let $c,d$ lie either both in $V_i$ or both in $V_{3-i}$. If $acbd\in\mathcal B$, then 
\[
d_{\mathcal M}(bcd)\ge c_0n 
\quad\text{or}\quad 
\max\{d_{\mathcal M}(ac),\,d_{\mathcal M}(ad),\,d_{\mathcal M}(bd)\}\ge c_0n^2.
\]

\item\label{itm:local-forcing-cross}
Let $a,c\in V_i$ and $b,d\in V_{3-i}$. If $acdb\in\mathcal B$, then 
\[
\max\{d_{\mathcal M}(cdb),\,d_{\mathcal M}(acd)\}\ge c_0n
\quad\text{or}\quad \max\{d_{\mathcal M}(ac),\,d_{\mathcal M}(bd)\}\ge c_0n^2.
\]
\end{enumerate}
\end{claim}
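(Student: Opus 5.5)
The plan is to argue by contradiction. We embed the bad edge as one of the $m-1$ prescribed edges of a copy of $C_m^{4-}$, choose all other vertices so that the remaining prescribed edges are template edges, and count how many such embeddings can be destroyed by $\mathcal M$. Suppose the conclusion fails. In case \ref{itm:local-forcing-same} this means $d_{\mathcal M}(bcd)<c_0n$ and $d_{\mathcal M}(ac),d_{\mathcal M}(ad),d_{\mathcal M}(bd)<c_0n^2$; case \ref{itm:local-forcing-cross} is analogous. We will build a copy of $C_m^{4-}$ in $H$, contradicting freeness.

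\emph{Step 1: the cyclic pattern.} Place $a,c,b,d$ consecutively in the cyclic order, possibly in a permuted order chosen per case, so that $acbd$ is the edge $\{x_1,x_2,x_3,x_4\}$. Delete one edge of the cycle $C_m^4$. In case \ref{itm:local-forcing-same}, take the deleted edge to be the one immediately preceding $\{x_1,\dots,x_4\}$. Then:
\begin{itemize}
\item the only other prescribed edge meeting the bad edge in three vertices is $\{x_2,x_3,x_4,x_5\}\supseteq bcd$;
\item the edges meeting it in exactly two vertices contain the pairs $\{x_3,x_4\}$ and $\{x_1,x_2\}$ (the latter via the wrap-around edge $\{x_{m-1},x_m,x_1,x_2\}$);
\item the pair $ad$ arises depending on the ordering chosen.
\end{itemize}
We will adjust the order of $a,c,b,d$ so that the pairs and triples appearing are exactly those listed in the claim. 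In case \ref{itm:local-forcing-cross}, we instead delete an edge far from $x_1,\dots,x_4$. Then both neighbouring edges through the triples $cdb$ and $acd$ are prescribed, and the pairs $ac$ and $bd$ appear in the next layer.

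Next, we assign colours $\eta_i\in\{1,2\}$ (meaning $x_i\in V_{\eta_i}$) to the $m-4$ new positions so that every prescribed edge other than $acbd$ meets $V_1$ in an odd number of vertices. Since the deleted edge breaks the cyclic parity system into a path of constraints, as in the proof of \cref{lem:odd-bip-Fk-free}, these constraints are solvable. Consistency follows from parity: $m-2$ odd windows together with two even windows (the bad edge and the deleted edge) sum to an even count. We will write the explicit periodic colour words, in the same style as \cref{lem:P-extension}, handling $k=1$ separately since then there are only two new vertices.

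\emph{Step 2: counting.} By \cref{clm:part-size}, both parts have size at least $n/3$, so there are $\Omega(n^{m-4})$ labeled fillings. A filling fails only if some prescribed template edge lies in $\mathcal M$. We bound the failing fillings according to how many of $a,b,c,d$ the missing edge contains:
\begin{itemize}
\item three of them (the triple $bcd$): at most $c_0n\cdot n^{m-5}$ fillings;
\item a listed pair: at most $c_0n^2\cdot n^{m-6}$ fillings;
\item a single one of $a,b,c,d$: at most $\varepsilon_3n^3\cdot n^{m-7}$ fillings, by \cref{clm:vertex-missing};
\item none of them: at most $|\mathcal M|n^{m-8}\le\varepsilon_4n^{m-4}$ fillings, by \cref{clm:BM-upper}.
\end{itemize}
Each bound carries a constant $C_k$ for the number of positions. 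Since $\varepsilon_4\ll\varepsilon_3\ll c_0\ll_k1$, the total is smaller than the number of fillings. Hence some filling uses only edges of $H$: the bad edge lies in $H$ by definition of $\mathcal B$, and all other prescribed edges are present template edges. This gives a copy of $C_m^{4-}$.

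\emph{Main obstacle.} The main work is Step 1. We must choose, for each colour case, an ordering of $a,b,c,d$ and a deleted edge such that three conditions hold. First, the parity system is solvable, which is exactly what distinguishes case \ref{itm:local-forcing-same} (even-parity bad edges of type $V_iV_iV_iV_i$ or $V_iV_iV_{3-i}V_{3-i}$) from case \ref{itm:local-forcing-cross}. Second, the only triples of $\{a,b,c,d\}$ lying in prescribed edges are the listed ones. Third, the pairs appearing are the listed ones. For small $m$, in particular $m=6$, positions coincide and some prescribed edges contain two of $a,b,c,d$ along with wrap-around vertices. We therefore expect to verify the $k=1$ pattern by hand, and to treat larger $k$ by inserting blocks of four colours as in \cref{LEM:cycle-blowup-embeddings}.
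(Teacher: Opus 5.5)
Your strategy is the paper's. You make the bad edge the window $W_1$ of an ordered $m$-tuple, omit one further window, colour the other positions so that every remaining window is odd, and bound the fillings destroyed by $\mathcal M$ according to how many of $a,b,c,d$ the missing edge contains. Your Step~2 count is the paper's count. The step whose justification fails is the solvability of the colouring in Step~1. Write $\eta_j\in\{0,1\}$ for the part of $x_j$ and $s_j$ for the parity of $W_j$. Once $\eta_1,\dots,\eta_4$ are prescribed, there are $m-4$ unknowns but $m-2$ parity equations. So two compatibility conditions are needed, and evenness of the total number of odd windows addresses neither.

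The first condition comes from the fact that every vertex lies in exactly two windows of each index parity. Hence $\sum_{j\text{ odd}}s_j\equiv\sum_{j\text{ even}}s_j\equiv0\pmod 2$. Since each class has $2k+1$ windows, the omitted window must lie at odd cyclic distance $\Delta$ from the bad one. For example, $\Delta=2$ passes your total-parity test but admits no colouring. For the second condition, note that for fixed window parities the sequence $\eta$ is unique up to adding a $2$-periodic sequence, so $\eta_1+\eta_3$ is determined by $\Delta$. Working it out, $x_1,x_3$ must lie in the same part when $\Delta\equiv1\pmod 4$ and in different parts when $\Delta\equiv3\pmod 4$. For instance, with $m=6$, $a,b,c,d\in V_i$ and the opposite window omitted, no colouring exists.

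Your case (i) choice passes this test. With the order $(a,c,b,d)$ and the preceding window omitted ($\Delta=1$), $x_1=a$ and $x_3=b$ lie in the same part. The paper's explicit words are $\sigma_1=000010(0010)^{k-1}$ when $c,d\in V_i$ and $\sigma_2=010111(0111)^{k-1}$ when $c,d\in V_{3-i}$.

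In case (ii), obtaining exactly the listed triples $cdb,acd$ and pairs $ac,bd$ forces the order $(a,c,d,b)$ up to reversal. Then $x_1,x_3$ lie in different parts, so you need $\Delta\equiv3\pmod 4$, and ``far'' has to be chosen subject to this. For $k=1$ no window avoids the bad edge at all. The only admissible omitted window is the opposite one, $W_4=by_5y_6a$, which is harmless precisely because it is omitted. The paper takes $\Delta=3$ for every $k$, with $\sigma_3=001110(0010)^{k-1}$. With these words fixed, your argument is complete.
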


\begin{proof}
For an ordered $m$-tuple $Y=(y_1,\ldots,y_m)$, write $W_j(Y)\coloneqq y_jy_{j+1}y_{j+2}y_{j+3}$, $j\in \mathbb Z/m\mathbb Z$, for the consecutive $4$-tuples in the cyclic order. Let $\sigma_1$, $\sigma_2$, and $\sigma_3$ be the following sequences of length $m$:
\[
\sigma_1\coloneqq000010(0010)^{k-1},\qquad
\sigma_2\coloneqq010111(0111)^{k-1},\qquad
\sigma_3\coloneqq001110(0010)^{k-1}.
\]
In each application, the symbol $0$ denotes $V_i$ and the symbol $1$ denotes $V_{3-i}$, where $i$ is the part specified in the corresponding item of the claim. A consecutive $4$-tuple is \emph{odd} if it contains an odd number of vertices from $V_{3-i}$, and is \emph{even} otherwise. For $\sigma_1$ and $\sigma_2$, the only even $4$-tuples are the first and the last. For $\sigma_3$, the only even $4$-tuples are the first and the fourth.
Equivalently, the relevant consecutive $4$-tuples are as follows; the word in parentheses records the corresponding $0/1$ pattern.
\[
\begin{array}{c|c|c}
\text{sequence} & \text{even }4\text{-tuples} & \text{odd }4\text{-tuples}\\ \hline
\sigma_1 & W_1(Y)\ (0000),\ W_m(Y)\ (0000) & W_2(Y),W_3(Y),\ldots,W_{m-1}(Y)\\
\sigma_2 & W_1(Y)\ (0101),\ W_m(Y)\ (1010) & W_2(Y),W_3(Y),\ldots,W_{m-1}(Y)\\
\sigma_3 & W_1(Y)\ (0011),\ W_4(Y)\ (1100) & W_2(Y),W_3(Y),W_5(Y),\ldots,W_m(Y).
\end{array}
\]

We first prove \cref{itm:local-forcing-same}. Assume first that $c,d\in V_i$. We use $\sigma_1$. Fix $(y_1, y_2, y_3, y_4) \coloneqq (a, c, b, d)$, and let $Y$ range over all ordered $m$-tuples with this initial segment, pairwise distinct vertices, and vertices lying in the parts prescribed by $\sigma_1$. Since both parts have size at least $n/3$, there are at least $(n/4)^{m-4}$ such tuples for all sufficiently large $n$.

Suppose that every odd consecutive $4$-tuple $W_j(Y)$ with $j\ne m$ is an edge of $H$.
Then the present $4$-tuples $\left\{W_j(Y)\colon j\ne m\right\}$ form a copy of $C_{4k+2}^{4-}$.
Here $W_1=acbd$ is present because $acbd\in\mathcal B\subseteq H$, and $W_m$ is the single omitted $4$-tuple.

We next estimate how many admissible tuples are ruled out by missing odd $4$-tuples involving the fixed vertices $a,b,c,d$.
The first such $4$-tuples are $W_2=cbdy_5$, $W_3=bdy_5y_6$, and $W_{m-1}=y_{m-1}y_mac$.
If $W_2$ is missing, then it is a missing template edge containing the fixed triple $bcd$; hence there are at most $d_{\mathcal M}(bcd)$ choices for $y_5$, and then at most $n^{m-5}$ choices for the remaining positions.
Similarly, missing choices for $W_3$ are controlled by the number of missing template edges containing the fixed pair $bd$, and missing choices for $W_{m-1}$ are controlled by the number of missing template edges containing the fixed pair $ac$.
Thus, if
\[
d_{\mathcal M}(bcd)<c_0n,
\qquad d_{\mathcal M}(bd)<c_0n^2,
\qquad\text{and}\qquad d_{\mathcal M}(ac)<c_0n^2
\]
then these three $4$-tuples eliminate fewer than $5c_0n^{m-4}$ admissible tuples.

If $k=1$, then $m=6$, and the remaining odd $4$-tuple involving fixed vertices is $W_4=dy_5y_6a$, which is controlled by $d_{\mathcal M}(ad)$. Hence, if $d_{\mathcal M}(ad)<c_0n^2$, then this $4$-tuple eliminates fewer than $2c_0n^{m-4}$ additional tuples. If $k\ge2$, then the two additional odd $4$-tuples involving fixed vertices are $W_4=dy_5y_6y_7$ and $W_{m-2}=y_{m-2}y_{m-1}y_ma$.
Each contains exactly one fixed vertex. By \cref{clm:vertex-missing}, these two $4$-tuples eliminate at most $12\varepsilon_3 n^{m-4}$ admissible tuples.

It remains to deal with the internal $4$-tuples. When $k=1$, there are no internal $4$-tuples. When $k\ge2$, a fixed missing edge can appear in one prescribed internal $4$-tuple in at most $24n^{m-8}$ admissible tuples. Since $4|\mathcal M|=\sum_{x\in V(H)}d_{\mathcal M}(x)\le \varepsilon_3 n^4$, all internal $4$-tuples together eliminate at most $24m\varepsilon_3 n^{m-4}$ tuples. If the conclusion of \cref{itm:local-forcing-same} fails, then the number of excluded admissible tuples is less than $(10c_0+(12+24m)\varepsilon_3)n^{m-4}$. 
By the hierarchy $0<\varepsilon_3\ll c_0\ll_k1$, this is smaller than $(n/4)^{m-4}$. Hence some admissible tuple remains. For this tuple, all required odd $4$-tuples are present, and therefore it yields a copy of $C_{4k+2}^{4-}$, a contradiction. This proves \cref{itm:local-forcing-same} when $c,d\in V_i$.

Now assume that $c,d\in V_{3-i}$. Use $\sigma_2$ with the same initial segment. The even $4$-tuples are again $W_1$ and $W_m$, and the estimates for the odd $4$-tuples involving fixed vertices and for the internal $4$-tuples are identical after interchanging the two parts. This completes the proof of \cref{itm:local-forcing-same}.

We now prove \cref{itm:local-forcing-cross}. Use $\sigma_3$. Fix $(y_1, y_2, y_3, y_4) \coloneqq (a, c, d, b)$, and let $Y$ range over all admissible ordered $m$-tuples whose vertices lie in the parts prescribed by $\sigma_3$. Again there are at least $(n/4)^{m-4}$ such tuples for all sufficiently large $n$.
For $\sigma_3$, the even $4$-tuples are $W_1$ and $W_4$. Hence, if all odd $4$-tuples of $Y$ are present in $H$, then $\left\{W_j(Y)\colon j\ne4\right\}$ is a copy of $C_{4k+2}^{4-}$, because $W_1=acdb\in\mathcal B\subseteq H$, while $W_4$ is the omitted $4$-tuple.

The odd $4$-tuples involving fixed vertices are $W_2=cdby_5$, $W_3=dby_5y_6$, $W_{m-1}=y_{m-1}y_mac$, and $W_m=y_macd$.
They are controlled respectively by $d_{\mathcal M}(cdb)$, $d_{\mathcal M}(bd)$, $d_{\mathcal M}(ac)$, and $d_{\mathcal M}(acd)$. Thus, if
\[
d_{\mathcal M}(cdb)<c_0n,
\qquad d_{\mathcal M}(acd)<c_0n,
\qquad d_{\mathcal M}(bd)<c_0n^2,
\qquad\text{and}\qquad d_{\mathcal M}(ac)<c_0n^2
\]
then these four $4$-tuples eliminate fewer than $6c_0n^{m-4}$ admissible tuples.

If $k\ge2$, there is one additional odd $4$-tuple involving a fixed vertex, $W_{m-2}=y_{m-2}y_{m-1}y_ma$, which contains exactly one fixed vertex. By \cref{clm:vertex-missing}, it eliminates at most $6\varepsilon_3 n^{m-4}$ admissible tuples. If $k=1$, then $W_{m-2}=W_4$, which is even and omitted. The internal $4$-tuples are controlled exactly as above. Hence, if the conclusion of \cref{itm:local-forcing-cross} fails, the number of excluded admissible tuples is less than $(10c_0+(12+24m)\varepsilon_3)n^{m-4}$, which is smaller than $(n/4)^{m-4}$. Some admissible tuple remains, giving a copy of $C_{4k+2}^{4-}$, a contradiction. This proves \cref{itm:local-forcing-cross}.
\end{proof}

We first use \cref{clm:local-forcing} to bound pair-degrees of bad edges.

\begin{claim}\label{clm:pair-bad}
For every pair $xy\in\binom V2$, we have $d_{\mathcal B}(xy)<\varepsilon_2n^2$. 
\end{claim}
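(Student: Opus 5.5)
We argue by contradiction: fix a pair $xy$ with $d_{\mathcal B}(xy)\ge\varepsilon_2n^2$ and use \cref{clm:local-forcing} to force many missing template edges near $x$ or $y$. This will contradict \cref{clm:vertex-missing}, which gives $d_{\mathcal M}(v)\le\varepsilon_3n^3$ for every vertex, since $\varepsilon_3\ll\varepsilon_2$.

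The first step is to classify the bad edges containing $xy$. Each $4$-set $xyzw\in\mathcal B$ is an even $4$-set with respect to $V_1\sqcup V_2$, so its pattern is $0000$, $1111$ or $0011$ up to swapping parts. By pigeonhole over the finitely many color patterns of $(x,y,z,w)$, we keep a subfamily of at least $\varepsilon_2n^2/10$ bad edges through $xy$ all having the same pattern. We then label the four vertices as $a,b,c,d$ so that one of the two items of \cref{clm:local-forcing} applies, with $x,y$ placed among the fixed roles in a chosen way. If $x,y$ lie in the same part, we want $\{x,y\}=\{a,c\}$ where possible; if they lie in different parts, we use item~(ii) with $x=a$ and $y=b$, or item~(i) with a suitable assignment. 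The alternatives in the conclusions then fall into two kinds.

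\emph{Alternative 1: a large triple degree.} Here some triple in the edge has $d_{\mathcal M}\ge c_0n$. Whenever that triple contains $x$ (or $y$), we count pairs: each such triple is extended by at least $c_0n$ missing edges. Summing over the $\Omega(\varepsilon_2n^2)$ relevant triples, and dividing by the bounded number of times a single missing edge through $x$ is counted, gives $d_{\mathcal M}(x)\ge\Omega(c_0\varepsilon_2n^3)$. This exceeds $\varepsilon_3n^3$, a contradiction.

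\emph{Alternative 2: a large pair degree.} Here some pair in the edge has $d_{\mathcal M}\ge c_0n^2$. If that pair contains $x$, then $d_{\mathcal M}(x)\ge c_0n^2\cdot\Omega(1)$. More carefully, the pairs $xz$ with $d_{\mathcal M}(xz)\ge c_0n^2$ give $d_{\mathcal M}(x)\ge \tfrac13 c_0n^2\cdot|\{z\}|$. Since many distinct $z$ occur among $\Omega(\varepsilon_2n^2)$ edges, there are at least $\Omega(\varepsilon_2n)$ such $z$. This yields $d_{\mathcal M}(x)\ge\Omega(c_0\varepsilon_2n^3)>\varepsilon_3n^3$.

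The main obstacle is the case where the large-degree set guaranteed by \cref{clm:local-forcing} avoids both $x$ and $y$, for example $d_{\mathcal M}(zw)\ge c_0n^2$ or a triple that omits $x$. I would handle this by choosing the labeling so that every pair and triple listed in the conclusion meets $\{x,y\}$. In item~(i) the listed sets are $bcd$, $ac$, $ad$, $bd$. Taking $x=b$ and $y=a$ makes every listed set contain $x$ or $y$. This requires $a,b$ in the same part, which holds whenever $x,y$ are in the same part. In item~(ii) the listed sets are $cdb$, $acd$, $ac$, $bd$. Taking $x=a$ and $y=b$ (in different parts) makes $ac$, $bd$, $cdb$ and $acd$ each contain $a$ or $b$. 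So every alternative produces large missing degree at $x$ or at $y$, with constant-factor losses only. Summing over the $\Omega(\varepsilon_2n^2)$ bad edges and splitting according to which of $x,y$ receives the weight, one of them gets $d_{\mathcal M}\ge\Omega(c_0\varepsilon_2 n^3)$, contradicting \cref{clm:vertex-missing} by the hierarchy $\varepsilon_3\ll\varepsilon_2\ll c_0$.
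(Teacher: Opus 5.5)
Your proposal is correct and follows essentially the paper's proof: you apply \cref{clm:local-forcing}\,(i) to $acbd$ with $\{a,b\}=\{x,y\}$ when $x,y$ share a part and (ii) to $acdb$ with $a=x$, $b=y$ otherwise, so that every listed pair or triple meets $\{x,y\}$. You then split into the triple-heavy and pair-heavy alternatives and double count against \cref{clm:vertex-missing}. The only cosmetic difference is that you get $\Omega(\varepsilon_2 n)$ distinct heavy pairs by noting that each witnessing pair accounts for at most $n$ bad edges, whereas the paper extracts them from a maximal matching in the ``red'' graph; the two arguments are equivalent.
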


\begin{proof}
Suppose, to the contrary, that $d_{\mathcal B}(ab)\ge\varepsilon_2n^2$ for some pair $ab \in \binom V2$.

First suppose that $a,b$ lie in the same part; by symmetry, take $a,b\in V_1$. Let
\[
L_{11}\coloneqq\left\{cd\in\tbinom{V_1\setminus\{a,b\}}2\colon abcd\in\mathcal B\right\},
\quad\text{and}\quad 
L_{22}\coloneqq\left\{cd\in\tbinom{V_2}2\colon abcd\in\mathcal B\right\}.
\]
One of these graphs, say $L$, has at least $\varepsilon_2n^2/2$ edges. For each $cd\in L$, apply \cref{clm:local-forcing}\labelcref{itm:local-forcing-same} to $acbd$. 
We call a pair $cd$ \emph{blue} if $d_{\mathcal M}(bcd)\ge c_0n$, and \emph{red} otherwise.

If at least half of the edges of $L$ are blue, then each corresponding missing edge contains $b$ and is counted at most three times over the triples $bcd$. Hence
\[
3d_{\mathcal M}(b)\ge \frac{|L|}{2}c_0n\ge \frac{c_0\varepsilon_2}{4}n^3,
\]
contradicting \cref{clm:vertex-missing}. Hence at least half are \emph{red}.
Let $M_{\mathrm{red}}$ be a maximal matching in the \emph{red} subgraph. Since every \emph{red} edge meets a vertex covered by $M_{\mathrm{red}}$, the \emph{red} subgraph has at most $2|M_{\mathrm{red}}|n$ edges. As it has at least $|L|/2\ge\varepsilon_2n^2/4$ edges, we have $|M_{\mathrm{red}}|\ge\varepsilon_2n/8$. For each $cd\in M_{\mathrm{red}}$, choose $p(cd)\in\{ac,ad,bd\}$ with $d_{\mathcal M}(p(cd))\ge c_0n^2$. These selected pairs are distinct. Moreover, by the matching property, a fixed missing edge contributes to $\sum_{cd\in M_{\mathrm{red}}}d_{\mathcal M}(p(cd))$ through at most three selected pairs, and every selected pair contains $a$ or $b$. Hence
\[
3(d_{\mathcal M}(a)+d_{\mathcal M}(b))
\ge \sum_{cd\in M_{\mathrm{red}}}d_{\mathcal M}(p(cd))
\ge |M_{\mathrm{red}}|c_0n^2
\ge \frac{c_0\varepsilon_2}{8}n^3,
\]
again contradicting \cref{clm:vertex-missing}.

It remains to consider the crossing case. Assume $a\in V_1$ and $b\in V_2$. Let $L_{12}$ be the bipartite graph between $V_1\setminus\{a\}$ and $V_2\setminus\{b\}$ whose edges are the pairs $cd$ such that $acdb\in\mathcal B$. Then $|L_{12}|\ge\varepsilon_2n^2$. For each $cd\in L_{12}$, apply \cref{clm:local-forcing}\labelcref{itm:local-forcing-cross}. Put
\[
L_{\mathrm I}\coloneqq\left\{cd\in L_{12}\colon d_{\mathcal M}(cdb)\ge c_0n\right\},
\quad 
L_{\mathrm{II}}\coloneqq\left\{cd\in L_{12}\colon d_{\mathcal M}(acd)\ge c_0n\right\},
\quad\text{and}\quad 
L_{\mathrm R}\coloneqq L_{12}\setminus(L_{\mathrm I}\cup L_{\mathrm{II}}). 
\]
If $|L_{\mathrm I}|\ge |L_{12}|/4$, then each corresponding missing edge contains $b$ and is counted at most three times, so $3d_{\mathcal M}(b)\ge |L_{12}|c_0n/4$, contradicting \cref{clm:vertex-missing}. The same argument with $a$ in place of $b$ excludes $|L_{\mathrm{II}}|\ge |L_{12}|/4$. Hence $|L_{\mathrm R}|>|L_{12}|/2$. Let $M_{\mathrm R}$ be a maximal matching in $L_{\mathrm R}$. Since every edge of $L_{\mathrm R}$ meets a vertex covered by this matching, $|L_{\mathrm R}|\le2|M_{\mathrm R}|n$, and therefore $|M_{\mathrm R}|\ge\varepsilon_2n/4$. For each $cd\in M_{\mathrm R}$, choose $p(cd)\in\{ac,bd\}$ with $d_{\mathcal M}(p(cd))\ge c_0n^2$. These selected pairs are distinct, and each missing edge contributes through at most three selected pairs. Thus
\[
3(d_{\mathcal M}(a)+d_{\mathcal M}(b))
\ge \sum_{cd\in M_{\mathrm R}}d_{\mathcal M}(p(cd))
\ge |M_{\mathrm R}|c_0n^2
\ge \frac{c_0\varepsilon_2}{4}n^3,
\]
contradicting \cref{clm:vertex-missing}. This proves the claim.
\end{proof}

It remains to compare the total contribution of bad edges with the number of missing template edges.  The following \emph{potential function} measures how strongly a bad edge is forced by missing pairs and triples.
For each bad edge $E\in\mathcal B$, define
\[
\Phi(E)\coloneqq\sum_{e\in\binom E2}\frac{d_{\mathcal M}(e)}{d_{\mathcal B}(e)}
+\sum_{f\in\binom E3}\frac{d_{\mathcal M}(f)}{d_{\mathcal B}(f)}.
\]
The denominators are non-zero because every pair and triple appearing in the sums is contained in the bad edge $E$. By double counting,
\[
\begin{aligned}
\sum_{E\in\mathcal B}\Phi(E)
=\sum_{e\in\binom V2:\ d_{\mathcal B}(e)>0} d_{\mathcal M}(e)
 +\sum_{f\in\binom V3:\ d_{\mathcal B}(f)>0} d_{\mathcal M}(f)
\le \sum_{e\in\binom V2}d_{\mathcal M}(e)
    +\sum_{f\in\binom V3}d_{\mathcal M}(f)
=10|\mathcal M|.
\end{aligned}
\]
For each triple $T\in\binom V3$, define $\mathcal B(T)\coloneqq\left\{E\in\mathcal B\colon T\subseteq E\right\}$.

\begin{claim}\label{clm:B-of-T-all}
For every triple $T$ with $\mathcal B(T)\ne\varnothing$, we have $\sum_{E\in\mathcal B(T)}\Phi(E)>10|\mathcal B(T)|$. 
\end{claim}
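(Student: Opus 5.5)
The plan is to split $\sum_{E\in\mathcal B(T)}\Phi(E)$ into the contribution of the triple $T$ itself and the contributions of pairs. Since every $E\in\mathcal B(T)$ contains $T$ and $|\mathcal B(T)|=d_{\mathcal B}(T)$, the term $f=T$ contributes exactly
\[
\sum_{E\in\mathcal B(T)}\frac{d_{\mathcal M}(T)}{d_{\mathcal B}(T)}=d_{\mathcal M}(T)
\]
in total. Every pair $e\subseteq E$ satisfies $d_{\mathcal B}(e)<\varepsilon_2n^2$ by \cref{clm:pair-bad}. So if some pair $e\subseteq E$ has $d_{\mathcal M}(e)\ge c_0n^2$, then $\Phi(E)\ge c_0/\varepsilon_2>10$ by the hierarchy $\varepsilon_2\ll c_0$. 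It therefore suffices to show two things. First, the edges $E\in\mathcal B(T)$ with no such heavy pair are all "charged" to the single quantity $d_{\mathcal M}(T)$. Second, $d_{\mathcal M}(T)>10$ times their number.

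For each $E=T\cup\{x\}\in\mathcal B(T)$, I will apply \cref{clm:local-forcing} with a labeling of $E$ chosen according to the part pattern of $E$. The goal is that the forced triple is $T$ and the forced pairs either lie inside $T$ or contain $x$.
\begin{itemize}
\item In case \ref{itm:local-forcing-same}, the triple is $bcd$, so I want $a=x$. This is possible whenever $x$ shares its part with some vertex of $T$ allowed to play $b$. If not, I relabel with $x\in\{c,d\}$ and accept a forced triple containing $x$.
\item In case \ref{itm:local-forcing-cross}, the triple is $cdb$ or $acd$, and I will choose $x\in\{a,b\}$ so that one of the two options is $T$.
\end{itemize}
I will do a short case analysis over the part patterns of $E$: $4$ vertices in one part, a $3+1$ split, or a $2+2$ split (same-type or crossing). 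In each case I will record which forced triples and pairs occur. I expect this bookkeeping to be routine but fiddly.

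Next I will sort the $x$ by which alternative of \cref{clm:local-forcing} holds.
\begin{itemize}
\item If a pair inside $T$ has $d_{\mathcal M}\ge c_0n^2$, then every $E\in\mathcal B(T)$ gets $\Phi(E)>10$ and we are done.
\item Vertices $x$ for which a pair containing $x$ is heavy also have $\Phi(E)>10$ individually, so they cause no trouble.
\item For vertices $x$ whose forced object is a triple containing $x$ with $d_{\mathcal M}\ge c_0n$, the missing edges through distinct such triples share a fixed pair from $T$ and are counted boundedly often. Hence by \cref{clm:vertex-missing} there are at most $O(\varepsilon_3 n/c_0)$ such $x$.
\end{itemize}
These finitely many exceptional $x$ must still be absorbed. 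I would do this either by showing that these $E$ also carry a pair term exceeding $10$, or by bounding their number below $c_0 n/20$. For the remaining set $R$ of extensions $x$, the conclusion is $d_{\mathcal M}(T)\ge c_0n$.

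The main obstacle is the final comparison $d_{\mathcal M}(T)>10|R|$. For this I need an upper bound $d_{\mathcal B}(T)\le c_0n/20$ on the bad degree of triples, in the spirit of \cref{clm:pair-bad}. My first attempt will be the following. Suppose $d_{\mathcal B}(T)\ge\varepsilon_1 n$. Re-apply \cref{clm:local-forcing} to the many edges $T\cup\{x\}$, but now with a labeling that places $x$ in a position whose forced triples and pairs all contain $x$. This yields $\Omega(\varepsilon_1 n)$ distinct heavy triples or pairs through a fixed vertex of $T$. That would force $d_{\mathcal M}(v)\gg\varepsilon_3n^3$ for some $v\in T$, contradicting \cref{clm:vertex-missing}, since $\varepsilon_3\ll\varepsilon_1$.

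If the labeling constraints of \cref{clm:local-forcing} do not allow this in some part pattern, I will fall back on the link structure. \cref{clm:vertex-missing} says that $L_H(v)$ is close to $\mathsf O(A,B)$ for every $v$. Combined with maximality of the partition, this should bound the bad codegree of triples directly. Once $d_{\mathcal B}(T)<c_0n/20$ is available, we get $d_{\mathcal M}(T)\ge c_0 n>10\,d_{\mathcal B}(T)\ge 10|R|$, and together with the pair-term estimates this gives the claim.
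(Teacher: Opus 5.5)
\textbf{Gap: your argument needs a bound $d_{\mathcal B}(T)\le c_0n/20$ that you cannot prove.}

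Your treatment of small bad codegree is correct, and simpler than you feared. A bad edge meets each part in an even number of vertices, so the $3+1$ pattern never occurs. Hence the labeling of \cref{clm:local-forcing}\textup{(i)} with $x$ in position $a$, and $b$ the vertex of $T$ in the part of $x$, is always available. So every $E\in\mathcal B(T)$ either contains a pair with $d_{\mathcal M}\ge c_0n$ or forces $d_{\mathcal M}(T)\ge c_0n$. This proves the claim when $d_{\mathcal B}(T)<\varepsilon_1 n$, which is the first case of the paper's proof.

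For the other case you need $d_{\mathcal B}(T)\le c_0n/20$ for every triple, and your route cannot give it:
\begin{itemize}
\item Heavy triples are too weak to contradict \cref{clm:vertex-missing}. Having $\Omega(\varepsilon_1 n)$ triples $abx$ with $d_{\mathcal M}(abx)\ge c_0n$ only yields $d_{\mathcal M}(a)\ge d_{\mathcal M}(ab)=\Omega(c_0\varepsilon_1 n^2)$, far below $\varepsilon_3n^3$.
\item The same miscount affects your exceptional $x$: \cref{clm:vertex-missing} bounds their number only by $2\varepsilon_3n^2/c_0$, which is vacuous.
\item No labeling in \cref{clm:local-forcing} makes all forced pairs and triples contain $x$. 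In \textup{(i)}, the pairs $a'c',a'd'$ force $x=a'\notin b'c'd'$; in \textup{(ii)}, $a'c'\cap b'd'=\varnothing$.
\item The link/maximality fallback fails too. The bound $|L_H(v)\triangle\mathsf O(A,B)|\le\varepsilon_3n^3$ allows a single link pair to have bad degree $\Theta(n)$.
\end{itemize}

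\textbf{Missing idea: charge to the second triple through the varying vertex.} When $\mathcal B(T)$ is large, do not charge to $T$; use the normalization in $\Phi$ on the triple containing the varying vertex. Write $T=abc$ with $a,b$ in one part, so $\mathcal B(T)=\{abcd\colon d\in N\}$ with $c,d$ in a common part. Each bad edge through $ab$ is counted at most twice in $\sum_{d\in N}d_{\mathcal B}(abd)$. By \cref{clm:pair-bad}, at most $2\varepsilon_2 n/\varepsilon_1$ edges $abcd$ are heavy, i.e.\ have $d_{\mathcal B}(abd)>\varepsilon_1 n$. If $|\mathcal B(T)|\ge\varepsilon_1 n$ and $2\varepsilon_2\le\varepsilon_1^3$, this is at most $\varepsilon_1|\mathcal B(T)|$.

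For each remaining (light) edge, apply \cref{clm:local-forcing}\textup{(i)} with $(a',c',b',d')=(c,a,d,b)$. Either some pair is heavy, giving $\Phi(E)>c_0/\varepsilon_2$. Or $d_{\mathcal M}(abd)\ge c_0n$, giving $\Phi(E)\ge d_{\mathcal M}(abd)/d_{\mathcal B}(abd)\ge c_0/\varepsilon_1$. Summing over light edges gives $\sum_{E\in\mathcal B(T)}\Phi(E)\ge(1-\varepsilon_1)(c_0/\varepsilon_1)|\mathcal B(T)|>10|\mathcal B(T)|$.

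A large bad codegree of $T$ is never ruled out. It is compensated by the many light triples $abd$, which is why $\Phi$ divides by $d_{\mathcal B}(f)$.
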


\begin{proof}
Relabel $T=abc$ so that $a,b$ lie in the same part. Since a bad edge has even intersection with each side of the odd-bipartite template, every edge in $\mathcal B(abc)$ has the form $abcd$, where $c,d$ lie in the same part.

First suppose $|\mathcal B(abc)|<\varepsilon_1n$. For $E=abcd\in\mathcal B(abc)$, apply \cref{clm:local-forcing}\labelcref{itm:local-forcing-same} with $(a',c',b',d')=(d,a,c,b)$, so the bad edge $a'c'b'd'$ is $dacb=abcd$. Then either $d_{\mathcal M}(abc)\ge c_0n$, or some pair $e\subseteq E$ satisfies $d_{\mathcal M}(e)\ge c_0n^2$. In the first case, $\Phi(E)\ge c_0/\varepsilon_1>10$, and in the second case, $\Phi(E)>c_0/\varepsilon_2>10$ by \cref{clm:pair-bad}. Hence the desired inequality holds.

Now suppose $|\mathcal B(abc)|\ge\varepsilon_1n$. Write $N\coloneqq\left\{d\colon abcd\in\mathcal B(abc)\right\}$ and $\mathcal B(abc)=\left\{abcd\colon d\in N\right\}$. 
Call $abcd$ \emph{light} if $d_{\mathcal B}(abd)\le\varepsilon_1n$, and \emph{heavy} otherwise. Let $\mathcal B_{\mathrm L}(abc)$ and $\mathcal B_{\mathrm H}(abc)$ be the corresponding sets.
Each bad edge containing $ab$ contributes to at most two of the numbers $d_{\mathcal B}(abd)$, according to which of its two vertices outside $ab$ is chosen as $d$. Hence
\[
2d_{\mathcal B}(ab)
\ge \sum_{abcd\in\mathcal B_{\mathrm H}(abc)}d_{\mathcal B}(abd)
>\varepsilon_1n|\mathcal B_{\mathrm H}(abc)|.
\]
By \cref{clm:pair-bad} and the hierarchy $2\varepsilon_2\le\varepsilon_1^3$, we have
\[
|\mathcal B_{\mathrm H}(abc)|\le\frac{2d_{\mathcal B}(ab)}{\varepsilon_1n}
\le\varepsilon_1|\mathcal B(abc)|,
\quad\text{and}\quad
|\mathcal B_{\mathrm L}(abc)|\ge(1-\varepsilon_1)|\mathcal B(abc)|.
\]
If $E=abcd$ is \emph{light}, apply \cref{clm:local-forcing}\labelcref{itm:local-forcing-same} with $(a',c',b',d')=(c,a,d,b)$, so the bad edge $a'c'b'd'$ is $cadb=abcd$. Then either $d_{\mathcal M}(abd)\ge c_0n$, giving $\Phi(E)\ge c_0/\varepsilon_1$, or some pair $e\subseteq E$ has $d_{\mathcal M}(e)\ge c_0n^2$, giving $\Phi(E)>c_0/\varepsilon_2\ge c_0/\varepsilon_1$.
Therefore
\[
\sum_{E\in\mathcal B(abc)}\Phi(E)
\ge \frac{c_0}{\varepsilon_1}|\mathcal B_{\mathrm L}(abc)|
\ge \frac{c_0}{\varepsilon_1}(1-\varepsilon_1)|\mathcal B(abc)|
>10|\mathcal B(abc)|,
\]
provided the hierarchy $0<\varepsilon_2\ll\varepsilon_1\ll c_0\ll_k1$ is chosen appropriately. This proves the claim.
\end{proof}

We complete the reduced argument by summing the potential-function estimate over all triples. Assume first that $\mathcal B\ne\varnothing$. Summing \cref{clm:B-of-T-all} over all triples $T$ with $\mathcal B(T)\ne\varnothing$, and using the fact that each bad edge contains exactly four triples, we obtain
\[
4\sum_{E\in\mathcal B}\Phi(E)
=\sum_{T\in\binom V3}\sum_{E\in\mathcal B(T)}\Phi(E)
>10\sum_{T\in\binom V3}|\mathcal B(T)|
=40|\mathcal B|.
\]
Hence $\sum_{E\in\mathcal B}\Phi(E)>10|\mathcal B|$. 
Together with $\sum_{E\in\mathcal B}\Phi(E)\le10|\mathcal M|$, this implies $|\mathcal M|>|\mathcal B|$. By \eqref{eq:H-decomposition-final}, $|H|=|\mathcal T|-|\mathcal M|+|\mathcal B|<|\mathcal T|\le b(n)$, contradicting $|H|\ge b(n)$. Thus $\mathcal B=\varnothing$. Then $|H|=|\mathcal T|-|\mathcal M|$.
Since $|H|\ge b(n)\ge |\mathcal T|$, we have $\mathcal M=\varnothing$ and $|\mathcal T|=b(n)$. Therefore $H=\mathcal T=\mathbb{B}_4^{\mathrm{odd}}(V_1,V_2)$, and $|\mathcal T|=b(n)$.
This proves the reduced assertion required by \cref{lem:min-degree-reduction}. Together with \cref{lem:odd-bip-Fk-free}, this gives both $\ex(n,C_{4k+2}^{4-})=b(n)$ and the equality statement in \cref{THM:main_C_4k+2minus}.
\end{proof}

\section{Concluding remarks}\label{SEC:Remarks}

For full tight cycles not settled here, the complete odd-bipartite construction gives the lower bound $1/2$.
The exact rounded flag algebra computations in \texttt{FourGraph\_C6.ipynb}, \texttt{FourGraph\_C11.ipynb}, and \texttt{FourGraph\_C13.ipynb} give the upper bounds in \cref{TAB:remarks-bounds}.

For the full cycles in \cref{THM:main_C_4k+2}, the complete odd-bipartite construction is asymptotically extremal, but it is not extremal for the exact Tur\'{a}n problem.
The same non-extremality already occurs for $C_6^4$.

\begin{proposition}\label{prop:full-cycle-extra-edges}
Let $k\ge1$ be fixed.  For all sufficiently large $n$, there exists a $C_{4k+2}^{4}$-free $4$-graph on $n$ vertices with at least $b_4(n)+\Omega(n)$ edges.
More precisely, if $A\sqcup B$ is a partition attaining $b_4(n)$, then one can add $\Omega(n)$ further edges to $\mathbb{B}_{4}^{\mathrm{odd}}(A,B)$ without creating a copy of $C_{4k+2}^{4}$.
\end{proposition}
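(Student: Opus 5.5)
The plan is to add to $\mathbb{B}_{4}^{\mathrm{odd}}(A,B)$ a \emph{sunflower} of even edges inside $A$ whose pairwise intersections are a fixed pair. Fix two vertices $x,y\in A$ and a maximal family of pairwise disjoint pairs $\{u_t,w_t\}\subseteq A\setminus\{x,y\}$, where $1\le t\le \lfloor(|A|-2)/2\rfloor$. Put $\mathcal S\coloneqq\{xyu_tw_t\}$ and $G\coloneqq\mathbb{B}_4^{\mathrm{odd}}(A,B)\cup\mathcal S$. Since $|A|\ge n/3$ for an optimal partition (the part sizes differ by $\sqrt{3n}+O(1)$), we get $|\mathcal S|=\Omega(n)$. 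Every member of $\mathcal S$ lies inside $A$, so it has even intersection with $A$ and is not a template edge. Two distinct members of $\mathcal S$ meet in exactly $\{x,y\}$.

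Suppose $G$ contains a copy of $C_m^4$, with $m=4k+2$, on cyclically ordered vertices $x_1,\dots,x_m$. As in the proof of \cref{lem:odd-bip-Fk-free}, let $\eta_i$ be the indicator of $x_i\in B$ and let $s_i$ be the parity of $\eta_i+\cdots+\eta_{i+3}$. A window (a set of four consecutive vertices) is a template edge exactly when $s_i=1$. Every window with $s_i=0$ must therefore be a member of $\mathcal S$. Since $\sum_i s_i\equiv 0\pmod 2$ and $m$ is even, the number of windows with $s_i=0$ is even. The argument in \cref{lem:odd-bip-Fk-free} shows that not all $s_i$ equal $1$. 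Hence at least two distinct windows, say $W_i\ne W_j$, lie in $\mathcal S$. Distinct windows of a tight cycle of length $m\ge6$ are distinct $4$-sets.

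It remains to derive a contradiction from two windows of the cycle meeting in exactly the pair $\{x,y\}$, with all eight positions involved lying in $A$. The key is to locate where $x$ and $y$ sit in the cyclic order. Two windows at cyclic distance $d\le 3$ share $4-d$ vertices, and otherwise they are disjoint. The only exception is $m=6$, where windows at distance $3$ share two vertices by wrapping around. So either $j=i+2$, or $m=6$ and $j=i+3$.

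In the case $j=i+2$, positions $i,\dots,i+5$ all lie in $A$. The intermediate window $W_{i+1}$ is then an all-$A$ $4$-set. It is even, so it is not a template edge, and it would have to belong to $\mathcal S$. But it shares three vertices with $W_i$, which is impossible for distinct members of $\mathcal S$. In the case $m=6$, $j=i+3$, the two windows cover all six positions, so every window is all-$A$ and must lie in $\mathcal S$. Again $W_{i+1}$ shares three vertices with $W_i$, a contradiction. Hence $G$ is $C_{4k+2}^4$-free, and $|G|=b_4(n)+\Omega(n)$.

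The step I expect to need the most care is the case analysis of how two windows can intersect in exactly two vertices. One must confirm that in every case some even all-$A$ window overlaps one of them in three vertices, and I would double-check the small case $m=6$ separately. The sunflower structure is essential here. Vertex-disjoint added edges fail for $k\ge2$, since two disjoint all-$A$ windows can be realised in $C_{10}^4$. Added edges sharing a common triple fail even for $k=1$, since two consecutive all-$A$ windows can be completed to a $C_6^4$.
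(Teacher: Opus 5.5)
Your proof is correct. Your construction (a sunflower of all-$A$ edges whose kernel is a fixed pair) is exactly the paper's, but you extract the contradiction differently.

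\textbf{The paper's route.} It refines the parity count. Since $m$ is even, each cycle vertex lies in exactly two windows of each index parity. Each parity class has $2k+1$ windows, so each class must contain an odd number of added edges. This produces two added edges at odd cyclic distance. For $m\ge10$, windows at odd distance meet in $3$, $1$ or $0$ vertices, never in exactly the kernel pair, so that case closes immediately. The case $m=6$ then needs its own small analysis: at most one added edge per parity class, distances $1$ and $5$ are excluded, and distance $3$ forces all six vertices into $A$.

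\textbf{Your route.} You use only the global parity from \cref{lem:odd-bip-Fk-free}: the number of non-template windows is even and nonzero. The sunflower intersection pattern then pins two added windows at distance $2$, or $3$ when $m=6$. Both cases die by the same observation: the intermediate window $W_{i+1}$ is all-$A$, so it must be an added edge, yet it meets $W_i$ in three vertices.

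\textbf{Comparison.} Your version reuses the existing lemma and handles $m=6$ and $m\ge10$ uniformly. The paper's version yields the slightly stronger fact that added edges occur in both parity classes, which reduces the generic case to a one-line intersection check at the cost of a separate $m=6$ argument.
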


\begin{proof}
Since the edge size is even, interchanging the two parts does not change $\mathbb{B}_{4}^{\mathrm{odd}}(A,B)$, so assume $|A|\ge |B|$.
Fix a pair $P\in\binom A2$, and choose pairwise disjoint pairs
$Q_1,\ldots,Q_t$ in $A\setminus P$, where $t=\lfloor(|A|-2)/2\rfloor$.
Put $\mathcal M\coloneqq\{P\cup Q_i:1\le i\le t\}$ and $H\coloneqq\mathbb{B}_{4}^{\mathrm{odd}}(A,B)\cup\mathcal M$.
Every edge in $\mathcal M$ is a missing edge of the odd-bipartite template, and $|\mathcal M|=t=\Omega(n)$ since $|A|\ge n/2$.
We claim that $H$ is $C_{4k+2}^{4}$-free.

Suppose, for a contradiction, that $v_1,\ldots,v_m$ form a copy of $C_m^4$ in $H$, where $m=4k+2$.
Write $E_i\coloneqq v_iv_{i+1}v_{i+2}v_{i+3}$ for its cyclic edges, where $i\in\mathbb Z/m\mathbb Z$, and let $I\coloneqq\{i:E_i\in\mathcal M\}$.
For each parity class $s\in\{0,1\}$, every vertex of the cycle occurs in exactly two edges $E_i$ with $i\equiv s\pmod2$.
Thus $\sum_{i\equiv s\pmod2}|E_i\cap A|\equiv0\pmod2$.
There are $2k+1$ indices in each parity class.
The edges not in $\mathcal M$ have odd intersection with $A$, while the edges in $\mathcal M$ have even intersection with $A$.
It follows that $|\{i\in I:i\equiv s\pmod2\}|$ is odd for each $s\in\{0,1\}$.
In particular, there are two edges of the copy belonging to $\mathcal M$ whose cyclic indices have opposite parity.

If $m\ge10$, two edges of a tight $m$-cycle with indices of opposite parity cannot intersect in exactly two vertices: their cyclic distance is odd, and their intersection size is $3$, $1$, or $0$.
On the other hand, any two distinct edges of $\mathcal M$ intersect exactly in the fixed pair $P$, a contradiction.

It remains to consider $m=6$.
The set $\{i\in I:i\equiv s\pmod2\}$ cannot have size $3$ for either parity $s$, since the three cycle edges in one parity class have pairwise intersections equal to three different pairs, whereas any two distinct edges of $\mathcal M$ intersect in the same pair $P$.
Thus each parity class contains exactly one edge of $\mathcal M$.
If these two edges have cyclic distance $1$ or $5$, then they intersect in three vertices, again impossible for two distinct edges of $\mathcal M$.
Hence they have cyclic distance $3$.
Their union is the whole vertex set of the $C_6^4$, so all six cycle vertices lie in $A$.
Then every cycle edge has even intersection with $A$, and hence every cycle edge would have to belong to $\mathcal M$; but adjacent cycle edges intersect in three vertices, while distinct edges of $\mathcal M$ intersect in exactly $P$.
This final contradiction proves that $H$ is $C_m^4$-free.
\end{proof}

The cycles minus one edge in the other two congruence classes appear to have a different extremal construction.
Starting from a nearly balanced bipartition $V_1\cup V_2$, one may take all $4$-sets meeting each part in exactly two vertices and then iterate the construction inside both parts, following Sidorenko~\cite{Sid24}.
This gives $C_{\ell}^{4-}$-free examples of density $\frac37$ for all $\ell\equiv1,3\pmod4$.
The flag algebra computations in \texttt{FourGraph\_C7m.ipynb}, \texttt{FourGraph\_C9m\_7vtx.ipynb}, and \texttt{FourGraph\_C11m\_7vtx.ipynb}, after excluding the homomorphic images on at most $6$ vertices used in the scripts, give the corresponding upper bounds in \cref{TAB:remarks-bounds}.

The last three upper bounds are close to $\frac37=0.428571\ldots$ and suggest that the iterated bipartite construction may be the correct extremal example for $C_{\ell}^{4-}$ when $\ell\equiv1,3\pmod4$.
Determining these densities remains open.

\begin{table}[H]
\centering
\begingroup
\renewcommand{\arraystretch}{1.2}
\begin{tabular}{ccc}
\toprule
Forbidden configuration & construction lower bound & flag algebra upper bound \\
\midrule
$C_6^4$ & $1/2$ & $673753/1152000=0.584855\ldots$ \\
$C_{11}^4$ & $1/2$ & $363463/691200=0.525843\ldots$ \\
$C_{13}^4$ & $1/2$ & $13879/27648=0.501989\ldots$ \\
$C_{7}^{4-}$ & $3/7$ & $303349/691200=0.438872\ldots$ \\
$C_{9}^{4-}$ & $3/7$ & $693253/1612800=0.429844\ldots$ \\
$C_{11}^{4-}$ & $3/7$ & $49519/115200=0.429852\ldots$ \\
\bottomrule
\end{tabular}
\endgroup
\caption{Bounds obtained from the constructions described above and from the exact rounded flag algebra computations.}
\label{TAB:remarks-bounds}
\end{table}

%%%%%%%%%%%%%%%%%%%%%%%%%%%%%%%%%%%%%%%%%%%%%%
\section*{Acknowledgments}
\begin{sloppypar}
X.L. would like to thank Levente Bodn\'ar for helping us verify the slackness-related results at an early stage of this project. 
J.H. was supported by the National Key R\&D Program of China (No.~2023YFA1010202). 
X.L. was supported by the Excellent Young Talents Program (Overseas) of the National Natural Science Foundation of China. 
Y.Z. was supported by the Doctoral Student Program of the Young S\&T Talents Cultivation Project, CAST. 
\end{sloppypar}
%%%%%%%%%%%%%%%%%%%%%%%%%%%%%%%%%%%%%%%%%%%%%%
\bibliographystyle{abbrv}
\bibliography{TightCycle}
%%%%%%%%%%%%%%%%%%%%%%%%%%%%%%%%%%%%%%%%%%%%%
%%%%%%%%%%%%%%%%%%%%%%%%%%%%%%%%%%%%%%%%%%%%%%

\appendix
\titleformat{\section}[display]{\normalfont\large\bfseries}{\Large Appendix}{0.35em}{}

\section{A direct upper-bound proof for \texorpdfstring{$C_6^{4-}$}{C6 minus}}\label{APP:C6minus-direct-upper}

This appendix gives a self-contained proof of $\pi(C_6^{4-})\le1/2$ by writing out the first positive semidefinite block of the $C_6^{4-}$ certificate.
Here we include a human-readable proof produced by AI by giving it the certificate generated by the flag algebra script.
The proof converts each induced six-vertex $4$-graph into an ordinary graph by taking complements of pairs, proves a small ordinary graph inequality, and then averages the resulting six-vertex inequality.

Recall that $C_6^{4-}$ has vertex set $\{1,\ldots,6\}$ and edge set $\{1234,2345,3456,1456,1256\}$.

\begin{lemma}\label{LEM:appendix-path-chvatal}
Let $G$ be an ordinary graph on six vertices with degree sequence $d_1\le d_2\le \cdots\le d_6$.
If, for each $k=1,2,3$, either $d_k\ge k$ or $d_{7-k}\ge6-k$, then $G$ has a Hamilton path.
\end{lemma}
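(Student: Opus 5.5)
The plan is to derive the statement from Chvátal's degree-sequence criterion for Hamilton cycles, applied to the auxiliary graph $G^{+}\coloneqq G\vee\{w\}$. Here we add one new vertex $w$ adjacent to all six vertices of $G$. A Hamilton cycle in $G^{+}$ passes through $w$, and deleting $w$ leaves a Hamilton path of $G$. So it suffices to check that the degree sequence of $G^{+}$, a graph on $7$ vertices, satisfies Chvátal's condition: for every $j<7/2$, either $d'_j>j$ or $d'_{7-j}\ge 7-j$.

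The degree sequence of $G^{+}$ in nondecreasing order is $d_1+1\le\cdots\le d_6+1\le 6$, with the degree $6$ of $w$ appended at the end. Hence $d'_j=d_j+1$ for $j\le 6$ and $d'_7=6$. For $j=1,2,3$, Chvátal's condition becomes: either $d_j+1>j$, that is $d_j\ge j$, or $d_{7-j}+1\ge 7-j$, that is $d_{7-j}\ge 6-j$. These are exactly the hypotheses, so Chvátal's theorem gives a Hamilton cycle in $G^{+}$ and therefore a Hamilton path in $G$.

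The only point needing care is the index bookkeeping: placing $w$ last must preserve the sorted order, which holds because $d_i+1\le 6$. We must also confirm that Chvátal's theorem, which needs $n\ge3$, is stated with strict inequality $d_j>j$ in the first alternative and non-strict in the second. If one prefers not to cite Chvátal, the fallback is a direct case analysis on six vertices: take a longest path and use the degree conditions to extend or rotate it, Pósa-style. The join trick is cleaner, so I would use it.
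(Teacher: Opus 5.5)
Your proposal is correct and is the paper's argument: you join a universal vertex to obtain a $7$-vertex graph, observe that Chv\'atal's condition for $i=1,2,3$ is exactly the hypothesis, and delete the universal vertex from the resulting Hamilton cycle. The only difference is that the paper also proves Chv\'atal's criterion (via an edge-maximal non-Hamiltonian graph) for completeness instead of citing it.
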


\begin{proof}
We use the standard Chvatal-type Hamilton-cycle criterion in the following form: if a graph $J$ on $N$ vertices has degree sequence $a_1\le\cdots\le a_N$ and, for every $i<N/2$, either $a_i\ge i+1$ or $a_{N-i}\ge N-i$, then $J$ is Hamiltonian.
For completeness, we recall the short proof.

Suppose, for a contradiction, that $J$ satisfies the displayed degree condition but is not Hamiltonian.
Add edges until we obtain an edge-maximal non-Hamiltonian graph $J'$, and write its degree sequence as $a'_1\le\cdots\le a'_N$.
The degree condition still holds for $J'$, since all degrees have only increased.
Choose a non-edge $uv$ of $J'$ for which $d(u)+d(v)$ is as large as possible, and write $r=d(u)\le d(v)=s$.
The graph $J'+uv$ has a Hamilton cycle using $uv$, and deleting this new edge gives a Hamilton path $u=x_1,x_2,\ldots,x_N=v$ in $J'$.
If both $ux_{i+1}$ and $vx_i$ were edges of $J'$ for some $i$, then these two chords would close a Hamilton cycle in $J'$, a contradiction.
Thus the two sets $\{i: ux_{i+1}\in E(J')\}$ and $\{i: vx_i\in E(J')\}$ are disjoint subsets of $\{1,\ldots,N-1\}$, so $r+s\le N-1$.
In particular $r<N/2$, and if $r=0$, the criterion already fails for $i=1$.

Assume therefore that $r\ge1$.
By the choice of $uv$, every non-neighbour of $v$ has degree at most $r$; since $v$ has $N-1-s\ge r$ non-neighbours, we get $a'_r\le r$.
Similarly every non-neighbour of $u$ has degree at most $s$, and together with $u$ this gives $N-r$ vertices of degree at most $s$.
Hence $a'_{N-r}\le s\le N-1-r<N-r$, so the criterion fails for $i=r$, a contradiction.

Now add one universal vertex to $G$.
The resulting graph has seven vertices and degree sequence $d_1+1\le\cdots\le d_6+1\le6$.
For $N=7$, the Hamilton-cycle criterion is checked only for $i=1,2,3$, where it is exactly the hypothesis of the lemma.
Thus the augmented graph has a Hamilton cycle, and deleting the universal vertex from this cycle gives a Hamilton path in $G$.
\end{proof}

\begin{lemma}\label{LEM:appendix-P6-disjoint-pairs}
Let $G$ be an ordinary graph on six vertices with no copy of $P_6$.
If $m=e(G)$ and $M$ is the number of unordered pairs of vertex-disjoint edges of $G$, then $2M\le3m$.
\end{lemma}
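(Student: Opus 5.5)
The natural route is to reduce to a degree condition via \cref{LEM:appendix-path-chvatal}, since $G$ contains no $P_6$ means $G$ has no Hamilton path. First express $M$ in terms of $m$ and the degrees. Every unordered pair of distinct edges either shares a vertex or is vertex-disjoint, so
\[
M=\binom m2-\sum_{v}\binom{d(v)}2 .
\]
The target $2M\le 3m$ therefore becomes
\[
m(m-1)-\sum_v d(v)(d(v)-1)\le 3m,
\qquad\text{that is,}\qquad
m^2-4m+2m\le \sum_v d(v)^2 ,
\]
which simplifies to $\sum_v d(v)^2\ge m^2-2m$, using $\sum_v d(v)=2m$. So I would prove $\sum_v d(v)^2\ge m(m-2)$ for every $P_6$-free graph on six vertices.

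Next I use \cref{LEM:appendix-path-chvatal}. Since $G$ has no Hamilton path, there is some $k\in\{1,2,3\}$ with $d_k\le k-1$ and $d_{7-k}\le 5-k$. This gives three cases.
\begin{itemize}
\item[(a)] $k=1$: $d_1=0$. Then $G$ lives on five vertices with $m\le10$, and $d_6\le4$ adds no further constraint.
\item[(b)] $k=2$: $d_1\le d_2\le1$ and $d_5\le3$. The two smallest degrees are at most $1$, and at most one vertex has degree above $3$.
\item[(c)] $k=3$: $d_1\le d_2\le d_3\le 2$ and $d_4\le d_5\le 2$. So five vertices have degree at most $2$, and the sixth has degree at most $5$.
\end{itemize}

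In each case I would bound $m$ and check $\sum d^2\ge m^2-2m$, using Cauchy--Schwarz on the vertices carrying most of the degree.

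In case (a), with $n'$ nonisolated vertices, Cauchy--Schwarz gives $\sum d^2\ge (2m)^2/5=4m^2/5$. This is at least $m^2-2m$ whenever $m\le 10$, since $m^2/5\le 2m$. That settles case (a).

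In case (c), $m\le (5\cdot 2+5)/2$, so $m\le 7$. Also $\sum d^2\ge (2m)^2/6=2m^2/3$, and this is at least $m^2-2m$ iff $m\le 6$. The value $m=7$ forces $d_6=5$ and $d_1=\cdots=d_5=2$; then $\sum d^2=45\ge35$. Hence case (c) holds.

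In case (b), $2m\le 1+1+3\cdot3+5=16$, so $m\le 8$. For $m\le6$ the bound $2m^2/3$ suffices. For $m\in\{7,8\}$ I need the degree sum of the four large vertices to be at least $2m-2$. With one vertex of degree at most $5$ and three of degree at most $3$, Cauchy--Schwarz on those four gives $\sum d^2\ge (2m-2)^2/4=(m-1)^2\ge m^2-2m$. That settles case (b).

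The main obstacle is case (b) at the boundary values $m=7,8$. There I must be careful that the four large vertices do carry degree sum at least $2m-2$; this follows because $d_1+d_2\le 2$. Once that is confirmed, every case closes and the lemma follows.
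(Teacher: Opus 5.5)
Your reduction to $\sum_v d(v)^2\ge m(m-2)$ is correct, and so are cases (a) and (b). In (b), applying Cauchy--Schwarz to $d_3,\dots,d_6$, whose sum is at least $2m-2$, is a cleaner route to $(m-1)^2\ge m(m-2)$ than the paper's subcases. The gap is in case (c). When the hypothesis of \cref{LEM:appendix-path-chvatal} fails at $k=3$, what you get is $d_3\le 2$ and $d_{7-3}=d_4\le 5-3=2$. Nothing bounds $d_5,d_6$ beyond $5$. So only four degrees are at most $2$, $2m\le 8+5+5$, and $m$ can be as large as $9$, not $7$. (Even in your version, $m=7$ does not force $(2,2,2,2,2,5)$, which sums to $15$, not $14$.)

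With the correct case (c), degree counting cannot finish the proof. The bound $(2m)^2/6$ still handles $m\le 6$. Setting $s=d_1+\dots+d_4\le 8$, Cauchy--Schwarz gives $\sum_i d_i^2\ge s^2/4+(2m-s)^2/2$. This handles $m=8,9$ (giving $48$ and $66$), but for $m=7$ it only gives $34<35$. Parity ($\sum_i d_i^2\equiv\sum_i d_i\pmod 2$) raises this to $36$, except for the degree sequence $(2,2,2,2,3,3)$. For that sequence $\sum_i d_i^2=34<35=m(m-2)$, i.e.\ $2M=22>21=3m$. It is not excluded by \cref{LEM:appendix-path-chvatal}, so no argument using only degrees can close this case.

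The missing ingredient is a structural proof that every graph with degree sequence $(2,2,2,2,3,3)$ contains $P_6$. The paper argues as follows. Such a graph is connected, since otherwise it would be two triangles with all degrees $2$. Hence a longest path has at least $2\delta+1=5$ vertices. If it is $v_1\cdots v_5$ with sixth vertex $x$, then maximality and $d(x)\ge 2$ force $N(x)=\{v_2,v_4\}$. Each of the chords $v_1v_3$, $v_1v_5$, $v_3v_5$ would create a Hamilton path, so $v_1v_4$ and $v_2v_5$ must be edges. Then $d(v_2),d(v_4)\ge 4$, a contradiction. Without this step your argument does not prove the lemma.
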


\begin{proof}
Let the degrees of $G$ be $d_1,\ldots,d_6$.
For an edge $uv\in E(G)$, exactly $m-d(u)-d(v)+1$ edges are disjoint from $uv$.
Summing over all edges gives
\begin{equation}\label{EQ:appendix-disjoint-pair-count}
2M=m(m+1)-\sum_{i=1}^6 d_i^2.
\end{equation}
It is therefore enough to prove
\begin{equation}\label{EQ:appendix-degree-square}
\sum_{i=1}^6 d_i^2\ge m(m-2).
\end{equation}

Write the degree sequence in nondecreasing order.
Since $G$ has no Hamilton path, \cref{LEM:appendix-path-chvatal} implies that at least one of the following holds: \textup{(A)} $d_1=0$ and $d_6\le4$; \textup{(B)} $d_2\le1$ and $d_5\le3$; or \textup{(C)} $d_3\le2$ and $d_4\le2$.

In Case \textup{(A)}, one vertex is isolated and all other degrees are at most $4$, so $m\le\binom52=10$.
Cauchy's inequality on the five non-isolated positions gives $\sum_i d_i^2\ge(2m)^2/5\ge m(m-2)$.

In Case \textup{(B)}, we have $2m=\sum_i d_i\le1+1+3+3+3+5=16$, so $m\le8$.
If $m\le6$, then Cauchy's inequality gives $\sum_i d_i^2\ge(2m)^2/6\ge m(m-2)$.
If $m=7$, putting $s=d_1+d_2$ gives $0\le s\le2$ and
\[
\sum_i d_i^2\ge \frac{s^2}{2}+\frac{(14-s)^2}{4}\ge38>35=m(m-2).
\]
If $m=8$, the degree-sum bound is tight, so the degree sequence is $(1,1,3,3,3,5)$ and $\sum_i d_i^2=54>48=m(m-2)$.

It remains to consider Case \textup{(C)}.
Here $d_1,d_2,d_3,d_4\le2$, so $m\le9$.
The case $m\le6$ is again immediate from Cauchy's inequality.
For $m=7,8,9$, let $s=d_1+d_2+d_3+d_4$.
Since $0\le s\le8$, Cauchy's inequality gives $\sum_i d_i^2\ge s^2/4+(2m-s)^2/2$.
This is at least $48=m(m-2)$ for $m=8$ and at least $66>63=m(m-2)$ for $m=9$.
For $m=7$, it is at least $34$.
The value $34$ can occur only for degree sequence $(2,2,2,2,3,3)$; otherwise $\sum_i d_i^2\equiv \sum_i d_i\pmod2$ gives $\sum_i d_i^2\ge36>35=m(m-2)$.

It remains only to rule out this exceptional sequence.
Such a graph is connected, since otherwise the minimum degree $2$ would force two triangular components and all degrees would be $2$.
Also, $G$ contains a path on at least five vertices.
Indeed, let $w_1\cdots w_t$ be a longest path in a connected graph of minimum degree $\delta$.
All neighbours of $w_1$ and $w_t$ lie on this path.
If the path is not spanning, then the sets $N(w_1)^+=\{w_{i+1}:w_i\in N(w_1)\}$ and $N(w_t)$ must be disjoint: an intersection closes a cycle on the path vertices, and connectedness then gives either a spanning cycle or a longer path.
Hence $t-1\ge2\delta$.
Here $\delta(G)\ge2$, and therefore $G$ has a path on at least five vertices.
Let $v_1v_2v_3v_4v_5$ be a longest path in $G$, and let $x$ be the sixth vertex.
The vertex $x$ is adjacent neither to $v_1$ nor to $v_5$, and it cannot be adjacent to two consecutive vertices of the path; otherwise there would be a Hamilton path.
Since $d(x)\ge2$, we must have $N(x)=\{v_2,v_4\}$.
To avoid a Hamilton path, the chords $v_1v_3$, $v_1v_5$, and $v_3v_5$ are forbidden.
The endpoints $v_1$ and $v_5$ therefore need the chords $v_1v_4$ and $v_2v_5$, respectively, in order to have degree at least $2$.
But then $v_2$ and $v_4$ both have degree at least $4$, contradicting the degree sequence $(2,2,2,2,3,3)$.

Thus \eqref{EQ:appendix-degree-square} holds in all cases, and \eqref{EQ:appendix-disjoint-pair-count} gives $2M\le3m$.
\end{proof}

\begin{proposition}\label{PROP:appendix-C6minus-upper}
We have $\pi(C_6^{4-})\le1/2$.
\end{proposition}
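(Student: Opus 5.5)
The proof averages \cref{LEM:appendix-P6-disjoint-pairs} over all $6$-subsets of a $C_6^{4-}$-free $4$-graph and then applies Cauchy--Schwarz to pair-codegrees.

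\textbf{Step 1: translate $6$-sets into ordinary graphs.} Let $\mathcal H$ be $C_6^{4-}$-free on $n$ vertices, with $|\mathcal H|=x\binom n4$. For each $S\in\binom{V}{6}$ define the ordinary graph $G_S$ on $S$: a pair $p\subseteq S$ is an edge of $G_S$ if and only if $S\setminus p\in\mathcal H$. The complements in $\{1,\ldots,6\}$ of the five edges $1234,2345,3456,1456,1256$ of $C_6^{4-}$ are $56,16,12,23,34$. These form the path $5\,6\,1\,2\,3\,4$, a Hamilton path $P_6$. Hence $\mathcal H[S]$ contains $C_6^{4-}$ exactly when $G_S$ contains a $P_6$. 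So every $G_S$ is $P_6$-free, and \cref{LEM:appendix-P6-disjoint-pairs} gives $2M_S\le 3m_S$. Here $m_S=|\mathcal H[S]|$, and $M_S$ counts pairs of disjoint edges of $G_S$. Two $4$-subsets of $S$ have disjoint complements exactly when they meet in exactly two vertices, so $M_S$ is the number of unordered pairs of edges of $\mathcal H[S]$ meeting in exactly two vertices.

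\textbf{Step 2: sum over all $6$-sets.} Each edge lies in $\binom{n-4}{2}$ sets $S$, so $\sum_S m_S=|\mathcal H|\binom{n-4}2$. A pair of edges meeting in exactly two vertices has union of size $6$, so it is counted exactly once. Writing $\Pi_2$ for the number of such pairs, we get
\[
2\Pi_2\le 3|\mathcal H|\tbinom{n-4}{2}.
\]

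\textbf{Step 3: lower-bound $\Pi_2$ by codegrees.} Each pair counted in $\Pi_2$ has a unique common pair $P$. For a pair $P$ with codegree $d(P)$, the edges containing $P$ correspond to a graph with $d(P)$ edges on $n-2$ vertices. Its number of unordered pairs of disjoint edges is at least $\binom{d(P)}2-d(P)\,n$, since each edge meets at most $2n$ others. Summing over $P$ and using $\sum_P d(P)=6|\mathcal H|$ together with Cauchy--Schwarz,
\[
\Pi_2\ge \frac12\sum_P d(P)^2-O(n^5)\ge \frac{18|\mathcal H|^2}{\binom n2}-O(n^5).
\]

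\textbf{Step 4: compare.} Substituting $|\mathcal H|\approx xn^4/24$ turns the left side $2\Pi_2$ into roughly $x^2n^6/16$, while the right side $3|\mathcal H|\binom{n-4}2$ is roughly $xn^6/32$, up to $O(n^5)$ errors. This gives $x\le \tfrac12+O(1/n)$, and letting $n\to\infty$ yields $\pi(C_6^{4-})\le 1/2$.

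\textbf{Main obstacle.} The substantive input is the $P_6$-free inequality $2M\le 3m$, which is already supplied by \cref{LEM:appendix-P6-disjoint-pairs}. The step that needs care is the bookkeeping: checking that the complement correspondence sends $C_6^{4-}$ exactly to a Hamilton path, and that pairs meeting in exactly two vertices are counted once on each side. The error terms are routine.
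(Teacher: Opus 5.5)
Your proof is correct and follows essentially the paper's route: the same complement-pair graphs $G_S$, the same averaged inequality $2M(S)\le 3m(S)$ from \cref{LEM:appendix-P6-disjoint-pairs}, and the same regrouping of $\sum_S M(S)$ according to the common pair of the two $4$-edges. The only difference is the final quadratic step, and it does not affect validity.

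\begin{itemize}
\item The paper lower-bounds this sum using the per-pair squares $\bigl(\binom{n-2}{2}-2d(R)\bigr)^2\ge0$. This gives a linear, flag-algebra-style certificate, and it is the tangent-line relaxation of your Cauchy--Schwarz bound at codegree density $\frac12$.
\item You instead apply Cauchy--Schwarz to $\sum_R d(R)^2$ directly.
\end{itemize}

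One small arithmetic slip: in Step~4 the leading terms should be $x^2n^6/8$ and $xn^6/16$. Both of yours are off by the same factor of $2$, so the conclusion $x\le\frac12+O(1/n)$ is unaffected.
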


\begin{proof}
Let $H$ be a $C_6^{4-}$-free $4$-graph on vertex set $V$, where $|V|=n$.
For each six-set $S\in\binom V6$, define the complement-pair graph $G_S$ on vertex set $S$ by declaring $xy\in E(G_S)$ if and only if $S\setminus\{x,y\}\in E(H)$.
Let $m(S)=e(G_S)=e(H[S])$, and let $M(S)$ be the number of unordered pairs of vertex-disjoint edges of $G_S$.

The complements of the five edges of $C_6^{4-}$ are $56,16,12,23,34$, which form the ordinary path $5-6-1-2-3-4$.
Thus a copy of $P_6$ in $G_S$ would give a copy of $C_6^{4-}$ in $H[S]$.
Since $H$ is $C_6^{4-}$-free, each $G_S$ is $P_6$-free, and \cref{LEM:appendix-P6-disjoint-pairs} gives $2M(S)\le3m(S)$ for every six-set $S$.

For a pair $p\in\binom S2$, set $\xi_S(p)=-1$ if $p\in E(G_S)$ and $\xi_S(p)=1$ otherwise.
Define
\[
Q(S)=\frac1{90}
\sum_{\substack{\{p,q\}\subseteq\binom S2\\ p\cap q=\varnothing}}
\xi_S(p)\xi_S(q),
\]
where the sum is over unordered pairs of disjoint pairs in $S$.
There are $45$ such pair-pairs.
If $m=m(S)$ and $M=M(S)$, then $6m-2M$ of them contain exactly one edge of $G_S$, and hence $Q(S)=(45-12m(S)+4M(S))/90$.
Therefore
\[
\frac12-\frac{m(S)}{15}-Q(S)
=\frac{3m(S)-2M(S)}{45}\ge0.
\]
Equivalently, every six-set satisfies $m(S)/15\le1/2-Q(S)$.

Averaging this local inequality over all six-sets gives
\begin{equation}\label{EQ:appendix-density-average}
\frac{e(H)}{\binom n4}
\le
\frac12-\mathbb E_{S\in\binom V6} Q(S),
\end{equation}
because every edge of $H$ is contained in exactly $\binom{n-4}{2}$ six-sets.
It remains to show that $\mathbb E_SQ(S)\ge -O(1/n)$.

Fix a pair $R\in\binom V2$ and, for $P\in\binom{V\setminus R}{2}$, put $\eta_R(P)=1-2\mathbf 1_{R\cup P\in E(H)}$.
Then
\[
\mathbb E_{S\in\binom V6}Q(S)
=\frac{1}{90\binom n6}
\sum_{R\in\binom V2}
\sum_{\substack{\{P,Q\}\subseteq\binom{V\setminus R}{2}\\ P\cap Q=\varnothing}}
\eta_R(P)\eta_R(Q).
\]
For fixed $R$, write $X_R=\binom{V\setminus R}{2}$.
If the disjointness condition is ignored, then
\[
\sum_{\substack{\{P,Q\}\subseteq X_R\\ P\ne Q}}
\eta_R(P)\eta_R(Q)
=\frac12\left(\left(\sum_{P\in X_R}\eta_R(P)\right)^2-|X_R|\right)
\ge -\frac{|X_R|}{2}.
\]
There are only $O(n^3)$ unordered pairs $\{P,Q\}\subseteq X_R$ with $P\cap Q\ne\varnothing$, and each summand has absolute value $1$.
Thus the inner sum with the disjointness condition is at least $-O(n^3)$, uniformly in $R$.
Since there are $O(n^2)$ choices of $R$ and $\binom n6=\Theta(n^6)$, we get $\mathbb E_SQ(S)\ge -O(1/n)$.
Together with \eqref{EQ:appendix-density-average}, this gives $e(H)/\binom n4\le1/2+O(1/n)$.
Taking the limit over $n$ proves the proposition.
\end{proof}
\end{document}